\theoremstyle{plain}
\newtheorem{theorem}{Theorem}[section]
\newtheorem{thmintro}{Theorem}[section]
\newtheorem*{theorem*}{Theorem}
\newtheorem*{conjecture*}{Conjecture}
\newtheorem{lemma}[theorem]{Lemma}
\newtheorem{corollary}[theorem]{Corollary}
\newtheorem{proposition}[theorem]{Proposition}
\newtheorem{questionintro}[thmintro]{Question}
\theoremstyle{definition}
\newtheorem{definition}[theorem]{Definition}
\newtheorem{remark}[theorem]{Remark}
\newtheorem{example}[theorem]{Example}
\definecolor{darkblue}{rgb}{0,0,0.7} 
\newcommand{\darkblue}{\color{darkblue}} 
\newcommand{\defn}[1]{\emph{\darkblue #1}} 
\newcommand{\ball}{\mathcal{B}}
\newcommand{\sphere}{\mathcal{S}}
\newcommand{\maxlength}{\omega}
\newcommand{\ssm}{\smallsetminus}
\newcommand{\eqdef}{:=}
\newcommand{\N}{\mathbb{N}}
\newcommand{\Z}{\mathbb{Z}}
\newcommand{\join}{\ast}
\DeclareMathOperator{\length}{length}
\DeclareMathOperator{\ops}{ops} 
\DeclareMathOperator{\vdist}{vdist} 
\DeclareMathOperator{\lk}{lk} 
\DeclareMathOperator{\st}{st} 
\DeclareMathOperator{\del}{del} 
\DeclareMathOperator{\stsubd}{stell} 
\title{Hirsch polytopes with exponentially long combinatorial~segments}
\author[J.-P.~Labb\'e]{Jean-Philippe Labb\'e$^{ \clubsuit}$}
\address[J.-P. Labb\'e]{Einstein Institute of Mathematics, Hebrew University of Jerusalem, Jerusalem 91904, Israel}
\email{labbe@math.fu-berlin.de}
\urladdr{http://www.math.huji.ac.il/~labbe}
\thanks{$^{ \clubsuit}$With the support of a FQRNT post-doctoral fellowship and a post-doctoral ISF grant (805/11)}
\author[T.~Manneville]{Thibault Manneville$^{ \diamondsuit}$}
\address[T.~Manneville]{LIX-UMR 7161 {\'E}cole Polytechnique, 91128 Palaiseau Cedex, France}
\email{thibault.manneville@lix.polytechnique.fr}
\urladdr{http://www.lix.polytechnique.fr/~manneville/}
\thanks{$^{ \diamondsuit}$With the support of an {\'E}cole Polytechnique Gaspard Monge doctoral grant and with partial support of French ANR grant EGOS (12 JS02 002 01)}
\author[F.~Santos]{Francisco Santos$^{ \heartsuit}$}
\address[F.~Santos]{Departamento de Matem\'aticas, Estad\'{\i}stica y Computaci\'on, Universidad de Cantabria, 39012 Santander, Spain}
\email{francisco.santos@unican.es}
\urladdr{http://personales.unican.es/santosf}
\thanks{$^{ \heartsuit}$With the support of the Spanish Ministry of Science through grants MTM2011-22792 and MTM2014-54207-P}
\keywords{Banner complex \and Flag complex \and Diameter \and Normal complex \and Simplicial complex}
\subjclass[2010]{Primary 52B05; Secondary 90C60}
\begin{document}

\begin{abstract}
In their paper proving the Hirsch bound for flag normal simplicial complexes (Math. Oper.~Res.~2014)
Adiprasito and Benedetti define the notion of~\emph{combinatorial segment}.
The study of the maximal length of these objects provides the upper bound~$O(n2^d)$ for the diameter of any normal pure simplicial complex of dimension~$d$ with~$n$ vertices, and the Hirsch bound $n-d$ if the complexes are, moreover, flag.
In the present article, we propose a formulation of combinatorial segments which is equivalent but more local, by introducing the notions of monotonicity and conservativeness of dual paths in pure simplicial complexes.
We use this definition to investigate further properties of combinatorial segments.
Besides recovering the two stated bounds, we show a refined bound for banner complexes, and study the behavior of the maximal length of combinatorial segments with respect to two usual operations, namely join and one-point suspension.
Finally, we show the limitations of combinatorial segments by constructing pure normal simplicial complexes in which all combinatorial segments between two particular facets
achieve the length $\Omega(n2^{d})$. This includes vertex-decomposable---therefore Hirsch---polytopes.
\end{abstract}

\maketitle


\section{Introduction}
One of the main open questions in polyhedral combinatorics is the so-called polynomial Hirsch Conjecture:
\begin{quote}
	\emph{Is there an upper bound for the (combinatorial) diameter of the graph of every $d$-polyhedron with $n$ facets that is polynomial in $n$ and $d$?}
\end{quote}
Remember that it was conjectured by Hirsch in 1957 that $n-d$ is a valid upper bound. We say a polytope or polyhedron is \emph{Hirsch} if it satisfies this bound. 
Although the Hirsch Conjecture was disproved by Klee and Walkup in the general case~\cite{KleeWalkup} and by Santos in the bounded case~\cite{Santos-hirsch}, the known counterexamples exceed the bound only by a small fraction (25\% in the unbounded case, 5\% in the bounded case, see \cite{Santos-hirsch,MatschkeSantosWeibel}).
In contrast, the best upper bounds known are not polynomial. If we denote by $H(d,n)$ the maximum diameter of $d$-dimensional polytopes with $n$ facets, these bounds are:
\begin{equation}
\label{eqn:two_bounds}
H(d,n) \le \frac{2}{3}2^{d-3} n,
\qquad
H(d,n) \le (n-d)^{\log_2  d }.
\end{equation}

The first bound is \emph{linear in fixed dimension}. Except for a constant factor, which was an improvement by Barnette~\cite{barnette_upperbound_1974}, this bound was first proved by Larman~\cite{larman_paths_1970}. The second, \emph{quasi-polynomial} bound, was first proved by Kalai and Kleitman~\cite{kalai_quasipolynomial_1992}, although the version we state is an improvement by Todd~\cite{todd_improved_KK}.

An approach to the question that has been attempted since long, starting with the introduction of \defn{abstract polytopes} by Adler and Dantzig~\cite{AdlerDantzig}, is to generalize it to the setting of simplicial complexes. It is known that $H(d,n)$ is attained at a simple polytope, which is topologically dual to a simplicial sphere of dimension $d-1$. Thus, $H(d,n)$ is bounded by the maximum diameter of dual graphs of all simplicial $(d-1)$-spheres with $n$ vertices, and generalizing the question to arbitrary pure simplicial complexes may help in finding the right proof strategy. 
Here and in the rest of the paper, a \emph{pure simplicial complex} of dimension $d-1$ is a subset~$C$ of $\binom{[n]}{d}$.
The elements of $C$ are called \emph{facets} and two facets are adjacent if they differ in a single element. 
This defines a \emph{facet-adjacency} graph (or \emph{dual graph}) of $C$. 
The (dual) diameter of $C$ is the combinatorial diameter of this graph. 

However, if one generalizes the question too much then exponential diameters arise:

\begin{thmintro}[Santos~\cite{Santos-progress}]
The maximum diameter of pure simplicial $(d-1)$-complexes with $n$ vertices grows as $n^{\Theta(d)}$.
\end{thmintro}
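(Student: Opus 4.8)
The statement has two halves of very unequal difficulty. For the upper bound, recall that a connected pure simplicial complex $C\subseteq\binom{[n]}{d}$ has at most $\binom{n}{d}\le n^{d}$ facets, and that any connected graph has diameter at most its number of vertices minus one; hence the dual diameter of $C$ is at most $n^{d}=n^{O(d)}$ (disconnected complexes being excluded, or one restricts to a single component). So all the content lies in the lower bound: for every $d$, and every $n$ large enough in terms of $d$, one must exhibit some $C\subseteq\binom{[n]}{d}$ whose dual graph has diameter $n^{\Omega(d)}$.

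The reduction I would use is that the dual graph of $C\subseteq\binom{[n]}{d}$ is, by definition, the subgraph of the Johnson graph $J(n,d)$ --- vertex set $\binom{[n]}{d}$, with $A\sim B$ iff $|A\cap B|=d-1$ --- induced on $C$. Therefore, if $J(n,d)$ contains an \emph{induced} path on $L+1$ vertices, taking $C$ to be that vertex set produces a pure $(d-1)$-complex on at most $n$ vertices whose dual graph is a path, hence of diameter exactly $L$. It thus suffices to construct long induced paths in Johnson graphs.

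For this, assume first $d=2k$, partition $[n]$ into blocks $B_{1},\dots,B_{k}$ of size $r+1\approx n/k$, and send a vector $(a_{1},\dots,a_{k})\in\{1,\dots,r\}^{k}$ to the $d$-set picking, in each $B_{i}$, the two consecutive elements in positions $a_{i}$ and $a_{i}+1$. A short check shows the images of two vectors are $J(n,d)$-adjacent precisely when the vectors differ by $\pm 1$ in exactly one coordinate; that is, this embeds the grid graph $[r]^{k}$ as an \emph{induced} subgraph of $J(n,d)$. It remains to find a long induced path --- a ``snake'' --- in $[r]^{k}$, which I would do by induction on $k$: given an induced $\mathbf{u}\!-\!\mathbf{v}$ path in $[r]^{k-1}$, run copies of it in the layers $1,3,5,\dots$ of $[r]^{k}$ (spacing the used layers by $2$, so non-consecutive used layers are never adjacent), alternating its direction and splicing consecutive layers by a single vertical hop through the unused intermediate layer. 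This yields an induced path of length at least $(r/3)^{k}$, say. The one point demanding genuine care --- the only real obstacle in the argument --- is exactly the chord-freeness of this splicing, which the layer-spacing is designed to guarantee. Odd $d=2k+1$ is then handled by coning the even construction over one fixed extra vertex (raising dimension and vertex count by one without changing the dual graph). With $k=\lfloor d/2\rfloor$ and $r\approx n/k$ this gives diameter at least $(cn/d)^{\lfloor d/2\rfloor}=n^{\Omega(d)}$ once $n$ is large enough in terms of $d$, which together with the upper bound proves the claimed growth rate.
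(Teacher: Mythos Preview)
Note first that the paper itself does not prove this statement: it is quoted from Santos's survey as background, with no argument supplied. So there is no ``paper's own proof'' to compare against here.

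On its own merits your argument is correct. The upper bound is immediate. For the lower bound, identifying dual graphs of pure $(d-1)$-complexes with induced subgraphs of the Johnson graph $J(n,d)$ is exactly the right framing, and your embedding of the grid $[r]^k$ (with $k=\lfloor d/2\rfloor$ and $r+1\approx n/k$) via consecutive pairs in $k$ disjoint blocks is indeed an induced embedding: one has $|\phi(a)\cap\phi(a')|=\sum_i\bigl|\{a_i,a_i{+}1\}\cap\{a'_i,a'_i{+}1\}\bigr|$, which equals $2k-1$ precisely when $a$ and $a'$ differ by $\pm1$ in a single coordinate. The layered snake in $[r]^k$ also checks out: each even layer contributes a single snake vertex, located at one of the two fixed endpoints $u,v$ of $P$, so it is adjacent only to the matching endpoint vertices in the neighbouring odd layers; interior vertices of the odd-layer copies of $P$ are at neither $u$ nor $v$ and hence see nothing in the even layers, and layers at distance $\ge2$ are nonadjacent. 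This gives snake length at least $(r/2)^k$, and the cone handles odd $d$. Altogether you obtain diameter at least $(cn/d)^{\lfloor d/2\rfloor}=n^{\Omega(d)}$ for $n$ large relative to $d$, which together with the trivial upper bound yields $n^{\Theta(d)}$.

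For what it is worth, Santos's original construction in the cited survey is organised differently --- via an iterated product-type operation on complexes that essentially multiplies diameters while adding dimensions and vertex counts, starting from a one-dimensional path --- and attains a somewhat larger exponent than your $d/2$. Either route suffices for the $n^{\Theta(d)}$ statement; yours is self-contained and arguably more transparent, while Santos's gives the sharper constant in the exponent.
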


Hence, some condition is needed if one wants to have (hopes of) a polynomial upper bound. The standard consensus is to look only at \emph{normal} complexes. That is, complexes in which for every $\sigma\in [n]$ the following subcomplex (called the \emph{star} of $\sigma$) has a connected dual graph:
\[
\st_C(\sigma) := \{ F\in C : \sigma\subset F\}.
\]
Equivalently, $C$ is normal if the \emph{link} of every face of codimension at least two is connected~\cite[Lemma 3.1.2]{IzmestievJoswig}, or if $C$ can be obtained from a family of disjoint $d$-simplices by gluing only along codimension-one faces of them.
The adjective ``normal'' for these complexes is quite established if they are pseudomanifolds,
in which case the condition is also equivalent to  the $n$-th local homology group $H_n(|C|, |C|\ssm \{p\})$ being isomorphic to $\Z$ for every point $p$ (see e.g.~\cite[Sect.~4]{GoreskyMacpherson}). 
Without the pseudomanifold condition, the name has been used in~\cite{AdiprasitoBenedetti, Santos-progress}. In the context of diameter bounds, pure simplicial complexes with this property have been called \emph{locally connected}~\cite{AdlerDantzig} and \emph{ultraconnected}~\cite{Kalai}. Adler and Dantzig~\cite{AdlerDantzig} call normal pseudomanifolds without boundary \emph{abstract polytopes}.
The property arises also in the context of unfoldings of simplicial complexes~\cite{Mohar,IzmestievJoswig} under the name \emph{locally strongly connected}.

Evidence that normality is the right condition to consider is that the two bounds stated in~\eqref{eqn:two_bounds} for $H(d,n)$ are still valid, with minor adjustments, for the diameters of all normal complexes. This was already known to Kalai and Kleitman~\cite{kalai_quasipolynomial_1992}, and it was recently highlighted by the work of Eisenbrand et al.~\cite{eisenbrand_diameter_2010}, who considered an even more abstract setting and proved the two bounds in this setting. Their work, in fact, suggests the following conjecture, much stronger than the polynomial Hirsch Conjecture:

\begin{conjecture*}[{H\"ahnle \cite[Conjecture~7.0.5]{hahnle_diplom_2008}}]
\label{conj:haehnle}
The diameter of every normal $(d-1)$-complex with $n$ vertices is at most $(n-1)d$.
\end{conjecture*}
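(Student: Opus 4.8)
The plan is to attempt a proof by double induction: on the dimension $d-1$, and, for fixed dimension, on the number of vertices $n$. The base case $d=1$ is immediate, since a normal $0$-complex has complete dual graph and hence diameter at most $1 \le n-1$. For the inductive step, fix a normal $(d-1)$-complex $C$ on vertex set $[n]$ and two facets $F_0,F_1$; we look for a dual path of length at most $(n-1)d$ between them. The two structural inputs the induction can use are: (i) for every vertex $v$, the link $\lk_C(v)$ is again a normal complex, of dimension $d-2$ and with at most $n-1$ vertices, so by the inductive hypothesis its dual graph has diameter at most $(n-2)(d-1)$; and (ii) because $C$ is normal, each star $\st_C(v)$ is dual-connected, and two facets through $v$ can be joined inside $\st_C(v)$ at a cost of at most $\operatorname{diam}(\lk_C(v))$.

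Concretely, I would run a breadth-first search from $F_0$ and record, for each layer $\mathcal{L}_i$, the set $V_i\subseteq[n]$ of vertices used by facets at dual distance at most $i$. The sequence $(V_i)_i$ is non-decreasing, starts with $|V_0|=d$, and stabilizes at a set of size at most $n$; the quantity to control is the number of BFS layers that can occur between two consecutive strict increases of $V_i$ — call such a run a \emph{plateau}. There are at most $n-d$ plateaus, so if one could bound the length of each plateau by $d$ (up to an additive constant) the conjecture would follow. This is exactly where normality, and the monotonicity and conservativeness properties of dual paths developed in this article, should enter: one wants a \emph{conservativeness} statement asserting that an optimal dual path, once it has ``seen'' a vertex set $W$, can be rerouted so as to stay inside a controlled portion of the subcomplex supported on $W$ — ideally inside a single star $\st_C(v)$, and moreover inside a subpath of that star whose length is linear in $d$ rather than in $n$.

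The main obstacle — and the reason I do not expect this plan to close the problem — is precisely the plateau bound. The star-and-link recursion only lets one bound a plateau by the diameter of a link, i.e.\ $O(nd)$ by the inductive hypothesis rather than $O(d)$, which yields an overall diameter of order $n^2 d$; and unwinding such a recursion honestly across all dimensions reproduces the $\tfrac23 2^{d-3}n$ and $(n-d)^{\log_2 d}$ bounds, not $(n-1)d$. Sharpening the plateau step is equivalent in difficulty to the polynomial Hirsch conjecture itself, and the $\Omega(n2^d)$ lower bounds on the maximal length of combinatorial segments constructed later in this paper show that the combinatorial-segment machinery alone cannot provide the required saving. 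Realistically, therefore, this proposal attains only the $O(n2^d)$ diameter bound for normal complexes; reaching the $(n-1)d$ bound of H\"ahnle's conjecture would require a genuinely new idea for bounding plateaus, presumably a much stronger conservativeness property than any that follows from normality as used here.
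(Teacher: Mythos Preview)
The statement you are attempting is H\"ahnle's \emph{conjecture}; the paper states it as an open problem and offers no proof, so there is no ``paper's own proof'' to compare your proposal against. You seem aware of this: your write-up is less a proof than an honest outline of a natural inductive strategy together with a diagnosis of why it stalls.

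Your diagnosis is accurate. The BFS-plateau argument you sketch is essentially the Eisenbrand--H\"ahnle--Razborov--Rothvo{\ss} framework, and the crux is exactly the plateau bound: with only normality and the link recursion at hand, a plateau costs the diameter of a link, not $O(d)$, and unwinding the recursion gives back the Larman--Barnette $O(n2^{d})$ bound (or, via a different bookkeeping, the Kalai--Kleitman quasipolynomial bound). You correctly note that the combinatorial-segment machinery of this paper cannot supply the missing saving, since Section~\ref{sec:lowerbound} exhibits Hirsch polytopes in which \emph{every} monotone conservative path between two specified facets has length $\Omega(n2^{d})$. So the ``stronger conservativeness property'' you gesture at would have to come from somewhere else entirely.

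In short: there is a genuine gap, you have named it yourself, and it is the gap. What you have written is a reasonable heuristic discussion of why the conjecture is hard, not a proof; it should be presented as such rather than as a proof proposal.
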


Normality also plays a role in Adiprasito and Benedetti's proof of the (original) Hirsch bound for \emph{flag polytopes}~\cite{AdiprasitoBenedetti}; indeed, what they prove is the Hirsch bound for the diameter of every flag, normal simplicial complex. 
In this paper we revisit one of the two proofs of this result given by Adiprasito and Benedetti and study what it implies for normal complexes that are not flag. 

Let us be more precise.  Adiprasito--Benedetti introduce paths of certain types in dual graphs of pure complexes, that they call \emph{combinatorial segments}. They then prove that:
\begin{itemize}
\item Between every pair of facets in a normal (not necessarily flag) complex there is always a combinatorial segment (see Corollary~\ref{coro:path-existence-2}).
\item Combinatorial segments in flag normal complexes are nonrevisiting, hence they satisfy the Hirsch bound (see Section~\ref{subsection:nonrevisiting}).
\end{itemize}
Hereafter, we recast these proofs and the concept of combinatorial segment.

In Section~\ref{sec:combinatorialsegments} we introduce the notion of \emph{monotone conservative} paths by extracting the key properties of Adiprasito and Benedetti's combinatorial segments.
Although the two concepts are equivalent (see Theorem~\ref{thm:equivalenceDefinitions}) our formulation is local: a path is \emph{monotone and conservative} if each step from a facet $F_i$ to the next facet $F_{i+1}$ satisfies certain conditions. The original definition goes by posing global conditions on the path via a double recursion: on the vertex-distance between $F_1$ and $F_2$ and on the dimension of $C$. Here, we call \emph{vertex-distance} between two subsets of vertices of $C$ the distance between them in the $1$-skeleton  (the``usual'' graph, whose vertices and edges are the $0$-faces and $1$-faces) of $C$.
We achieve this local, yet equivalent, definition by considering the vertex set of each facet along the path to be ordered, so that the ordering encodes the recursions.

More precisely, we say that the order on~$F_1=\{x_1,\dots,x_d\}$ is~\emph{admissible} with respect to~a target set $S$ if the first vertex~$x_1$ in this order realizes the vertex-distance between~$F_1$ and~$S$, and if the rest of $F_1$, which is itself a facet in the link of~$x_1$ in the complex, is admissibly ordered with respect to a certain set $S'$ of neighbors of $x_1$, derived from $S$. 
We are interested primarily in the case where $S$ is the target facet $F_2$ of our path, but the lack of control on $S'$ forces us to consider more general cases for $S$.
We also define the~\emph{vector of distances} from $F_1$ to $S$, whose coordinates correspond to the vertices of~$F_1$ in their given order.
Its first coordinate is the vertex-distance of~$x_1$ to~$S$ in $C$ and the rest is the vector of distances from $F_1\ssm\{x_1\}$ to $S'$ in the link of~$x_1$.

Now for a step~$\Gamma=[F_1,F_2]$ in a dual path in~$C$, consisting of two facets admissibly ordered with respect to some set~$S$, we concentrate on the first vertices of their orders. One would expect that~$F_2$ will be ``closer'' from~$S$ than~$F_1$ if its first vertex is at smaller vertex-distance from~$S$ than the first vertex of~$F_1$ and that this goes through via the recursion.
This suggest to focus on paths whose steps are all~\emph{monotone}, i.~e., such that the vector of distances lexicographically decreases at each step.
We also require some additional ``control'' property that we call~\emph{conservativeness} and which can informally be expressed as: the step from $F_1$ to $F_2$ is conservative if $F_2$ tries to keep the order established in $F_1$ as much as admissibility allows.

In Section~\ref{sec:upperbounds} we rephrase the proof of the Hirsch bound for flag normal complexes in the language of monotone conservative paths. 
We also include a proof of the Larman bound for normal complexes in this language, and we generalize the bound to the so-called banner complexes:

\begin{thmintro}[Adiprasito--Benedetti~\cite{AdiprasitoBenedetti}, see Theorem~\ref{thm:BarnetteLarman} and Corollary~\ref{coro:hirsch-for-flag}]
\label{thm:BarnetteLarman-intro}
The length of any monotone conservative path in a~$(d-1)$-dimensional pure normal complex~$C$ on~$n$ vertices ($d\ge2$) is at most~$n2^{d-2}$.
Moreover, if~$C$ is a pseudomanifold, the same statement holds with the bound~$n2^{d-3}$; and if~$C$ is flag, the same statement holds with the Hirsch bound~$n-d$.
\end{thmintro}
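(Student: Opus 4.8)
All three statements are proven by induction on the dimension $d$, exploiting the local nature of monotonicity and conservativeness. Because the derived target $S'$ produced by a step is only loosely controlled, the induction is set up for an \emph{arbitrary} target set $S$, not merely for $S$ a facet. Fix a monotone conservative path $\Gamma=(F_0,\dots,F_m)$ admissibly ordered with respect to $S$, and write each facet in its admissible order $F_i=(x^i_1,\dots,x^i_d)$. The quantities that drive the recursion are the \emph{active vertex} $x^i_1$ and the \emph{active distance} $a_i:=\vdist_C(x^i_1,S)$, which is by definition the first coordinate of the vector of distances from $F_i$ to $S$. Monotonicity forces these vectors to decrease lexicographically along $\Gamma$, so the sequence $(a_i)_i$ is non-increasing; hence the facets sharing a fixed active distance form a contiguous subpath of $\Gamma$, and inside each such \emph{phase} we further distinguish the maximal \emph{runs} on which the active vertex equals a fixed value $v$.

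The core of the inductive step is that each run behaves like a path one dimension down: if $(F_p,\dots,F_q)$ is a run with active vertex $v$, then $(F_p\ssm v,\dots,F_q\ssm v)$ is a monotone conservative path in $\lk_C(v)$ with respect to the derived target $S'$. Indeed the vector of distances from $F_i$ to $S$ equals $(a_i,\text{(vector of distances from } F_i\ssm v \text{ to } S' \text{ in } \lk_C(v)))$, so a lexicographic decrease with constant first coordinate is exactly a lexicographic decrease of the tails; admissibility of $F_i$ with respect to $S$ unpacks to admissibility of $F_i\ssm v$ with respect to $S'$; and conservativeness is inherited coordinatewise. Since the link of a vertex of a normal complex is again a pure normal complex, now of dimension $d-2$ and on fewer than $n$ vertices, the induction hypothesis bounds the length of each run. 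The base case $d=2$ is handled directly.

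It remains to account for the transitions between consecutive runs, both within a phase and between phases. Using monotonicity of the full distance vectors together with conservativeness, one charges the runs to vertices of $C$ so that their total number is linear in $n$ and the sum of the induction-hypothesis bounds over all runs telescopes; the outcome is a recursion of the shape $\maxlength(d,n)\le 2\,\maxlength(d-1,n-1)+O(n)$, whose unrolling is a geometric series with ratio $2$ and $d-2$ summands, giving $n2^{d-2}$ once the constants are tracked. For the pseudomanifold refinement one feeds in the extra rigidity that every ridge lies in exactly two facets---together with the fact that vertex-links of normal pseudomanifolds are again normal pseudomanifolds---which lets the recursion bottom out one dimension earlier and upgrades $2^{d-2}$ to $2^{d-3}$. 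For the flag refinement the route is different: one shows directly that every monotone conservative path in a \emph{flag} normal complex is \emph{nonrevisiting}, the flag hypothesis being precisely what keeps each derived target $S'$ close enough to $S$ that once the active vertex abandons a value $v$ no later facet contains $v$, with conservativeness propagating this from the first coordinate to all of them. A nonrevisiting path of length $\ell$ meets at least $d+\ell$ vertices, since every step must introduce a previously unseen vertex, whence $\ell\le n-d$.

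The main obstacle is the transition count for the plain and pseudomanifold bounds: checking that the runs genuinely pass to the links with the correct derived target is routine, but designing the charging precisely enough to reach the exact constants $2^{d-2}$ and $2^{d-3}$---rather than a cruder power of two---is delicate, since the active vertex can in principle revisit a value within a phase. For the flag bound the sensitive point is instead establishing the nonrevisiting property itself, where the hypothesis that every minimal non-face is an edge must genuinely enter the argument rather than be inherited through the recursion.
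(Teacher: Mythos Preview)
Your overall architecture matches the paper's: induct on $d$, split $\Gamma$ according to its anchors, descend to the link of each anchor and apply the induction hypothesis there, and treat the flag case separately by proving nonrevisiting. But the step you flag as ``delicate'' is exactly the content of the proof, and the recursion you write down is not the one that works.

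First, conservativeness gives you more than you use. Since $F_{i-1}\ssm F_i$ is always the \emph{last} vertex of $F_{i-1}$, the current anchor survives into $F_i$; and since the new admissible ordering of $F_i$ maximizes index, the anchor changes only when $F_i$ acquires a vertex strictly closer to $S$. Hence the anchor distances are \emph{strictly} decreasing at every change, your ``phases'' and ``runs'' coincide, and the worry about revisiting an anchor value within a phase is empty. There is exactly one run per anchor $x_1,\dots,x_k$, with $\vdist_C(x_1,S)>\cdots>\vdist_C(x_k,S)$.

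Second, the recursion is not $\maxlength(d,n)\le 2\,\maxlength(d-1,n-1)+O(n)$: there can be arbitrarily many runs, and nothing telescopes. Writing $n_j$ for the number of vertices (other than $x_j$) appearing in the $j$-th run, the induction hypothesis gives $N\le \sum_j n_j\,2^{d-3}$. The key observation---which your sketch never states---is that any vertex $w$ occurring in the $j$-th run satisfies $\vdist_C(w,S)\in\{\vdist_C(x_j,S),\,\vdist_C(x_j,S)+1\}$, by admissibility of the ordering together with the fact that $w$ and $x_j$ lie in a common facet. Since the anchor distances are strictly decreasing integers, each vertex of $C$ is counted in at most two of the $n_j$'s, so $\sum_j n_j\le 2n$ and $N\le n\,2^{d-2}$ with no additive term. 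The pseudomanifold improvement is just a shifted base case: a $1$-dimensional normal pseudomanifold without boundary is a cycle, where monotone conservative paths have length at most $n/2$. For the flag case your outline is correct in spirit, but the nonrevisiting argument hinges on an auxiliary lemma (the paper's Lemma~\ref{lem:containment_z}) controlling a vertex that is a neighbor of the \emph{next} anchor across a change of anchor; this is where flagness is actually invoked, and it is not something that falls out of ``$S'$ staying close to $S$''.
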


\begin{thmintro}[Novik 2013, unpublished, see Theorem~\ref{thm:upperBound}]
\label{thm:upperBound-intro}
Let $k\ge2$.
If~$C$ is a $k$-banner  pure normal complex on~$n$ vertices, then between every distinct facets $F_1$ and $F_2$ of $C$, there is a monotone conservative path of length bounded by~$n2^{k-2}$.
\end{thmintro}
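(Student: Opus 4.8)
The plan is to rerun the recursion that underlies the general bound of Theorem~\ref{thm:BarnetteLarman}, but to carry the banner parameter along and to stop descending as soon as it drops to~$2$. We may assume $d>k$, since otherwise $2^{d-2}\le 2^{k-2}$ and, because Corollary~\ref{coro:path-existence-2} supplies a monotone conservative path from $F_1$ to $F_2$ whose length Theorem~\ref{thm:BarnetteLarman} already bounds by $n2^{d-2}$, we are done. Note that links of faces of a normal complex are again normal, so the subcomplexes arising in the recursion stay normal; and, crucially, the $k$-banner property is inherited by them with a decrement of the parameter: a vertex link of a $k$-banner complex is $(k-1)$-banner. This follows from the definition of $k$-banner complexes recalled in Section~\ref{sec:upperbounds}, using that a link inside $\lk_C(v)$ is a link in $C$ of a strictly larger face. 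Consequently, after at most $k-2$ passages to vertex links the recursion reaches a $2$-banner normal complex, at which point we stop descending and invoke the flag-type (non-revisiting) argument employed for the flag case of Theorem~\ref{thm:BarnetteLarman} and Corollary~\ref{coro:hirsch-for-flag}.

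Concretely, starting from a monotone conservative path from $F_1$ to $F_2$, we cut it into the maximal pieces along which the leading vertex of the admissible order is constant; each piece lies in the star of that vertex~$v$ and, after deleting~$v$, becomes — by the local criteria of Section~\ref{sec:combinatorialsegments} — a monotone conservative path in the $(k-1)$-banner normal complex $\lk_C(v)$, which has at most $n-1$ vertices. Applying the statement recursively in $\lk_C(v)$, we replace that piece by a shorter monotone conservative path with the same endpoints, lift it back into $\st_C(v)$, and reconcatenate; one checks, again via the local criteria, that the reassembled path is still monotone and conservative. Iterating dimension by dimension, the banner parameter falls from $k$ to $2$ in $k-2$ steps, each step multiplying the controlled length by at most~$2$ exactly as in the proof of Theorem~\ref{thm:BarnetteLarman}, while the terminal $2$-banner complex — of dimension $d-k+1$ on at most $n-k+2$ vertices — contributes at most $n-k+2\le n$ by the non-revisiting argument. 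This produces a monotone conservative path of length at most $2^{k-2}(n-k+2)\le n2^{k-2}$; because it is obtained by successively substituting recursively furnished shorter sub-paths, the conclusion takes the existential form stated.

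The delicate part is the ``factor~$2$ per step'' bookkeeping, which must be imported from the proof of Theorem~\ref{thm:BarnetteLarman} rather than recovered by a naive sum over the pieces of the decomposition — such a sum would lose a factor equal to the number of pieces. One has to see that monotonicity, i.e.\ the lexicographic decrease of the vectors of distances, forces the pieces (hence the vertex links absorbing them) to be charged against essentially disjoint portions of the vertex set, so that the whole family costs at most twice a single $(k-1)$-banner link bound. Checking that this charging is compatible with the banner parameter dropping by \emph{exactly} one at each step — that the relevant subcomplex is genuinely the star of a single vertex, not a deeper link — is precisely where the definition of $k$-banner complex is used, and is the step I expect to demand the most care. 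A secondary point is the base case $k=2$: a $2$-banner normal complex need not be flag, so one must verify that the non-revisiting behaviour of monotone conservative paths (or at least the existence of one short, non-revisiting such path between the prescribed facets) survives this weakening, which is exactly what the $2$-banner hypothesis is designed to allow.
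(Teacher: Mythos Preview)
Your overall skeleton is the same as the paper's: decompose a monotone conservative path by its sequence of anchors, pass to vertex links where the banner parameter drops by one (Lemma~\ref{lem:linkBanner}), use that each vertex appears in at most two consecutive anchor pieces to get the factor~$2$, and bottom out at the flag case. However, three points in your write-up are off.

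First, your claim that ``a $2$-banner normal complex need not be flag'' is false: $2$-banner and flag coincide (this is stated in Section~\ref{subsec:BannerComplexes} and follows directly from the definition, since every minimal nonface of size~$\ge3$ is a critical clique). So the base case is exactly Corollary~\ref{coro:hirsch-for-flag}, with nothing further to verify.

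Second, the ``replace each piece by a shorter path, lift, and reconcatenate'' manoeuvre is unnecessary and is not what the paper does. The paper proves the \emph{universal} statement (Theorem~\ref{thm:upperBound}): every monotone conservative path in a $k$-banner normal complex has length at most $n2^{k-2}$. With that as the induction hypothesis, you simply bound each anchor piece~$\Gamma_i$ directly by $n_i2^{k-3}$ in the $(k-1)$-banner link $\lk_C(x_i)$, sum, and use $\sum n_i\le 2n$. No replacement, no reassembly, and no need to check that a spliced-together path is still monotone and conservative towards the original target --- a verification your sketch does not actually carry out and which is not automatic, since the endpoints and derived target sets of the replacement pieces must match up exactly.

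Third, you misplace the role of the banner hypothesis. That each anchor piece lives in the star of a single vertex is a consequence of the definition of monotone conservative paths (the anchor is constant along the piece); it has nothing to do with $k$-banner. The banner property enters only through Lemma~\ref{lem:linkBanner}, to guarantee that the link is $(k-1)$-banner so that the induction can continue.
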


The property of being \emph{$k$-banner} is a generalization of flagness (which is itself equivalent to $2$-banner). See the precise definitions in Section~\ref{subsec:BannerComplexes}.  Theorem~\ref{thm:upperBound-intro} interpolates (modulo a factor of $2$ and an additive term of $d$, respectively) between two of the bounds of Theorem~\ref{thm:BarnetteLarman-intro}: Since every $(d-1)$-complex is $(d+1)$-banner, we retrieve the bound $2^{d-1}n$ for arbitrary normal complexes. On the other extreme, substituting $k$ by $2$ into the theorem gives a bound of $n$ for flag normal complexes.

Theorem~\ref{thm:upperBound-intro} was proven by Novik (unpublished, 2013) as an attempt to answer the following stronger question:

\begin{questionintro}
Let~$k\ge2$ and~$C$ be a pure complex on~$n$ vertices in which any minimal nonface has dimension at most~$k$, then the diameter of~$C$ is bounded by~$n2^{k-2}$.
\end{questionintro}

Finally in Section~\ref{sec:lowerbound} we show the limitations of combinatorial segments by constructing monotone conservative paths of exponential length in normal complexes. 
We do this in two versions. In the first one the complex is a topological ball, in the second it is the boundary complex of a vertex-decomposable simplicial polytope:

\begin{thmintro}[See Theorem~\ref{thm:exampleSphere}]
\label{thm:exampleSphere-intro}
For every $d\ge 2$ and every $N\ge 4$ there is a simplicial $d$-polytope with $N+\Theta(d^2)$ vertices 
 with vertex-decomposable boundary complex and two facets $F_1$ and $F_2$ in it such that every  monotone conservative path between them has length at least $2^{d-3} N$.
\end{thmintro}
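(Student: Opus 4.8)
The plan is to build the polytope recursively in the dimension, starting from a trivial base polytope in dimension~$2$ and repeatedly applying a single dimension-raising operation — the one-point suspension $\ops$, together with the adjunction of a bounded ``gate'' of auxiliary vertices — designed so that each application (i)~keeps the complex the boundary complex of a simplicial polytope, (ii)~preserves vertex-decomposability, and (iii)~at least doubles the minimum length of a monotone conservative path between a tracked pair of facets, at a cost of $\Theta(d)$ new vertices at the $d$-th stage. Iterating from $d=2$ up to the target dimension then produces a vertex-decomposable (hence Hirsch) simplicial $d$-polytope with $N+\sum_k\Theta(k)=N+\Theta(d^2)$ vertices in which the tracked facets $F_1,F_2$ lie at monotone-conservative distance at least $2^{d-3}N$. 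The statement is not vacuous: a monotone conservative path between $F_1$ and $F_2$ exists by Corollary~\ref{coro:path-existence-2}, so the recursion always has a nonempty set of paths to bound from below.

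For the base case $d=2$ the polytope is simply an $N$-gon (up to a constant factor in $N$) with $F_1,F_2$ two antipodal edges. Its dual graph is an $N$-cycle, so \emph{every} dual path between $F_1$ and $F_2$ — monotone conservative or not — has length at least $\lfloor N/2\rfloor$, i.e.\ at least $2^{d-3}N$ after adjusting constants. This already displays in miniature the phenomenon to be propagated: a pair of targets that are ``nearby'' in the dual graph but whose connecting paths are long because they must wind around a thin obstruction.

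The inductive step is a doubling lemma. Given the stage-$(d-1)$ polytope $C_{d-1}$ with tracked facets $F_1^{(d-1)},F_2^{(d-1)}$, I would take $C_d$ to be the one-point suspension $\ops_v(C_{d-1})$ at a carefully chosen vertex $v$, first installing a small vertex-decomposable ``gate'' ($\Theta(d)$ vertices, glued along a codimension-one face) that every monotone conservative path from the new $F_1^{(d)}$ to the new $F_2^{(d)}$ is forced to traverse. The two copies of $C_{d-1}$ inside $\ops_v(C_{d-1})$ — the $v$-side and the $v'$-side — should be positioned so that such a path must realise a full monotone conservative path inside each copy, once before and once after crossing the gate; the inductive hypothesis then gives length at least $2\cdot 2^{(d-1)-3}N=2^{d-3}N$. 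One then checks the two hereditary properties: $\ops$ carries simplicial polytopes to simplicial polytopes of one higher dimension, and it preserves vertex-decomposability (with $v'$, or the apex of the gate, serving as a shedding vertex whose link and deletion remain vertex-decomposable and pure of the right dimension) — both of which are classical or short verifications.

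The main obstacle is the ``no-shortcut'' analysis, which is genuinely more demanding than a diameter lower bound because one must defeat \emph{every} monotone conservative path, not merely produce a long one. I would isolate a combinatorial invariant carried along any such path — essentially the pair consisting of the leading vertex of the current facet in its admissible order and the induced target set in the corresponding link — and argue, using only the local monotonicity (lexicographic decrease of the distance vector) and conservativeness conditions of Section~\ref{sec:combinatorialsegments}, that this invariant can change only by small, order-respecting moves. Inside the gate and inside each copy of $C_{d-1}$, this should force the path to pass through all intermediate states, hence to be long; propagating the bound through the recursion then gives the theorem. The delicate point is engineering the gate so that its vertices do not create spurious admissible orders allowing monotonicity to be met by a detour, while keeping their number linear in $d$ so that the total vertex overhead remains $\Theta(d^2)$.
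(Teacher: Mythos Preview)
Your high-level architecture --- start from an $N$-cycle, iterate a one-point suspension that doubles the minimum monotone conservative path length at a cost of $\Theta(d)$ new vertices per stage --- is exactly the paper's strategy. But the concrete mechanism you propose for the inductive step is the \emph{ball} construction of Lemma~\ref{lemma:exampleBall}, and it does not survive the passage to polytopal spheres.

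Specifically: you propose to ``install a gate $\dots$ glued along a codimension-one face''. In a simplicial sphere every ridge already lies in exactly two facets, so there is no free ridge to glue along; any such gluing destroys both the pseudomanifold property and polytopality. The paper replaces this gluing by a sequence of \emph{stellar subdivisions} of edges incident to the two suspension vertices $u,v$ (Lemma~\ref{lemma:exampleSphere}). Stellar subdivisions and one-point suspensions both preserve polytopality and vertex-decomposability (Lemma~\ref{lem:ops-vertex-decomposable}), which is what makes the sphere version go through.

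More importantly, the role of the extra vertices is not to form a topological bottleneck that every dual path must cross --- in a sphere there is no such bottleneck --- but to control \emph{vertex-distances} so that the sequence of anchors of any monotone conservative path from $F'_1$ towards $F'_2$ (or towards $\{x'_2\}$) is forced to be exactly $(u,v)$ (resp.\ $(u,v,x'_2)$), because $(x'_1,u,v,x'_2)$ becomes the \emph{unique} shortest vertex-path between $x'_1$ and $x'_2$. Once the anchors are forced, the doubling is automatic: the subpath anchored at $u$ is, after passing to $\lk_{C'}(u)\cong C$, a monotone conservative path of length $\ge L$ by induction, and likewise for $v$. Your ``invariant carried along the path'' paragraph gestures at this but does not identify the mechanism; without it the no-shortcut analysis has no teeth.

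Finally, the induction hypothesis must be loaded beyond ``length $\ge L$''. The paper carries along (i)~a distance condition $\vdist_C(x_1,x_2)\ge 3$, needed so that the stellar subdivisions on the $u$-side and the $v$-side do not interfere with each other's links, and (ii)~a degree condition $\deg(x_i)=2d-2$, needed so that only $\Theta(d)$ edges have to be subdivided at each stage and the vertex count stays at $N+\Theta(d^2)$. Your plan tracks neither, so as stated the recursion cannot close.
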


Vertex-decomposability is interesting in this context since vertex-decomposable simplicial polytopes are known to satisfy the Hirsch bound~\cite[Corollary 2.11]{ProvanBillera}.
Observe that the paths stated in this theorem are within a factor of $1+\varepsilon$ from the upper bound of Theorem~\ref{thm:BarnetteLarman-intro}, with $\varepsilon$ going to zero as $N$ goes to infinity.


\section{Combinatorial segments and monotone conservative paths}
\label{sec:combinatorialsegments}

\subsection{Preliminaries on simplicial complexes}

A simplicial complex, or simply a complex, on a \defn{vertex} set $V$ is a collection $C$ of subsets of $V$ such that $f\subset g\in C$ implies $f\in C$.
The elements of $C$ are called \defn{faces}. The~\defn{dimension} of a face~$f$ is~$|f|-1$.
If all its \defn{facets} (that is, inclusion maximal faces) have $d$ elements, $C$ is said to be \defn{pure} of dimension $d-1$, and sometimes called $(d-1)$-complex.
When describing a complex $C$ we usually only list its maximal faces.
For example, for $f\subset V$ and $v\in V$ we denote $\{f\}$ and $\{v\}$ the complexes $\{f': f'\subset f\}$ and $\{\{v\},\emptyset\}$, respectively.
Observe that from this perspective a \defn{pure simplicial $(d-1)$-complex} is nothing but a subset of $\binom{V}{d}$.
A \defn{minimal nonface} of a complex is a nonempty set of its vertices which is not a face but such that all its proper subset are faces.
A complex is called \defn{flag} if all its minimal nonfaces are 1-dimensional,  i.~e.,~edges.

The~\defn{star},~\defn{deletion} (sometimes called the \defn{antistar} or \defn{face-deletion}), and~\defn{link} of a face~$f$ in~$C$ are the following subcomplexes of~$C$, where $\sqcup$ denotes the disjoint union:
\begin{align*}
\st_C(f) & \eqdef \{f' \in C : f\cup f' \in C \}, \\
\del_C(f) &\eqdef \{f'\in C : f\not\subseteq f'\}, \\
\lk_C(f) & \eqdef \{f' \in C : f \sqcup f' \in C\}.
\end{align*}

The~\defn{join} of two simplicial complexes~$C_1$ and $C_2$ is the simplicial complex
\[
C_1\ast C_2\eqdef\{f_1\sqcup f_2 : f_1\in C_1,f_2\in C_2\}.
\]
\noindent Observe that $\st_C(f) = \lk_C(f) * \{f\}$.

The~\defn{suspension} of a simplicial complex~$C$ is the join of~$C$ with a simplicial complex consisting in two singletons.
Given a vertex~$v$ of a simplicial complex~$C$, the~\defn{one-point-suspension} of~$C$ with respect to~$v$ is the simplicial complex
\[
\ops_C(v)\eqdef (\lk_C(v) * \left\{\{v_1,v_2\}\right\})\ \cup\ (\del_C(v) * \left\{\{v_1\},\{v_2\}\right\}), 
\] 
where~$v_1$ and $v_2$ are new elements, called~\defn{suspension vertices}, that were not vertices of~$C$.
Equivalently, $\ops_C(v)$ is obtained contracting the edge $vv_2$ in the suspension $C * \{v_1,v_2\}$ of $C$.
Observe that 
\[
\lk_{\ops_{C}(v)}(v_1) \cong
\lk_{\ops_{C}(v)}(v_2) \cong C\qquad\text{ and }
\qquad
\lk_{\ops_{C}(v)}(v_1v_2) = \lk_C(v).
\]
One-point-suspensions are called \emph{wedges} in~\cite{ProvanBillera}.

Let $f$ be a face of $C$. The \defn{stellar subdivision} of $f$ in~$C$ is the simplicial complex denoted by~$\stsubd_C(f)$ and given by 
\[
	\stsubd_C(f)\eqdef\del_{C \cup ( \st_C(f)*\{a\})}(f)
=	\{f'\in C : f\not\subseteq f'\} \cup \left\{f'\cup\{a\} : f\not\subseteq f'\in \st_C(f) \right\},
\]
where~$a$ is a new vertex that is not a vertex of~$C$. Notice that in the particular case where~$f=\{v_1,\dots,v_d\}$ is a facet of~$C$, then~$\stsubd_C(f)$ consists of~$C$ in which the facet~$f$ has been replaced by the~$d$ facets obtained from~$f$ by substituting one of its vertices by~$a$.

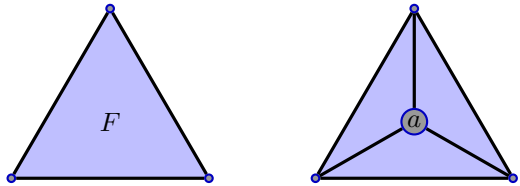
\begin{figure}[!htbp]
\centerline{\begin{tikzpicture}[vertex/.style={inner sep=1pt,circle,draw=blue!75!black,fill=black!40,thick}]

\def\rad{1.5}
\def\dist{4}

\coordinate (f1) at (-30:\rad);
\coordinate (f2) at (90:\rad);
\coordinate (f3) at (210:\rad);

\fill[blue!25!white] (f1) -- (f2) -- (f3) -- cycle;

\node at (0,0) {$F$};

\node[vertex] (nf1) at (f1) {};
\node[vertex] (nf2) at (f2) {} edge[very thick] (nf1);
\node[vertex] (nf3) at (f3) {} edge[very thick] (nf1) edge[very thick] (nf2);

\coordinate (g1) at ($(-30:\rad)+(\dist,0)$);
\coordinate (g2) at ($(90:\rad)+(\dist,0)$);
\coordinate (g3) at ($(210:\rad)+(\dist,0)$);
\coordinate (u) at (\dist,0);

\fill[blue!25!white] (g1) -- (g2) -- (g3) -- cycle;

\node[vertex] (nf1) at (g1) {};
\node[vertex] (nf2) at (g2) {} edge[very thick] (nf1);
\node[vertex] (nf3) at (g3) {} edge[very thick] (nf1) edge[very thick] (nf2);
\node[vertex] (nf4) at (u) {$a$} edge[very thick] (nf1) edge[very thick] (nf2) edge[very thick] (nf3);

\end{tikzpicture}}
\caption{The stellar subdivision of a simplex $F$.}
\label{fig:stellar}
\end{figure}

A pure $(d-1)$-complex $C$ is \defn{vertex-decomposable}~\cite{DeloeraKlee,ProvanBillera} if it is either a simplex or if there exists a vertex $x\in C$ such that $\lk_{C}(x)$ and $\del_C(x)$ are both vertex-decomposable and $\del_C(x)$ is still pure of dimension $d-1$.

\begin{lemma}[{\cite[Proposition~2.5 and Theorem~2.7]{ProvanBillera}}]
\label{lem:ops-vertex-decomposable}
Let~$C$ be a~$d$-complex,~$x$ a vertex of~$C$ and~$f$ a face of~$C$. 
\begin{itemize}
 \item The complex $C$ is vertex-decomposable if and only if $\ops_C(x)$ is vertex-decomposable.
 \item If $C$ is vertex-decomposable, then $\stsubd_C(f)$ is a vertex-decomposable~$d$-complex.
\end{itemize}
\end{lemma}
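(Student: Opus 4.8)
The plan is to establish both items by induction on the number of vertices of $C$, in each case building a shedding vertex for the new complex out of a shedding vertex of $C$ (or, in one subcase, out of a suspension vertex). I would first record two closure properties used throughout. \textbf{(J)} If $C_1$ and $C_2$ are vertex-decomposable, so is $C_1\join C_2$; this follows by induction on the number of vertices from the identities $\lk_{C_1\join C_2}(x)=\lk_{C_1}(x)\join C_2$ and $\del_{C_1\join C_2}(x)=\del_{C_1}(x)\join C_2$ for $x$ a vertex of the non-simplex factor. In particular a cone $D\join\{v\}$ and a complex $\partial\Delta_\sigma\join L$ are vertex-decomposable whenever $D,L$ are, since $\partial\Delta_\sigma$ is vertex-decomposable. \textbf{(L)} If $C$ is vertex-decomposable then $\lk_C(f)$ is vertex-decomposable for every face $f$; it suffices to treat a single vertex $w$ and iterate. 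If $x$ is a shedding vertex of $C$ with $x\neq w$, then $\del_{\lk_C(w)}(x)=\lk_{\del_C(x)}(w)$ and $\lk_{\lk_C(w)}(x)=\lk_{\lk_C(x)}(w)$, so the inductive hypothesis for $\del_C(x)$ and $\lk_C(x)$ (vertex-decomposable with fewer vertices) makes $x$ a shedding vertex of $\lk_C(w)$, purity of $\del_{\lk_C(w)}(x)$ following from the same identity together with the inductively known vertex-decomposability of $\lk_{\del_C(x)}(w)$. The cases $x=w$ and $xw\notin C$ are immediate.

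For the one-point suspension, observe that $\ops_C(v)$ is pure of dimension $d+1$ when $C$ is pure of dimension $d$ (for instance because it is obtained from the suspension $C\join\{\{v_1\},\{v_2\}\}$ by contracting one edge, which preserves the cardinality of every facet), and that $\ops_{\Delta_W}(v)$ is a simplex. For the forward implication, assume $C$ is not a simplex and fix a shedding vertex $y$. If $y=v$, then $v_1$ is a shedding vertex of $\ops_C(v)$: $\lk_{\ops_C(v)}(v_1)\cong C$ is vertex-decomposable, and $\del_{\ops_C(v)}(v_1)=\del_C(v)\join\{v_2\}$ is a cone over $\del_C(v)$, which is vertex-decomposable because $v=y$ is shedding, so (J) applies; purity is clear. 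If $y\neq v$, then $y$ is a shedding vertex of $\ops_C(v)$: here $\del_{\ops_C(v)}(y)\cong\ops_{\del_C(y)}(v)$ and $\lk_{\ops_C(v)}(y)$ equals either $\ops_{\lk_C(y)}(v)$ or the suspension $\lk_C(y)\join\{\{v_1\},\{v_2\}\}$, so the inductive hypothesis on $\del_C(y)$ and $\lk_C(y)$ together with (J) does it. The converse is immediate: $C\cong\lk_{\ops_C(v)}(v_1v_2)$ by the isomorphism recorded just before the statement, hence $C$ is vertex-decomposable by (L).

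For the stellar subdivision I would likewise induct on $\lvert V(C)\rvert$. If $C=\Delta_V$, then $\stsubd_C(f)=\Delta_{V\ssm f}\join\partial\Delta_f\join\{a\}$ is a join of vertex-decomposable complexes, hence vertex-decomposable, and pure of dimension $d$. Otherwise, if $C$ has a shedding vertex $x\notin f$, then $\stsubd$ is ``local'' at $x$: $\del_{\stsubd_C(f)}(x)=\stsubd_{\del_C(x)}(f)$, while $\lk_{\stsubd_C(f)}(x)$ is $\stsubd_{\lk_C(x)}(f)$ if $f\in\lk_C(x)$ and $\lk_C(x)$ otherwise. Since $\stsubd$ replaces every facet through $f$ by the same number of facets of equal dimension and leaves the others fixed, it preserves purity and dimension; so the inductive hypothesis applied to the vertex-decomposable complexes $\del_C(x)$ and $\lk_C(x)$ shows that $x$ is still a shedding vertex of $\stsubd_C(f)$.

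The main obstacle is the last configuration: $C$ not a simplex, but \emph{every} shedding vertex of $C$ lies in $f$ (hence $\lvert f\rvert\ge 2$, the case $\lvert f\rvert=1$ being vacuous as $\stsubd_C(\{v\})\cong C$). A shedding vertex $x\in f$ of $C$ need not stay one for $\stsubd_C(f)$: although $\lk_{\stsubd_C(f)}(x)=\stsubd_{\lk_C(x)}(f\ssm\{x\})$ remains vertex-decomposable, one computes $\del_{\stsubd_C(f)}(x)=\del_C(x)\cup\bigl(\Delta_{\{a\}\cup(f\ssm\{x\})}\join\lk_C(f)\bigr)$, which carries a piece of dimension $d-1$ and so is not pure unless $f$ is a facet. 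The resolution is to show this configuration cannot occur, i.e.\ that the shedding vertices of a non-simplex vertex-decomposable complex are never all contained in a single face. This is easy when $\del_C(x)$ is a simplex $\Delta_W$: the facets of $\lk_C(x)$ are then ridges $W\ssm\{w\}$ of $\Delta_W$, each such $w$ is again a shedding vertex of $C$ (its link and deletion in $C$ being vertex-decomposable, the latter a simplex), and $\{x\}\cup\{\text{all such }w\}$ is not a face of $C$. The case where $\del_C(x)$ is not a simplex must be folded into the same induction on $\lvert V(C)\rvert$, and carrying out that propagation of shedding vertices rigorously is the technically delicate step --- it is precisely what is packaged in \cite[Proposition~2.5 and Theorem~2.7]{ProvanBillera}.
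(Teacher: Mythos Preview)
The paper does not give its own proof of this lemma; it simply cites Provan--Billera. So there is no ``paper proof'' to compare against, only your attempt to assess.

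There is a small slip in the converse direction for the one-point suspension: you write $C\cong\lk_{\ops_C(v)}(v_1v_2)$, but the identity recorded just before the statement says $\lk_{\ops_C(v)}(v_1v_2)=\lk_C(v)$, not $C$. You want $C\cong\lk_{\ops_C(v)}(v_1)$ instead; with that correction your argument for the first item is fine.

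The real issue is your treatment of the stellar subdivision when a shedding vertex $x$ of $C$ lies in $f$. Your formula
\[
\del_{\stsubd_C(f)}(x)=\del_C(x)\cup\bigl(\Delta_{\{a\}\cup(f\setminus\{x\})}\ast\lk_C(f)\bigr)
\]
is correct, but your dimension count is off: if $|f|=k$, then $\dim\Delta_{\{a\}\cup(f\setminus\{x\})}=k-1$ and $\dim\lk_C(f)=d-k$, so the join has dimension $(k-1)+(d-k)+1=d$, not $d-1$. Hence the deletion \emph{is} pure of dimension $d$. Better still, the new vertex $a$ is a shedding vertex of this deletion: its link there is $\Delta_{f\setminus\{x\}}\ast\lk_C(f)$, a cone over a vertex-decomposable complex (by your (L) and (J)), while its deletion is exactly $\del_C(x)$, pure of dimension $d$ and vertex-decomposable since $x$ was shedding in $C$. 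Combining this with your (correct) identification $\lk_{\stsubd_C(f)}(x)=\stsubd_{\lk_C(x)}(f\setminus\{x\})$ and induction on the number of vertices, $x$ is a shedding vertex of $\stsubd_C(f)$ outright. So your ``main obstacle'' is illusory, and the detour through the claim that shedding vertices cannot all lie in a single face---which you leave unfinished---is unnecessary.
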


The \defn{facet-adjacency graph} or \defn{dual graph} of $C$ has the facets of $C$ as nodes with two facets $F$ and $F'$ being adjacent in the graph if they differ in a single vertex.
A pure simplicial complex~$C$ is called \defn{normal} if between every two facets $F$, $F'$ there is a~\defn{dual path} (a path in the dual graph) consisting only of facets that contain $F\cap F'$.
A pure simplicial complex $C$ is called a \defn{pseudomanifold} if every codimension-one simplex lies in either 1 or 2 facets.
The codimension-one simplices lying in only one facet form the \defn{boundary} of $C$.

Although our main question of interest is the diameter of the dual graph, we use also the distance in the ``usual graph'' (or $1$-skeleton) of $C$, whose vertices and edges are those of~$C$ itself. We refer to it as the \defn{vertex-distance} between two vertices $u$ and $v$ of $C$ and denote it $\vdist_C(u,v)$. For two sets of vertices $S$ and~$S'$ we denote
\[
\vdist_C(S,S'):= \min_{u\in S, v\in S'} \vdist_C(u,v),
\]
with the standard convention that if one or both of $S$ and $S'$ are empty the minimum is $\infty$. If~$\dim(C)=0$, we also take the convention that for nonempty sets~$S$ and~$S',\vdist_C(S,S')=0$ if~$S\cap S'\neq\emptyset$, and~$\vdist_C(S,S')=1$ otherwise.

\subsection{Monotone conservative paths}

The following definitions are crucial in order to define monotonicity and conservativeness of paths.

\begin{definition}
\label{def:ordered-facet}
\label{defi:vector-of-distances}
Let $F=(v_1,\dots,v_d)$ be a facet of a pure $(d-1)$-dimensional simplicial complex~$C$ with its vertices given in a specific order.
We call this an \defn{ordered facet}.
Let $S$ be a subset of vertices of $C$, refered to as the~\defn{target set}.
The~\defn{vector of distances} $\Lambda=(\lambda_1,\dots,\lambda_d)$ from $F$ to $S$ is defined as follows:
\begin{itemize}
\item $\lambda_1:= \vdist(v_1, S)$.
\item $(\lambda_2,\dots,\lambda_d)$ is the vector of distances from $F\ssm v_1$ to $S'$ in $\lk_C(v_1)$, where $S'$ is the following subset of $V_1:=\operatorname{vertices}(\lk_C(v_1))$:
\[
S':=
\begin{cases}
\{v \in V_1 : \vdist_C(v,S) = \vdist_C(v_1, S)-1\}  & \text{if } v_1\not\in S,\\
S\cap \lk_C(v_1) & \text{if } v_1\in S.
\end{cases}
\]
\end{itemize}
We say the ordering is \defn{admissible} with respect to $S$ if $\lambda_1=\vdist(v_1,S)=\vdist(F,S)$ and $F\ssm v_1$ (with the induced ordering) is admissible with respect to  $S'$ in $\lk_C(v_1)$.
\end{definition}

\begin{example}
Consider the pure flag 2-dimensional complex given in Figure~\ref{fig:ex_admissible}, with target set~$S=\{s_1,s_2\}$. The ordered facet $F_1=(a_1,a_2,a_3)$ is not admissible, whereas the ordered facet $F_2=(b_1,b_2,b_3)$ and $F_3=(c_1,c_2,c_3)$ are admissible. The facet $F_1$ would be admissible with respect to other orderings, e.~g.~$(a_2,a_3,a_1)$. The vectors of distances for the facets $F_1,F_2$ and $F_3$ are $(5,0,0)$, $(2,1,1)$, and $(0,0,\infty)$ respectively. In particular, it is possible to have infinite values in a vector of distances. 
\end{example}

\begin{figure}[!htbp]
 \centerline{\begin{tikzpicture}[vertex/.style={inner sep=1pt,circle,draw=blue!75!black,fill=black!40,thick}]

\coordinate (rowdown) at (0,0);
\coordinate (rowup) at (60:1);
 
\node[vertex,label=below:{$a_1$}] (d1) at (rowdown) {};
\node[vertex,label=below:{$a_2$}] (d2) at ($(rowdown)+(1,0)$) {} edge[very thick] (d1);
\node[vertex,label=below:{$b_3$}] (d3) at ($(rowdown)+(2,0)$) {} edge[very thick] (d2);
\node[vertex,label=below:{$b_1$}] (d4) at ($(rowdown)+(3,0)$) {} edge[very thick] (d3);
\node[vertex] (d5) at ($(rowdown)+(4,0)$) {} edge[very thick] (d4);
\node[vertex,label=below:{$s_1=c_1$}] (d6) at ($(rowdown)+(5,0)$) {} edge[very thick] (d5);

\node[vertex,label=above:{$a_3$}] (u1) at (rowup) {} edge[very thick] (d1) edge[very thick] (d2);
\node[vertex] (u2) at ($(rowup)+(1,0)$) {} edge[very thick] (d2) edge[very thick] (d3) edge[very thick] (u1);
\node[vertex,label=above:{$b_2$}] (u3) at ($(rowup)+(2,0)$) {} edge[very thick] (d3) edge[very thick] (d4) edge[very thick] (u2);
\node[vertex] (u4) at ($(rowup)+(3,0)$) {} edge[very thick] (d4) edge[very thick] (d5) edge[very thick] (u3);
\node[vertex,label=above:{$s_2=c_2$}] (u5) at ($(rowup)+(4,0)$) {} edge[very thick] (d5) edge[very thick] (d6) edge[very thick] (u4);
\node[vertex,label=above:{$c_3$}] (u6) at ($(rowup)+(5,0)$) {} edge[very thick] (d6) edge[very thick] (u5);

\node at ($(d1)!0.5!(d2)!0.33!(u1)$) {$F_1$};
\node at ($(d3)!0.5!(d4)!0.33!(u3)$) {$F_2$};
\node at ($(u5)!0.5!(u6)!0.33!(d6)$) {$F_3$};

\end{tikzpicture}}
 \caption{Examples of admissible and nonadmissible ordered facets in a~$2$-dimensional complex.}
 \label{fig:ex_admissible}
\end{figure}
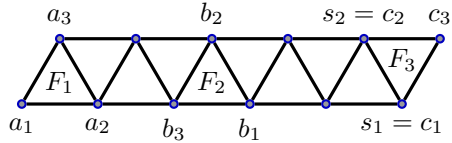

We are primarily interested in the case where $S$ is a facet, but the recursive definition forces us to consider more general target sets. Observe that the definition may even make the new target set $S'$ obtained from $S$ be empty, in which case all entries in the vector of distances starting at that point will be infinite. The following statement shows that this  never happens in a normal complex if the initial target set is a facet.

\begin{lemma}
\label{lem:finiteEntries}
Let~$C$ be a normal~$(d-1)$-complex, $S$ be a nonempty set of vertices of~$C$, and~$F=(v_1, \dots, v_d)$ be an ordered facet of $C$.
If $F$ is admissible with respect to~$S$ and that one of the following condition is true
 \begin{itemize}
  \item $F\cap S=\emptyset$,
  \item $S$ is a face of~$C$ not strictly contained in $F$,
 \end{itemize} 
 then the vector of distances~$\Lambda=(\lambda_1,\dots,\lambda_d)$ from~$F$ to~$S$ has no infinite entry.
\end{lemma}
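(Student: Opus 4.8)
The plan is to induct on $d$. The base case $d=1$ is immediate: then $\Lambda=(\vdist_C(v_1,S))$, which is finite by the $\dim=0$ convention since $S\ne\emptyset$. For the inductive step assume $d\ge2$. By Definition~\ref{defi:vector-of-distances} we have $\lambda_1=\vdist_C(v_1,S)$ while $(\lambda_2,\dots,\lambda_d)$ is the vector of distances from $F'\eqdef F\ssm v_1$ to the auxiliary set $S'$, computed inside $C'\eqdef\lk_C(v_1)$. Since $C$ is normal its dual graph is connected and (because $d\ge2$, so that adjacent facets share a vertex) so is its $1$-skeleton; as $S\ne\emptyset$ this already gives $\lambda_1<\infty$. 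Everything then reduces to feeding the inductive hypothesis the triple $(C',F',S')$, a $(d-2)$-complex together with one of its ordered facets and a target set. For that I must check four things: (i) $C'$ is again normal; (ii) $F'$ is admissible with respect to $S'$ in $C'$; (iii) $S'\ne\emptyset$; (iv) $S'$ still satisfies one of the two bulleted alternatives relative to $F'$ in $C'$.

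Points (i) and (ii) are quick. For (i): if $g,g'$ are facets of $C'$ then $\{v_1\}\cup g$ and $\{v_1\}\cup g'$ are facets of $C$, and a dual path between them through facets containing their intersection (supplied by normality of $C$) uses only facets containing $v_1$; deleting $v_1$ from each turns it into a dual path in $C'$ between $g$ and $g'$ through facets containing $g\cap g'$. Point (ii) needs no argument at all: ``$F'$ admissible with respect to $S'$ in $C'$'' is literally the recursive clause in the definition of ``$F$ admissible with respect to $S$''. So the real content is (iii) and (iv), which I would treat by splitting on whether $v_1\in S$.

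If $v_1\notin S$ --- forced by the first bulleted hypothesis, and a subcase of the second --- then $S'=\{v\in V_1:\vdist_C(v,S)=\vdist_C(v_1,S)-1\}$. It is nonempty because the second vertex of a shortest $1$-skeleton path from $v_1$ to $S$ (such a path exists and has positive length, by connectedness and $v_1\notin S$) lies in $S'$; and it is disjoint from $F'$, since $v_j\in F'\cap S'$ would give $\vdist_C(v_j,S)=\vdist_C(v_1,S)-1<\vdist_C(v_1,S)=\vdist_C(F,S)\le\vdist_C(v_j,S)$, where the equality $\vdist_C(v_1,S)=\vdist_C(F,S)$ is part of admissibility. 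Hence $(C',F',S')$ satisfies the first alternative. If instead $v_1\in S$, then only the second hypothesis can hold (the first is incompatible with $v_1\in F\cap S$), so $S$ is a face of $C$ with $S\not\subsetneq F$; since $v_1\in S$, every other vertex of $S$ lies in $C'$, so $S'=S\ssm v_1$, which is a face of $C'$. It is nonempty, for otherwise $S=\{v_1\}$ is strictly contained in $F$ as $d\ge2$, a contradiction; and it is not strictly contained in $F'$, for otherwise $S=(S\ssm v_1)\cup\{v_1\}$ would be strictly contained in $F$, again a contradiction. Hence $(C',F',S')$ satisfies the second alternative. In both cases the inductive hypothesis shows $(\lambda_2,\dots,\lambda_d)$ has no infinite entry, completing the induction. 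I expect the bookkeeping in this last step to be the main obstacle: one must track which of the two bulleted conditions is inherited by the link and argue that $S'$ never collapses to $\emptyset$ --- using connectedness when $v_1\notin S$ and the ``not strictly contained'' hypothesis when $v_1\in S$ --- while the low-dimensional degenerations (where $C'$ becomes $0$-dimensional) are absorbed by the stated $\dim=0$ conventions.
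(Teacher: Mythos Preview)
Your proof is correct and follows essentially the same approach as the paper: induction on the dimension, with the inductive step verifying that the link $\lk_C(v_1)$ inherits normality, that $F\ssm v_1$ inherits admissibility, and that $S'$ again satisfies one of the two alternatives. Your case split (on whether $v_1\in S$) is equivalent to the paper's (on which bulleted hypothesis holds), and you are more explicit than the paper in justifying $S'\ne\emptyset$ and in spelling out why $S'=S\ssm v_1$ is not strictly contained in $F'$ in the second case.
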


\begin{proof}
We use induction on the dimension~$d-1$ of~$C$. For~$d=1$, in both cases the vector of distances has a single entry which is either zero or one. Indeed the set~$F$ is a singleton, and~$S$ is a nonempty set.
For~$d>1$, the entry~$\lambda_1$ of the vector of distances is finite since~$S$ is nonempty and~$C$ is normal thus has connected~$1$-skeleton.
Notice that the simplicial complex~$C$ being pure and normal ensures that~$\lk_C(v_1)$ is also pure, normal and of dimension one less.
Further, as the ordering on~$F$ is admissible with respect to~$S$, so is the ordering on~$F\ssm v_1$ with respect to~$S'$.
Suppose first that~$F\cap S=\emptyset$. Because the ordering on~$F$ is admissible with respect to~$S$, the vertex~$v_1$ is at smallest (and nonzero) vertex-distance of~$S$ in the facet~$F$.
Therefore the facet~$F\ssm v_1$ and the set~$S'$ in Definition~\ref{def:ordered-facet} are disjoint. Suppose now that~$S$ is a face of~$C$ such that~$v_1\in S$.
In this case~$S'=S\cap\lk_C(v_1)$ is a face of~$\lk_C(v_1)$ 
which is not strictly contained in~$F\ssm v_1$.
In both cases, the induction hypothesis ensures that the vector~$(\lambda_2,\dots,\lambda_d)$ has no infinite entry.
\end{proof}

The previous proof also shows that if~$F$ is admissible with respect to~$S$ and the vector of distances contains some zero, then all zeroes are at the beginning of the vector. That is to say:
no entry of its vector of distances may be zero if a previous entry is nonzero.

We now introduce the main definitions used in this article.

\begin{definition}
\label{def:monotone-path}
Let~$C$ be a pure and normal complex, $S$ be a nonempty target set of vertices of~$C$, and $\Gamma=[F_0,\dots,F_N]$ be a path of ordered facets in the dual graph of $C$.
\begin{enumerate}
	\item We say that $\Gamma$ is \defn{(combinatorially) monotone towards $S$} if the sequence $(\Lambda_0,\dots,\Lambda_N)$ of vectors of distances from its facets to~$S$ is lexicographically decreasing.
That is, $\Lambda_{i+1}$ is lexicographically smaller than $\Lambda_i$ for every $i$.
	\item For each $i=1,\dots,N$ we call \defn{index} of the step from $F_{i-1}$ to $F_i$ in $\Gamma$ the minimum $k$ for which $v_k\ne v'_k$, where $F_{i-1}=(v_1,\dots,v_d)$ and $F_{i}=(v'_1,\dots,v'_d)$. Alternatively we call it sometimes the index of~$F_i$ when there is no ambiguity on the path~$\Gamma$ that we are considering. We say that the step from $F_{i-1}$ to $F_i$, is \defn{conservative towards $S$} if the following two things happen:
	\begin{itemize}
		\item $F_{i-1}\ssm F_i$ is the last vertex in the ordering of $F_{i-1}$. That is, $F_i$ keeps the initial vertices of $F_{i-1}$ as much as possible.
		\item $F_i$  is admissible with respect to $S$ and it has the maximum index among all possible choices of admissible reorderings of $F_i$.  
		That is, $F_i$ tries to keep the order established in $F_{i-1}$ as much as admissible.
	\end{itemize}
	We say that $\Gamma$ itself is \defn{conservative towards $S$} if $F_0$ is admissible and every step is conservative.
\end{enumerate}
\end{definition}

\begin{example}\label{ex:non_ex}
Consider the pure flag 2-dimensional complex given in Figure~\ref{fig:ex_monotone_conserv}, with target set $S=\{a_8\}$.
The ordered facet $F=(a_1,a_2,a_3)$ has distance vector $(2,2,1)$ and is admissible.
The ordered facet $G_1=(a_5,a_1,a_3)$ and $G_2=(a_1,a_5,a_3)$ both have distance vector $(2,1,1)$ and are admissible.
The ordered facet $H_1=(a_4,a_1,a_2)$ and $H_2=(a_1,a_4,a_2)$ both have distance vector $(2,1,1)$ and are admissible.
The step $[F,G_1]$ is monotone, but does not satisfy either conditions of conservativeness: $a_2$ is not the last vertex of~$F$ and $G_1$ does not have maximal index.
The step $[F,G_2]$ is monotone but not conservative: $G_2$ has maximal index, but $a_2$ is not the last vertex of~$F$.
The step $[F,H_1]$ is monotone but not conservative: $a_3$ is the last vertex of $F$, but $H_1$ is not of maximal index.
The step $[F,H_2]$ is monotone and conservative: $a_3$ is the last vertex of $F$ and $H_2$ has maximal index.
Note that these four steps form initial segments of  shortest paths in the dual graph from $F$ to $S$, but only one of them is monotone and conservative.

To obtain a nonmonotone but conservative step, consider the complex $\left\{\{1,2\},\{1,3\},\{1,4\}\right\}$ with $F=(1,2)$, $G=(1,3)$, and $S=\{4\}$.
The path $[F,G]$ is not monotone but is conservative.
\end{example}

\begin{figure}[!hbtp]
 \centerline{\begin{tikzpicture}[vertex/.style={inner sep=1pt,circle,draw=blue!75!black,fill=black!40,thick}]

\def\rad{1.5}

\node[vertex,label=below:{$a_1$}] (a1) at (0,0) {};
\node[vertex,label=below:{$a_2$}] (a2) at (240:\rad) {} edge[very thick] (a1);
\node[vertex,label=below:{$a_3$}] (a3) at (300:\rad) {} edge[very thick] (a1) edge[very thick] (a2);
\node[vertex,label=left:{$a_4$}] (a4) at (180:\rad) {} edge[very thick] (a1) edge[very thick] (a2);
\node[vertex,label=right:{$a_5$}] (a5) at (0:\rad) {} edge[very thick] (a1) edge[very thick] (a3);
\node[vertex,label=left:{$a_6$}] (a6) at (120:\rad) {} edge[very thick] (a1) edge[very thick] (a4);
\node[vertex,label=right:{$a_7$}] (a7) at (60:\rad) {} edge[very thick] (a1) edge[very thick] (a5) edge[very thick] (a6);
\node[vertex,label=above:{$a_8$}] (a8) at ($(60:\rad)+(120:\rad)$) {} edge[very thick] (a6) edge[very thick] (a7);

\node at ($(a2)!0.5!(a3)!0.33!(a1)$) {$F$};
\node at ($(a1)!0.5!(a5)!0.33!(a3)$) {$G$};
\node at ($(a1)!0.5!(a4)!0.33!(a2)$) {$H$};

\end{tikzpicture}}
 \caption{A $2$-dimensional flag simplicial complex having monotone and nonconservative paths.}
 \label{fig:ex_monotone_conserv}
\end{figure}
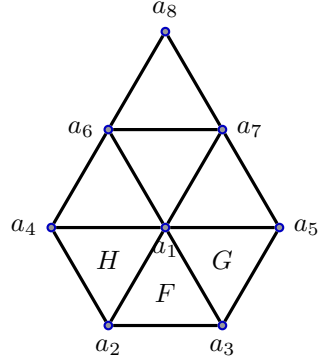

\begin{remark}
In a conservative step~$[F_{i-1},F_i]$ towards a target set~$S$, the index somehow denotes the ``depth'' at which the step occurs in the recursive definition.
\end{remark}

It is clear from this definition that for a path~$\Gamma$ of ordered facets in the dual graph and a target set~$S$,
\begin{itemize}
 \item if~$\Gamma$ is monotone (resp.~conservative) towards~$S$, then any subpath of~$\Gamma$ is also monotone (resp.~conservative) towards~$S$,
 \item if~$\Gamma_1$ and $\Gamma_2$ are monotone (resp.~conservative) towards~$S$ and the last ordered facet in $\Gamma_1$ equals the first one in $\Gamma_2$, then the concatenation of $\Gamma_1$ and $\Gamma_2$ is monotone (resp.~conservative) towards~$S$. 
\end{itemize}

Define the~\defn{anchor} of an ordered facet to be its first vertex. The anchors along a monotone path towards~$S$ form a vertex-path in the~$1$-skeleton of~$C$ along which the distance to~$S$ decreases.
In particular, they form a shortest path between the first and last anchor. Notice that in a monotone and conservative path~$\Gamma$ from~$F$ towards~$S$, the simplices with the same anchor~$x_0$ form a subpath~$\Gamma_0$ of~$\Gamma$. Moreover, 
the deletion of~$x_0$ in the path~$\Gamma_0$ is a monotone conservative path~$\Gamma'_0$ in~$\lk_C(x_0)$ towards the set~$\{x\in V : \vdist_C(x,x_0)=1,\vdist_C(x,S) = \vdist_C(x_0,S)-1\}$ which is the set~$S'$ of Definition~\ref{def:ordered-facet}.

We now show the existence of monotone conservative paths in any pure normal complex, and bound their lengths. The following theorem deals with the existence part, up to the (immediate) existence of admissible orderings.

\begin{theorem}
\label{thm:step-existence}
Let $C$ be a pure and normal complex, $S$ be a nonempty target set, and $F_0$ be an ordered facet of~$C$.
If $F_0\cap S=\emptyset$ and $F_0$ is admissible with respect to~$S$, then there exists an ordered facet $F_1$ adjacent to~$F_0$ such that the step $[F_0,F_1]$ is monotone and conservative.
\end{theorem}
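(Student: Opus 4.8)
The plan is to prove the statement by induction on the dimension $d-1$ of $C$, carrying out the inductive step in the link of the anchor $v_1$ of $F_0$. Write $F_0=(v_1,\dots,v_d)$ and let $\Lambda_0=(\lambda_1,\dots,\lambda_d)$ be its vector of distances to $S$; since $F_0$ is admissible with respect to $S$ and $F_0\cap S=\emptyset$, Lemma~\ref{lem:finiteEntries} guarantees that $\lambda_1=\vdist_C(v_1,S)=\vdist_C(F_0,S)$ is finite, and $\lambda_1\ge 1$ because $v_1\notin S$. In the base case $d=1$, the complex $C$ is a set of vertices, $S$ is a nonempty set of them, and $\lambda_1=1$; choosing any $u\in S$ produces a facet $F_1=(u)\ne F_0$ adjacent to $F_0$ with $\vdist_C(u,S)=0$, so $\Lambda_1=(0)$ is lexicographically smaller than $(1)=\Lambda_0$, and the step is trivially conservative (the unique vertex of $F_0$ is deleted and the unique ordering of $F_1$ is admissible of maximal index).

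For $d>1$, set $L:=\lk_C(v_1)$, which is pure, normal and of dimension $d-2$ (as noted in the proof of Lemma~\ref{lem:finiteEntries}); set $F_0':=(v_2,\dots,v_d)$, an ordered facet of $L$; and let $S'$ be the set associated with $v_1$ and $S$ as in Definition~\ref{def:ordered-facet}, so that $S'=\{v\in\operatorname{vertices}(L):\vdist_C(v,S)=\lambda_1-1\}$ because $v_1\notin S$. I would first record three facts: (i) $F_0'$ is admissible with respect to $S'$ in $L$, directly from the recursive definition of admissibility of $F_0$; (ii) $F_0'\cap S'=\emptyset$, since each $v_j$ with $j\ge 2$ has $\vdist_C(v_j,S)\ge\vdist_C(F_0,S)=\lambda_1>\lambda_1-1$; (iii) $S'\ne\emptyset$, because the second vertex of a shortest path from $v_1$ to $S$ in the $1$-skeleton of $C$ is a neighbor of $v_1$, hence a vertex of $L$, and lies at distance $\lambda_1-1$ from $S$. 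Applying the inductive hypothesis to the triple $(L,S',F_0')$ yields a facet $F_1'$ of $L$ adjacent to $F_0'$ such that the step $[F_0',F_1']$ is monotone and conservative towards $S'$. Put $F_1:=\{v_1\}\cup F_1'$: since $F_1'\in L=\lk_C(v_1)$ this is a facet of $C$, and it differs from $F_0$ in exactly the single vertex in which $F_1'$ differs from $F_0'$, hence is adjacent to $F_0$. Observe that the first bullet of conservativeness holds for \emph{any} ordering we later place on $F_1$, because $F_0\ssm F_1=F_0'\ssm F_1'$ is, by conservativeness of $[F_0',F_1']$, the last vertex of $F_0'$, which is the last vertex of $F_0$.

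It remains to order $F_1$ so that $[F_0,F_1]$ is monotone and conservative, and here I would split into two cases. If $F_1'\cap S'=\emptyset$, then every $w\in F_1'$ is a vertex of $L$ (hence a neighbor of $v_1$) with $\vdist_C(w,S)\ne\lambda_1-1$, so $\vdist_C(w,S)\ge\lambda_1$ and therefore $\vdist_C(F_1,S)=\lambda_1=\vdist_C(v_1,S)$; thus ``$v_1$ followed by the conservative ordering of $F_1'$'' is an admissible ordering of $F_1$. Since admissible orderings of $F_1$ with anchor $v_1$ correspond bijectively to admissible orderings of $F_1'$ with respect to $S'$ with index raised by one, and any such ordering has index at least $2$ relative to $F_0$ (hence larger than the index $1$ of any admissible ordering of $F_1$ with a different anchor), this particular ordering has maximal index and may be declared the conservative one. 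For it, $\Lambda_1=(\lambda_1,\Lambda_1')$ and $\Lambda_0=(\lambda_1,\Lambda_0')$, where $\Lambda_1'$ is lexicographically smaller than $\Lambda_0'$ by monotonicity of $[F_0',F_1']$; hence $\Lambda_1$ is lexicographically smaller than $\Lambda_0$. If instead $F_1'\cap S'\ne\emptyset$, pick $w\in F_1'\cap S'$: then $\vdist_C(w,S)=\lambda_1-1$, so $\vdist_C(F_1,S)=\lambda_1-1<\lambda_1=\vdist_C(v_1,S)$, which forces the anchor of every admissible ordering of $F_1$ to be a vertex of $F_1'$ at distance $\lambda_1-1$ from $S$; choosing any admissible ordering of $F_1$ of maximal index, its first coordinate is $\lambda_1-1<\lambda_1$, so $\Lambda_1$ is lexicographically smaller than $\Lambda_0$ already in the first coordinate. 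In both cases $[F_0,F_1]$ is monotone and conservative, which completes the induction.

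The delicate point, and the one I would treat most carefully, is the matching between the recursion in the definition of admissibility and the maximal-index clause of conservativeness: one must verify that the conservative reordering of $F_1$ in $C$ either preserves the anchor $v_1$, in which case it restricts to a conservative reordering of $F_1'$ in $L$ and the inductive monotonicity transfers verbatim, or else changes the anchor, in which case it necessarily passes to a vertex strictly closer to $S$ and monotonicity is immediate from the first coordinate. The case split on whether $F_1'$ meets $S'$ is precisely what separates these two scenarios; everything else is bookkeeping with vertex-distances and the triangle inequality, using that every vertex of $\lk_C(v_1)$ is a neighbor of $v_1$ in $C$.
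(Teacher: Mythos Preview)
Your proof is correct and follows essentially the same route as the paper: induction on the dimension, passing to the link of the anchor $v_1$, applying the inductive hypothesis to obtain $F_1'$, and then splitting into the two cases according to whether the new facet still has $v_1$ as a closest vertex to $S$ or not. Your case split on whether $F_1'\cap S'=\emptyset$ is exactly equivalent to the paper's split on whether $\vdist(F_1,S)=\vdist(v_1,S)$, and your treatment of the maximal-index condition is a slightly more explicit version of the paper's argument; the only cosmetic difference is that the paper also writes out the case $d=2$ separately, which your inductive step already covers.
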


\begin{proof}
Lemma~\ref{lem:finiteEntries} ensures that all entries in the vector of distances of~$F_0$ are finite, so that we do not need to define any comparison convention on potential infinite values.

The proof works by induction on the dimension~$d-1$ of~$C$, the case~$d=1$ being clear.

Let us also deal with the case $d=2$ separately.
In this case, $C$ is a graph and $F_0$ is an ordered edge $(u,v)$. The vector of distances is $(a,1)$, where $a\ge 1$ is the distance from~$u$ to~$S$ in the graph. Simply take $F_1=(w,u)$, where $w$ is any neighbor of $u$ at distance $a-1$ from $S$.

For arbitrary $d$, let $F_0=\{v_1,\dots, v_d\}$ (in this order) and let $F'_0=\{v_2,\dots, v_{d}\}=F_0\ssm v_1$.
Observe that (as in the proof of Lemma~\ref{lem:finiteEntries}) the condition $F_0\cap S=\emptyset$ is inherited into $F'_0 \cap S' = \emptyset$, where 
\[
S'=\{v \in \operatorname{vertices}(\lk_C(v_1)) : \vdist_C(v,S) = \vdist_C(v_1, S)-1\}
\]
is as in the definition of admissibility. In particular, by inductive hypothesis, there is an $F'_1$ containing $F'_0\ssm\{v_d\}$
such that (with a suitable ordering for $F'_1$) $[F'_0,F'_1]$ is a monotone conservative path from $F_0$ to $S'$ in $\lk_{v_1}(C)$. Let $F_1=\{v_1\}\cup F'_1$ considered, for now, with $v_1$ as the first vertex, followed by the rest in the order of $F'_1$.
Then:
\begin{itemize}
\item The vector of distances of $F_1$ with respect to $S$ has the same first entry as that of $F_0$, and the rest of entries are lexicographically smaller than those of $F_0$ by inductive hypothesis. Thus, $[F_0,F_1]$ is monotone.
\item Again by inductive hypothesis, $F_{0}\ssm F_1$ is the last vertex in the ordering of $F_0$.
\end{itemize}
If $\vdist(F_1,S) = \vdist(v_1,S)$, the ordering of $F_1$ is admissible and, by inductive hypothesis, it has the maximum index among admissible orderings; hence $[F_0,F_1]$ is admissible and conservative. If $\vdist(F_1,S) < \vdist(v_1,S)$, the first vertex of any admissible ordering of~$F_1$ cannot be~$v_1$, therefore the index of this step has to be~$0$. Then choosing any reordering for $F_1$ that is admissible makes the step~$[F_0,F_1]$ conservative. It is also still monotone since the first entry of the vector of distances of~$F_1$ is then smaller than the first entry of the vector of distances of~$F_0$.
\end{proof}

Observe that the reordering of $F_1$ in the last part of the previous proof produces a change of anchor, from $v_1$ to $v'_1$. In fact, every change of anchor is produced by such a reordering. When an anchor is changed in the step from $F_0$ to $F_1$ (say from an anchor $x$ to an anchor $y$), vertex $x$ is still in the new facet $F_1$ because the vertex in $F_0\ssm F_1$ must be the last vertex in the order of $F_0$. So, the change of anchor is not due to the disappearance of $x$ from the facet, but rather to the fact that $x$ is no longer a closest vertex to $S$ in $F_1$ and a reordering is needed.

\begin{corollary}
\label{coro:path-existence}
Let $C$, $S$ and $F_0$ be as in Theorem~\ref{thm:step-existence}.
There exists a monotone conservative path towards $S$ starting at $F_0$ and ending in a facet that meets $S$.
\end{corollary}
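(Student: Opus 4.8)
The plan is to construct the path greedily by iterating Theorem~\ref{thm:step-existence}. Start with $\Gamma=[F_0]$ and, as long as the last facet $F_i$ of the path built so far satisfies $F_i\cap S=\emptyset$, invoke Theorem~\ref{thm:step-existence} to obtain an ordered facet $F_{i+1}$ adjacent to $F_i$ with $[F_i,F_{i+1}]$ monotone and conservative towards $S$, and append it. The hypotheses of Theorem~\ref{thm:step-existence} are satisfied at each stage: $F_i\cap S=\emptyset$ is precisely the condition under which we extend, and $F_i$ is admissible with respect to $S$ --- for $i=0$ by assumption, and for $i\ge1$ because the step $[F_{i-1},F_i]$ being conservative forces $F_i$ to be admissible (Definition~\ref{def:monotone-path}).

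The crucial point is that this process terminates. As long as $F_i\cap S=\emptyset$, Lemma~\ref{lem:finiteEntries} guarantees that the vector of distances $\Lambda_i$ from $F_i$ to $S$ has all entries finite, so $\Lambda_i\in\N^d$; and monotonicity of each step makes $\Lambda_0>\Lambda_1>\cdots$ a strictly lexicographically decreasing sequence. Since the lexicographic order on $\N^d$ admits no infinite strictly decreasing sequence (an immediate induction on $d$: the first coordinates are nonincreasing in $\N$, hence eventually constant, after which the remaining $(d-1)$-tuples strictly decrease), the construction cannot go on forever. Therefore at some stage the extension condition fails, i.e.\ we reach a facet $F_N$ with $F_N\cap S\ne\emptyset$.

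It remains to assemble the conclusion. The facet $F_0$ is admissible and every step $[F_i,F_{i+1}]$ is monotone and conservative towards $S$; since (as noted just after Definition~\ref{def:monotone-path}) a concatenation of monotone conservative paths sharing an endpoint is again monotone and conservative, the whole path $\Gamma=[F_0,F_1,\dots,F_N]$ is monotone and conservative towards $S$, and it ends in the facet $F_N$, which meets $S$.

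The only delicate step is the termination argument; everything else is bookkeeping, namely verifying that admissibility of $F_i$ is inherited along the path so that Theorem~\ref{thm:step-existence} keeps applying, and that the stepwise properties glue into a global one. If one preferred to avoid invoking the well-ordering of $\N^d$, one could instead argue quantitatively: the first coordinate $\vdist(F_i,S)$ decreases only finitely often, and between two consecutive decreases the anchor is fixed, say equal to $x$, and the truncated vectors trace a monotone conservative path in $\lk_C(x)$, which is finite by induction on the dimension; but for a mere existence statement the well-ordering argument is the most economical.
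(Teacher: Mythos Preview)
Your proof is correct and follows the same approach as the paper's own argument: iterate Theorem~\ref{thm:step-existence} until the current facet meets $S$. The paper's proof is extremely terse (essentially ``either $F_0$ meets $S$, or apply the theorem to get $F_1$''), leaving termination implicit; your explicit justification via the well-foundedness of the lexicographic order on $\N^d$, together with Lemma~\ref{lem:finiteEntries} to guarantee finiteness of the entries, is a welcome addition rather than a different route.
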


\begin{proof}
If $F_0$ meets $S$ then $F_0$ alone is the desired path. If $F_0\cap S=\emptyset$ then we apply  Theorem~\ref{thm:step-existence}, which gives us an $F_1$ to start the path. 
\end{proof}

\begin{corollary}
\label{coro:path-existence-2}
Let $C$, $S$ and $F_0$ be as in Theorem~\ref{thm:step-existence}.
If $S$ is the vertex set of a facet, then there is a monotone conservative path towards $S$ starting at~$F_0$ and with $S$ as its last facet.
\end{corollary}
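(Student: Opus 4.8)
The plan is to prove the statement by induction on $d$, where $C$ is a $(d-1)$-complex. The base case $d=1$ is immediate: $F_0=\{v\}$ and $S=\{w\}=\operatorname{vertices}(F_2)$ are singletons; if $v=w$ the one-facet path $[F_0]$ works, and if $v\neq w$ then $[\{v\},\{w\}]$ is monotone (its vectors of distances towards $S$ are $(1)$ then $(0)$) and conservative.

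For $d\geq 2$ I would first reduce to the case where $F_0$ already meets $S$. By Corollary~\ref{coro:path-existence} there is a monotone conservative path $\Gamma_1$ towards $S$ from $F_0$ to an ordered facet $F$ with $F\cap S\neq\emptyset$, and $F$ comes with an admissible ordering since conservativeness of $\Gamma_1$ forces every facet on it to be admissible; if $F_0$ itself meets $S$, take $\Gamma_1=[F_0]$ and $F=F_0$. By the concatenation property of monotone conservative paths it then suffices to produce a monotone conservative path $\Gamma_2$ from $F$, in its given order, to $F_2$, and glue it after $\Gamma_1$.

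To build $\Gamma_2$: since $F$ is admissible towards $S$ and $\vdist(F,S)=0$, its anchor $v_1$ satisfies $\vdist(v_1,S)=0$, i.e.\ $v_1\in S=\operatorname{vertices}(F_2)$, so $v_1$ is a vertex of $F_2$. Hence $F_2\ssm v_1$ is a facet of the pure normal $(d-2)$-complex $\lk_C(v_1)$ (purity and normality of a link were noted in the proof of Lemma~\ref{lem:finiteEntries}), and, since $v_1\in S$, the derived target set of Definition~\ref{def:ordered-facet} is $S'=S\cap\lk_C(v_1)=\operatorname{vertices}(F_2)\ssm\{v_1\}=\operatorname{vertices}(F_2\ssm v_1)$, again the vertex set of a facet. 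Moreover $F\ssm v_1$, with the order induced from $F$, is admissible towards $S'$ in $\lk_C(v_1)$, which is exactly what admissibility of $F$ towards $S$ asserts. Applying the induction hypothesis to $(\lk_C(v_1),S',F\ssm v_1)$ produces a monotone conservative path $\Gamma_2'$ in $\lk_C(v_1)$ from $F\ssm v_1$ to $F_2\ssm v_1$. Let $\Gamma_2$ be the sequence of ordered facets of $C$ obtained by prepending $v_1$ as anchor to each facet of $\Gamma_2'$; each resulting set is a facet of $C$ because $C$ is pure of dimension $d-1$, $\Gamma_2$ starts at $F$, its last facet has vertex set $\{v_1\}\cup\operatorname{vertices}(F_2\ssm v_1)=S$, and two consecutive facets of $\Gamma_2$ are dual-adjacent in $C$ exactly when the corresponding ones are in $\lk_C(v_1)$. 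This construction is inverse to the passage from a fixed-anchor subpath of a monotone conservative path to a path in a link, recalled just before Theorem~\ref{thm:step-existence}.

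It remains, and this is the only point I expect to require care, to check that $\Gamma_2$ is monotone and conservative towards $S$. Monotonicity is routine: by Definition~\ref{def:ordered-facet} the vector of distances towards $S$ of a facet $(v_1,G')$ of $\Gamma_2$ equals $(0,\Lambda')$, with $\Lambda'$ the vector of distances of $G'$ towards $S'$ in $\lk_C(v_1)$, so the lexicographic decrease along $\Gamma_2$ follows from that along $\Gamma_2'$; the same computation shows each such $(v_1,G')$ is admissible towards $S$. For conservativeness, at each step the vertex removed along $\Gamma_2$ is the one removed along $\Gamma_2'$, hence still last in the order; and for the maximality of the index one observes that an admissible reordering of a facet $\{v_1\}\cup G'$ whose anchor is not $v_1$ already disagrees with the preceding facet of $\Gamma_2$ at the anchor, so its step has the smallest possible index, whereas keeping $v_1$ as anchor makes the step have a strictly larger index, which is moreover maximized, among orderings keeping $v_1$ as anchor, by the one inherited from $\Gamma_2'$, by conservativeness of $\Gamma_2'$. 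Concatenating $\Gamma_1$ with $\Gamma_2$ then yields the desired monotone conservative path from $F_0$ to $F_2$ with $S$ as its last facet.
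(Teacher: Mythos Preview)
Your proof is correct and follows essentially the same approach as the paper's: reduce via Corollary~\ref{coro:path-existence} to the case $F_0\cap S\neq\emptyset$, observe that admissibility forces the anchor $v_1\in S$, apply induction in $\lk_C(v_1)$ with target $S'=S\ssm\{v_1\}$, and prepend $v_1$ to every facet of the resulting path. Your write-up is considerably more careful than the paper's terse version, in particular in verifying that prepending $v_1$ preserves monotonicity and conservativeness (including the maximality-of-index argument), which the paper leaves implicit.
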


\begin{proof}
By the previous corollary, there is no loss of generality in assuming $F_0\cap S\ne \emptyset$.
That is, $v_1\in S$. Now, $S':=S\ssm \{v_1\}$ is a facet in $\lk_{C}(v_1)$. 
By induction on the dimension, there is a monotone path in $\lk_{C}(v_1)$ from $F'_0:=F_0\ssm \{v_1\}$ which is conservative towards $S':=S\ssm \{v_1\}$ and ending in~$S'$.
Adding $v_1$ as the first vertex in all  facets of that path gives the desired path from $F_0$ to~$S$.
\end{proof}

Theorem~\ref{thm:step-existence} also has the following consequence:

\begin{proposition}
\label{prop:pseudomanifold}
In a pseudomanifold (with or without boundary), a conservative path is automatically monotone.
\end{proposition}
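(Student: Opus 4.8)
\emph{Sketch of the argument I would give.} The plan is to reduce to a single step and induct on the dimension, following the inductive pattern of the proof of Theorem~\ref{thm:step-existence}. A dual path is monotone towards $S$ exactly when each of its steps is, and along a conservative path every facet is admissible; so it suffices to show that one conservative step $[F_0,F_1]$ towards $S$, with $F_0$ admissible, is monotone. Write $F_0=(v_1,\dots,v_d)$. The first clause of conservativeness forces $F_0\ssm F_1=\{v_d\}$, so $\{v_1,\dots,v_{d-1}\}\subseteq F_1$; in particular $v_1\in F_1$. Since $F_0$ is admissible, $\vdist_C(v_1,S)=\vdist_C(F_0,S)$, and hence $\vdist_C(F_1,S)\le\vdist_C(v_1,S)=\vdist_C(F_0,S)$. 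As $F_0$ and $F_1$ are both admissible, the first entries of their vectors of distances $\Lambda(F_0)$ and $\Lambda(F_1)$ are $\vdist_C(F_0,S)$ and $\vdist_C(F_1,S)$ respectively.

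Next I would split into two cases. If $\vdist_C(F_1,S)<\vdist_C(F_0,S)$, then $\Lambda(F_1)$ is already lexicographically smaller than $\Lambda(F_0)$ on the first coordinate and the step is monotone. Otherwise $\vdist_C(F_1,S)=\vdist_C(v_1,S)$, so $v_1$ is a vertex of $F_1$ realizing $\vdist_C(F_1,S)$; hence there is an admissible reordering of the set $F_1$ with first vertex $v_1$, and any such reordering agrees with $F_0$ in position $1$, so has index $\ge1$. By the maximality clause of conservativeness the index of $F_1$ is then $\ge1$ too, i.e.\ $F_0$ and $F_1$ share the anchor $v_1$; in particular $\Lambda(F_0)$ and $\Lambda(F_1)$ have the same first entry, and only the tails remain to be compared.

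For the tails I would pass to the link $L:=\lk_C(v_1)$. It is again pure, normal and a pseudomanifold (purity and normality as already observed; pseudomanifoldness because a ridge $R$ of $L$ corresponds to the ridge $R\sqcup\{v_1\}$ of $C$, which lies in one or two facets of $C$, so $R$ lies in one or two facets of $L$), and it has dimension $d-2$. Let $S'$ be the target set in $L$ derived from $S$ as in Definition~\ref{def:ordered-facet}; it is the same whether computed from $F_0$ or from $F_1$, since they share the anchor $v_1$. I would then check that $[F_0\ssm v_1,\,F_1\ssm v_1]$ is a conservative step towards $S'$ in $L$: the removed vertex $(F_0\ssm v_1)\ssm(F_1\ssm v_1)=\{v_d\}$ is still the last vertex of $F_0\ssm v_1$; admissibility of $F_1$ with anchor $v_1$ is, by definition, admissibility of $F_1\ssm v_1$ with respect to $S'$ in $L$; and the admissible reorderings of the set $F_1$ of index $\ge1$ are exactly ``$v_1$ followed by an admissible reordering of $F_1\ssm v_1$ in $L$'', whose index relative to $F_0$ is one more than that of the corresponding reordering of $F_1\ssm v_1$ relative to $F_0\ssm v_1$, so maximality of the index of $F_1$ in $C$ gives maximality of that of $F_1\ssm v_1$ in $L$. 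By the inductive hypothesis this deleted step is monotone towards $S'$ in $L$, i.e.\ the tail $(\lambda_2,\dots,\lambda_d)$ decreases lexicographically; together with equality of the first entries this gives $\Lambda(F_1)<_{\mathrm{lex}}\Lambda(F_0)$.

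The delicate point will be the last verification — carrying the two conservativeness conditions for the step in $C$ over to the deleted step in $L$, especially the maximal-index condition, which requires matching admissible reorderings of $F_1$ in $C$ with admissible reorderings of $F_1\ssm v_1$ in $L$ and tracking how the index changes when the anchor is deleted. The pseudomanifold hypothesis is used only to make the induction close up, namely to ensure that $L$ is again a pseudomanifold; without it the statement fails, as the conservative but non-monotone step exhibited in Example~\ref{ex:non_ex} shows. Alternatively, one can route the argument through Theorem~\ref{thm:step-existence}: it supplies a monotone conservative step $[F_0,F_1']$, and since a conservative step deletes the last vertex $v_d$ of $F_0$, the ridge $F_0\ssm\{v_d\}$ is contained in both $F_1$ and $F_1'$, hence $F_1=F_1'$ as sets by the pseudomanifold property, which reduces the question to comparing the orderings selected by the maximal-index rule.
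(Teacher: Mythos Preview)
Your sketch contains two approaches. The ``alternative'' you describe at the end --- produce a monotone conservative step $[F_0,F_1']$ via Theorem~\ref{thm:step-existence}, use the pseudomanifold hypothesis on the ridge $F_0\ssm\{v_d\}$ to force $F_1=F_1'$ as sets, then compare the two orderings through the maximal-index rule --- is exactly the paper's argument (preceded, as in the paper, by passing to the link of $v_1$ when $v_1\in S$ to reduce to $F_0\cap S=\emptyset$). So that part is correct and matches the paper.

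Your primary approach, splitting into ``first entry of $\Lambda$ drops'' (Case~1) versus ``anchor is kept, recurse into $\lk_C(v_1)$'' (Case~2), is appealing but has a gap as written: you never supply a base case for the induction on $d$, and your assertion that ``the pseudomanifold hypothesis is used only to ensure that $L$ is again a pseudomanifold'' is precisely where it bites. Links of pseudomanifolds are pseudomanifolds, yes, but that alone does not start the induction: in a $0$-dimensional pseudomanifold $\{a,b\}$ the step $[(a),(b)]$ is always conservative yet not monotone when $a\in S$, so the proposition is not available as an inductive hypothesis at $d=1$. To close the recursion you must argue, at the bottom, that the unique ``other'' facet across the ridge $F_0\ssm\{v_d\}$ is the one making the last coordinate drop --- and that is nothing other than the Theorem~\ref{thm:step-existence}-plus-pseudomanifold argument you called the alternative. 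So your main route needs the alternative at its base, and the two collapse into the paper's proof.
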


\begin{proof}
Let $[F_0,F_1]$ be a conservative step in a pseudomanifold.
If $F_0\cap S\ne \emptyset$, then we induct on the dimension, by taking the link of the first vertex $v_1$ of $F_0$, which is still a pseudomanifold.
So we can assume that $F_0\cap S = \emptyset$.

By Corollary~\ref{coro:path-existence-2}, from $F_0$ there is a monotone and conservative step to a certain facet $F'_1$.
Now, since the definition of conservativeness determines which vertex of $F_0$ is to be removed, and since we are in a pseudomanifold, we have $F_1=F'_1$ up to reordering.
Being conservative implies that the first vertex in which the orderings of $F_1$ and $F'_1$ differ is also the first vertex in which they differ from $F_0$, and that the corresponding entry in the vector of distances of $F_1$ and $F'_1$ is smaller than the same entry in the vector of distances of $F_0$.
Hence, the step $[F_0,F_1]$ is monotone.
\end{proof}

\subsection{Combinatorial segments as monotone conservative paths}

We now relate these concepts with the notion of combinatorial segment by Adiprasito and Benedetti~\cite{AdiprasitoBenedetti}.
The following definition is taken from~\cite{Santos-progress}.

\begin{definition} 
\label{definition:combinatorial-segment}
A dual path~$\Gamma=[F_0,\dots,F_N]$ with~$x_0\in F_0$ in a simplicial complex $C$
is a~\defn{combinatorial segment from a facet~$F$ to a set~$S$ anchored at the vertex~$x_0$} if it satisfies the following recursive definition:

\begin{enumerate}[1)]
 \item if~$F_0\cap S\neq\emptyset$, then~$N=0$,
 \item if~$d=1$ and~$F_0\cap S=\emptyset$, then~$N=1$ and~$\Gamma=(\{x_0\},\{v\})$ with~$v\in S$,
 \item if~$d>1$ and~$F_0\cap S=\emptyset$, then:
	 \begin{enumerate}[a)]
          \item the facet~$F_N$ is the unique facet of~$\Gamma$ intersecting~$S$,

          \item let~$\ell=\vdist_C(F_0,S)$, let~$k=\min\{i\in[N]|\vdist_C(F_i,S)<\ell\}$ and let~$y$ be the unique vertex in~$F_k$ such that~$\vdist_C(\{y\},S)=\ell-1$. Then~$x_0\in F_0\cap\dots\cap F_k$ and the link~$\Gamma'_1$ of~$x_0$ in the path~$\Gamma_1=(F_0,F_1,\dots,F_k)$ is a combinatorial segment in~$\lk_C(x_0)$ from the facet~$F_0\ssm x_0$ to the set of neighbors of~$x_0$ being at vertex-distance~$\ell-1$ from~$S$ in~$C$ (that is the set~$S'$ of Definition~\ref{def:ordered-facet} of admissibility),

          \item the path~$\Gamma_2=[F_k,\dots,F_N]$ is a combinatorial segment from~$F_k$ to~$S$ anchored at~$y$.
         \end{enumerate}
\end{enumerate}
\end{definition}

In~\cite{Santos-progress}, the anchor of the path~$\Gamma$ is the vertex~$x_0$.
In our definition $x_0$ is only the ``first anchor'' of the path, after which come all the anchors in the path $\Gamma_2$.
Anchors, in the sense of~\cite{Santos-progress}, are called \emph{pearls} in~\cite{AdiprasitoBenedetti}.

Observe that the facets of a combinatorial segment come with an implicit ordering of their vertices. For the facets in $\Gamma_1$, except the last one $F_k$, $x_0$ is the first vertex in the ordering and the rest of the ordering is obtained by induction on the dimension. For facets in $\Gamma_2$, the ordering is obtained by induction on $N$ (or, alternatively, on the vertex-distance from $F_0$ to $S$).
These orderings make combinatorial segments and monotone conservative paths essentially equivalent:
 
\begin{theorem}
\label{thm:equivalenceDefinitions}
Let~$C$ be a simplicial complex, $S$ be a nonempty set of vertices of~$C$, $\Gamma=[F_0,\dots,F_N]$ be a dual path in~$C$ in which $F_N$ is the only facet intersecting $S$.
The path~$\Gamma$ is a combinatorial segment from $F_0$ to $S$ if and only if it is monotone and conservative towards $S$.
\end{theorem}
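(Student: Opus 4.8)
The plan is to prove both implications at once by a single induction, lexicographic on the pair $(\dim C, N)$, running in parallel through the two recursive descriptions: Definition~\ref{definition:combinatorial-segment} on one side, and on the other the decomposition of a monotone conservative path along its anchors explained after Theorem~\ref{thm:step-existence}. The base cases are immediate. If $F_0\cap S\neq\emptyset$ then $N=0$ for both notions, and one checks that the ordering implicitly carried by a length-$0$ combinatorial segment anchored at $x_0$ is precisely an admissible ordering of $F_0$ with respect to $S$: both force $\vdist_C(x_0,S)=0$, hence $x_0\in S$, and then run the same recursion in $\lk_C(x_0)$ with target $S\cap\lk_C(x_0)$. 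If $\dim C=0$ the only options are a single facet meeting $S$ or a single step to a vertex of $S$, and there is nothing to check. So from now on $\dim C\ge 1$, $F_0\cap S=\emptyset$, and $\ell:=\vdist_C(F_0,S)$ is a positive integer (finite because a dual path eventually reaches $S$).

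Assume first that $\Gamma$ is monotone and conservative towards $S$, with $F_N$ its only facet meeting $S$; let $x_0$ be the anchor of $F_0$, so $\vdist_C(x_0,S)=\ell$ by admissibility. The facets of $\Gamma$ with anchor $x_0$ form an initial subpath $\Gamma_1=[F_0,\dots,F_k]$, and the two key points are that $k=\min\{i:\vdist_C(F_i,S)<\ell\}$ and that $x_0\in F_0\cap\dots\cap F_k$. For $i<k$, admissibility with anchor $x_0$ forces $\vdist_C(F_i,S)=\vdist_C(x_0,S)=\ell$; and $x_0$, being first and (since $\dim C\ge1$) not last in the ordering of $F_i$, is never the vertex removed by a conservative step, hence survives into each $F_i$ with $i\le k$. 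The step into $F_k$ alters only the last vertex of $F_{k-1}$ and then reorders; its new vertex $y$ is a neighbour of $x_0$, hence at distance $\ge\ell-1$ from $S$, while all other vertices of $F_k$ are at distance $\ge\ell$, so $\vdist_C(F_k,S)<\ell$ forces $\vdist_C(y,S)=\ell-1$, makes $y$ the unique such vertex of $F_k$, and makes $y$ the new anchor. Deleting $x_0$ from $\Gamma_1$ then yields a monotone conservative path $\Gamma_1'$ in $\lk_C(x_0)$ towards the set $S'$ of Definition~\ref{def:ordered-facet}, whose last facet $F_k\setminus x_0$ is its only facet meeting $S'$ (for $i<k$, a vertex of $F_i\setminus x_0$ in $S'$ would give $\vdist_C(F_i,S)\le\ell-1$), while $\Gamma_2:=[F_k,\dots,F_N]$ is a monotone conservative path towards $S$ with $F_N$ its only facet meeting $S$. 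As $\Gamma_1'$ drops the dimension and $\Gamma_2$ has length $N-k<N$, the induction hypothesis rewrites both as combinatorial segments, and reading off item~3 of Definition~\ref{definition:combinatorial-segment} shows $\Gamma$ is a combinatorial segment from $F_0$ to $S$ anchored at $x_0$.

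The reverse implication uses the same skeleton. Given that $\Gamma$ is a combinatorial segment from $F_0$ to $S$ anchored at $x_0$, Definition~\ref{definition:combinatorial-segment} provides $\Gamma_1$, $\Gamma_1'$, $\Gamma_2$, $k$ and $y$ with their stated properties, the induction hypothesis makes $\Gamma_1'$ and $\Gamma_2$ monotone conservative (towards $S'$ and towards $S$, respectively), and it remains to verify that $\Gamma=\Gamma_1\cdot\Gamma_2$ is monotone and conservative towards $S$. Monotonicity is clear: along $\Gamma_1$ the leading coordinate of the vector of distances is the constant $\ell$ while the remaining coordinates are the vectors of distances of $\Gamma_1'$ towards $S'$, hence lexicographically decreasing; the step into $F_k$ lowers the leading coordinate to $\ell-1$; and $\Gamma_2$ is monotone. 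For conservativeness, a step inside $\Gamma_1$ is obtained from a conservative step of $\Gamma_1'$ by fixing $x_0$ in front, which keeps the removed vertex last and gives it larger index than any admissible reordering with a different anchor, the maximality among those fixing $x_0$ being inherited from $\Gamma_1'$; the step into $F_k$ removes the last vertex of $F_{k-1}$ (it does so already in $\Gamma_1'$), and every admissible reordering of $F_k$ is anchored at the unique distance-$(\ell-1)$ vertex $y$, so the one coming from $\Gamma_2$ has the maximal (in fact the only possible) index; steps beyond $F_k$ are conservative because $\Gamma_2$ is. Finally $F_0$ is admissible since $x_0$ realizes $\vdist_C(F_0,S)$ and $F_0\setminus x_0$ is admissible with respect to $S'$ because $\Gamma_1'$ is.

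The part I expect to cost the most effort is the middle paragraph: pinning down that the recursion depth $k$ in Definition~\ref{definition:combinatorial-segment} coincides with the index at which the anchor of a monotone conservative path changes, that the target $S'$ built in Definition~\ref{def:ordered-facet} is exactly the one appearing in item~3b, and that the orderings implicitly attached to a combinatorial segment agree with those singled out by admissibility together with the maximal-index rule. Once that dictionary between the global recursion and the local, ordering-based description is in place, every monotonicity and conservativeness check is a routine unwinding of the two definitions in parallel.
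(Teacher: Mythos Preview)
Your proof is correct and follows essentially the same double induction (on $\dim C$ and on $N$) and the same decomposition $\Gamma=\Gamma_1\cdot\Gamma_2$ along the first anchor as the paper's own argument. One point you gloss over that the paper makes explicit: in the backward direction, when you delete $x_0$ to form $\Gamma_1'$, the ordering on the last facet $F_k\ssm x_0$ cannot be inherited from $\Gamma$ (since $x_0$ is no longer first in $F_k$), so you must \emph{choose} for it an admissible ordering with respect to $S'$ of highest index, and observe that this still makes the last step of $\Gamma_1'$ monotone (because $y\in S'$ forces the leading entry to drop to $0$) and conservative.
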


\begin{proof}
 We make the proof follow the inductive definition of combinatorial segment. Observe that it has a double induction, first on the dimension and then on $N$ (or, alternatively, on $\vdist(F_0,S)$).
 
 In case (1) of the definition, the result trivially holds. In case (2) observe that for $d=1$ the ``vector of distances'' between a vertex $F$ and a set $S$ is just a number, zero if $F\in S$ and one if not. Since monotonicity implies these numbers to be decreasing, a monotone path must have $N=1$ and be of the form $[\{x_0\},\{v\}]$ with~$v\in S$.
  
 So suppose that~$F_0\cap S=\emptyset$ and that~$d>1$. Assume for now that~$\Gamma$ is a combinatorial segment from~$F_0$ to~$S$. Then we already know that the path~$\Gamma_2$ of the definition is also a combinatorial segment from~$F_k$ to~$S$, implying by induction that any step of this path is both monotone and conservative towards~$S$.
 By the induction hypothesis, since the path~$\Gamma'_1$ of the definition is a combinatorial segment, it is monotone and conservative towards the set~$S'$ of Definition~\ref{defi:vector-of-distances}. The vectors of distances in the path~$\Gamma_1$ are all, except for the one in the last facet $F_k$, obtained from those in the path~$\Gamma'_1$ by adding the distance of the anchor~$x_0$ as first coordinate, so that~$\Gamma_1$ is still monotone. 
 Moreover, the dual graph of~$\lk_C(x_0)$ is the same as that of~$\st_C(x_0)$, so that conservativeness is also preserved from~$\Gamma_1$ to~$\Gamma'_1$ and conversely, except perhaps for the last step. Monotonicity in the step $[F_{k-1},F_k]$ is clear, since $\vdist(y,S) < \vdist(x_0,S)$. Let us check the conditions for conservativeness:
 \begin{itemize}
 \item The vertex in $F_{k-1}\ssm F_k$ is the last vertex of $F_{k-1}$ because it is the last vertex in $F'_{k-1}:=F_{k-1}\ssm \{x_0\}$ (induction on the dimension).
 \item The ordering in $F_k$ is admissible and has maximal index among admissible ones because $\Gamma_2$ is monotone and conservative, by induction on $N$.
 \end{itemize}
 
 Since $\Gamma_2$ is also monotone and conservative, the path $\Gamma$ is monotone and conservative.
 
 Conversely, assume now that~$\Gamma$ is monotone and conservative towards~$S$. Let~$x_0$ be the first anchor of~$\Gamma$ and~$[F_0,\dots,F_{k-1}]$ be the part of $\Gamma$ anchored at $x_0$ and let
 $\Gamma_1=[F_0,\dots,F_{k}]$. Observe that, by conservativeness, $F_k$ still contains $x_0$. Moreover, 
 the link $\Gamma'_1$ of~$x_0$ in~$\Gamma_1$ is monotone and conservative towards~$S'$ except for the last step. In this last step, the order on~$F_k$ in~$\Gamma$ cannot be restricted to an ordering of $F'_k:=F_k\ssm \{x_0\}$ in $\lk_{C}(x_0)$, since $x_0$ is not the first vertex in $F_k$. 
Nevertheless the same arguments as above still apply and any admissible ordering on~$F_k$ with respect to~$S'$ with highest index guarantees  monotonicity and conservativeness for the step~$[F_{k-1},F_k]$. 
So~$\Gamma'_1$ is a combinatorial segment to~$S'$ by the induction hypothesis.
Moreover, the path~$\Gamma_2=[F_k,\dots,F_N]$ is a subpath of~$\Gamma$ and so is monotone and conservative towards~$S$, and so is a combinatorial segment from~$F_k$ to~$S$ by induction on $N$. Thus~$\Gamma$ is a combinatorial segment from~$F_0$ to~$S$ in~$C$.
\end{proof}

Adiprasito and Benedetti also define a combinatorial segment ``between two facets'' as follows:
Let $F,F'$ be two facets of $C$ and let $\Gamma$ be a combinatorial segment from the facet $F$ to the \emph{set of vertices} $F'$. Then, only the last facet $F''$ in $\Gamma$ intersects $F'$, and $F''$ and $F'$ intersect in a single vertex~$x$. Since $F''\ssm x$ and $F'\ssm x$ are both facets in $\lk_C(x)$ we can recursively define a combinatorial segment from the facet $F$ to the facet $F'$ to be $\Gamma$ followed by $x*\Gamma'$, where $\Gamma'$ is a combinatorial segment in $\lk_C(x)$ from the facet $F''\ssm x$ to the facet $F'\ssm x$.

The following result follows easily:

\begin{corollary}
Let $F,F'$ be facets in a normal complex $C$.
A combinatorial segment from~$F$ to~$F'$ is a monotone and conservative path that starts in~$F$ towards the target set~$F'$ and finishes in~$F'$.
\end{corollary}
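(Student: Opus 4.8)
The plan is to unwind Adiprasito--Benedetti's recursive definition of a combinatorial segment \emph{between two facets} and reduce it to Theorem~\ref{thm:equivalenceDefinitions} (the case of a target \emph{set}) together with a transfer statement for the operation ``prepend a vertex'', proceeding by induction on $\dim C$. If $F=F'$, the combinatorial segment is $[F]$ and there is nothing to prove, so assume $F\ne F'$. Write the combinatorial segment as $\Gamma\cdot(x\ast\Gamma')$, where $\Gamma$ is a combinatorial segment from the facet $F$ to the \emph{set of vertices} $F'$, where $F''$ is its last (and only) facet meeting $F'$, where $x$ is the vertex of $F''\cap F'$ used in the recursion, and where $\Gamma'$ is a combinatorial segment in $\lk_C(x)$ from the facet $F''\ssm x$ to the facet $F'\ssm x$. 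By Theorem~\ref{thm:equivalenceDefinitions}, $\Gamma$ is monotone and conservative towards the set $F'$; since it is monotone and $\vdist(F'',F')=0$, the anchor of $F''$ in $\Gamma$ realizes distance $0$ to $F'$, hence lies in $F''\cap F'$, and in the generic case $F\cap F'=\emptyset$ a conservative step changes a single vertex, so $|F''\cap F'|=1$ and this anchor is exactly $x$. (The degenerate situations where $F\cap F'\ne\emptyset$ are handled directly by Corollaries~\ref{coro:path-existence} and~\ref{coro:path-existence-2}, recursing on a common vertex.)

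The core of the argument is the observation --- already implicit in the paragraph preceding Theorem~\ref{thm:step-existence} and in the proof of Corollary~\ref{coro:path-existence-2} --- that, since $x\in F'$, applying Definition~\ref{def:ordered-facet} to $S=F'$ with first vertex $x$ yields $S'=F'\cap\lk_C(x)=F'\ssm x$. Consequently, for any facet $G$ of $\lk_C(x)$ ordered admissibly towards $F'\ssm x$, the facet $x\ast G$ (with $x$ placed first) is ordered admissibly towards $F'$, its vector of distances towards $F'$ is $(0,\Lambda_G)$ where $\Lambda_G$ is the vector of distances of $G$ towards $F'\ssm x$, and the dual graph of $\st_C(x)=\lk_C(x)\ast\{x\}$ coincides with that of $\lk_C(x)$. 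From this one checks, step by step, that $x\ast(-)$ turns a monotone conservative path towards $F'\ssm x$ in $\lk_C(x)$ into a monotone conservative path towards $F'$ in $C$: monotonicity is immediate because the first coordinate of every distance vector is $0$; the ``removed vertex is last'' condition transfers verbatim; and for the maximal-index condition one notes that an admissible reordering of $\{x\}\cup G$ towards $F'$ either keeps $x$ first --- in which case its index in the step is $1$ plus that of the corresponding reordering of $G$ --- or does not, in which case it already disagrees with the previous facet in position $1$ and has index $0$, so maximality is inherited from $\Gamma'$. Applying this to $\Gamma'$, which is monotone and conservative towards $F'\ssm x$ by the inductive hypothesis ($\lk_C(x)$ being pure and normal of dimension $d-2$), shows that $x\ast\Gamma'$ is a monotone conservative path towards $F'$ starting at $x\ast(F''\ssm x)=F''$ and, by induction, ending at $\{x\}\cup(F'\ssm x)=F'$.

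It then remains to glue the two pieces. The bridging facet $F''$ carries an ordering coming from $\Gamma$ (anchor $x$, then some admissible order of $F''\ssm x$ towards $F'\ssm x$) and the one coming from $\Gamma'$; I would use the latter throughout and check that $\Gamma$ stays monotone and conservative after this reordering of its last facet. This is easy: $F''$ is the only facet of $\Gamma$ meeting $F'$, so its distance vector has first coordinate $0$, strictly below that of the preceding facet, giving monotonicity, and the only admissible orderings of $F''$ have index $0$ in the last step of $\Gamma$ regardless of the chosen ordering, so conservativeness persists. Finally, a concatenation of two monotone conservative paths towards the same target set that share an endpoint is again monotone and conservative (as noted after Definition~\ref{def:monotone-path}), so $\Gamma\cdot(x\ast\Gamma')$ is monotone and conservative towards $F'$, starts at $F$, and ends at $F'$.

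I expect the main obstacle to be the bookkeeping around \emph{conservativeness}, and in particular verifying that prepending the forced first vertex $x$ does not create admissible reorderings of higher index than intended: inside $\st_C(x)$ every vertex of $F'$ is at distance $0$, so several vertices can a priori compete for the anchor, and one must rule out that a competing anchor yields a longer agreement with the previous facet. The secondary nuisance is reconciling the two orderings of the bridge facet $F''$ so that the concatenation is literally a path of \emph{ordered} facets. Both points are routine but must be spelled out carefully, precisely because the whole purpose of the local formulation is that the orderings carry the recursion.
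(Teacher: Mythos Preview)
Your argument is correct and follows the same route as the paper's proof: apply Theorem~\ref{thm:equivalenceDefinitions} to the initial segment $\Gamma$, use induction on dimension for $\Gamma'$ in $\lk_C(x)$, prepend $x$, and concatenate. The paper compresses all of this into three lines (``The result is clearly monotone and conservative''), whereas you actually carry out the verification of conservativeness under $x\ast(-)$ and the reconciliation of orderings at the bridge facet $F''$; these details are exactly what the paper omits. One cosmetic slip: in your maximal-index check you write ``index $0$'' for an admissible reordering of $\{x\}\cup G_i$ that does not begin with $x$, but under Definition~\ref{def:monotone-path} the index is the first position of disagreement and hence is $1$; the inequality $1<1+k'$ you need still holds. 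Also, the degenerate case $F\cap F'\ne\emptyset$ is handled not by Corollaries~\ref{coro:path-existence}--\ref{coro:path-existence-2} (which are existence statements) but simply by noting that then $\Gamma=[F]$ and the whole segment is already of the form $x\ast\Gamma'$.
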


\begin{proof}
	By Theorem~\ref{thm:equivalenceDefinitions}, the combinatorial segment starting at $F$ and ending at the first facet intersecting $F'$ at the vertex $x$ is monotone and conservative towards the vertex set of $F'$. By induction on the dimension, we join $x$ to the monotone and conservative path in the link of $x$ and concatenate it to the previous path. The result is clearly monotone and conservative.
\end{proof}

\subsection{Joins and one-point-suspensions}

Let $\maxlength(C)$ denote the maximum length of monotone conservative paths in a pure normal complex $C$.

\begin{proposition}
 The function $\maxlength$ has the following properties.
 \begin{itemize}
  \item[(i)] $\maxlength(C_1\join C_2)=\maxlength(C_1)+\maxlength(C_2)$
  \item[(ii)] $\maxlength(\ops_C(v))\in\{\maxlength(C),\maxlength(C)+1\}$
 \end{itemize}
\end{proposition}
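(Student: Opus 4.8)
For part (i), the plan is to relate monotone conservative paths in $C_1 \join C_2$ to those in $C_1$ and $C_2$ separately. A facet of $C_1 \join C_2$ is a disjoint union $F = F^{(1)} \sqcup F^{(2)}$ with $F^{(j)}$ a facet of $C_j$, and the dual graph of the join has the property that an edge changes a vertex in exactly one of the two factors. The key geometric fact is that $\vdist_{C_1 \join C_2}(u,v)$ behaves predictably: if $u,v$ lie in the same factor $C_j$ the distance is at most $\vdist_{C_j}(u,v)$ but can be shortened to at most $2$ by routing through the other factor, while if $u,v$ lie in different factors the distance is exactly $1$. I would first record this distance computation, then analyze how admissible orderings of $F = F^{(1)} \sqcup F^{(2)}$ with respect to a target $S$ decompose. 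Taking $S$ to be a target facet $F'_1 \sqcup F'_2$, the vertices of $F$ at distance $0$ or $1$ from $S$ come first, and after peeling those off the link structure $\lk_{C_1\join C_2}(v) = \lk_{C_j}(v) \join C_{3-j}$ (for $v$ a vertex of $C_j$) lets the recursion continue factor-by-factor. The upshot I expect is that a monotone conservative path in the join, once it has "used up" its short-distance steps, must proceed by a monotone conservative path in each factor, and these can be interleaved freely (by the concatenation closure property noted before the statement of Theorem~\ref{thm:step-existence}), giving $\maxlength(C_1 \join C_2) \ge \maxlength(C_1) + \maxlength(C_2)$; conversely projecting a path in the join onto each factor yields monotone conservative paths there whose lengths sum to at least the length of the original, giving $\le$. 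Care is needed with the degenerate case $\dim C_j = 0$ and with the convention on $\vdist$ stated at the end of the preliminaries.

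For part (ii), I would use the identifications $\lk_{\ops_C(v)}(v_1) \cong \lk_{\ops_C(v)}(v_2) \cong C$ and $\lk_{\ops_C(v)}(v_1 v_2) = \lk_C(v)$ recorded in the preliminaries. The lower bound $\maxlength(\ops_C(v)) \ge \maxlength(C)$ is immediate: a monotone conservative path $\Gamma$ in $C$ towards a facet $F'$ lifts to the path $v_1 \join \Gamma$ in $\st_{\ops_C(v)}(v_1) \subseteq \ops_C(v)$ towards $\{v_1\} \sqcup F'$ (placing $v_1$ first in every ordering), and since the dual graph of $\st_{\ops_C(v)}(v_1)$ coincides with that of $\lk_{\ops_C(v)}(v_1) \cong C$, monotonicity and conservativeness are preserved exactly as in the join argument of Corollary~\ref{coro:path-existence-2}. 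For the upper bound $\maxlength(\ops_C(v)) \le \maxlength(C) + 1$, I would take a monotone conservative path $\Gamma = [F_0, \dots, F_N]$ in $\ops_C(v)$ and track the pair of suspension vertices $\{v_1, v_2\}$: every facet of $\ops_C(v)$ contains at least one of $v_1, v_2$ (since $\ops_C(v)$ is an iterated-suspension-with-a-contracted-edge, equivalently every facet comes from $\lk_C(v)\join\{v_1v_2\}$ or from $\del_C(v)\join\{v_i\}$). The facets containing the edge $v_1v_2$ are exactly those in $\lk_C(v)\join\{v_1,v_2\}$-part, i.e. the ones over $\lk_C(v)$; a step of $\Gamma$ can cross from the "$v_1$ side" to the "$v_2$ side" only by passing through a facet containing $v_1 v_2$, and I expect monotonicity to force that this crossing happens at most once. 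Then $\Gamma$ splits into an initial segment living in $\st_{\ops_C(v)}(v_1)$ (dual-isomorphic to $C$), at most one crossing step, and a final segment living in $\st_{\ops_C(v)}(v_2)$ (also dual-isomorphic to $C$); but the two segments together, transported to $C$, must concatenate into a single monotone conservative path in $C$ (they agree at the crossing facet once $v_1, v_2$ are both deleted, since $\lk_{\ops_C(v)}(v_1v_2) = \lk_C(v)$ sits inside $C$), so their lengths sum to at most $\maxlength(C)$, and adding the one crossing step gives $N \le \maxlength(C) + 1$.

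The main obstacle I anticipate is the bookkeeping around admissible orderings and the anchor vertex when the target set $S$ is not a whole facet but the derived set $S'$ of Definition~\ref{def:ordered-facet} — in the join case one must check that $S'$ itself decomposes compatibly with the two factors, and in the one-point-suspension case one must verify that the reordering needed to keep an ordering admissible never has the effect of moving $v_1$ or $v_2$ out of first position in a way that creates a second crossing. Both points should follow from the structural identities for links already quoted, together with the observation (made right after Theorem~\ref{thm:step-existence}) that a change of anchor along a monotone conservative path happens precisely when the current anchor ceases to be a closest vertex to the target; I would check that in $\ops_C(v)$ the suspension vertices are always among the closest vertices to any target containing one of them, which pins down the anchor behaviour. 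The distance-comparison lemma for joins is routine but must be stated carefully to handle the $0$-dimensional conventions.
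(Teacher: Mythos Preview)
Your strategy matches the paper's on both parts: for (i), project a path in the join to each factor (for $\le$) and interleave paths from the factors (for $\ge$); for (ii), lift into $\st_{\ops_C(v)}(v_1)$ for the lower bound and use a projection $\ops_C(v)\to C$ for the upper bound.

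There is one slip in your plan for (ii). You assert that a step crossing from the $v_1$-side to the $v_2$-side can happen ``only by passing through a facet containing $v_1v_2$''. This is false: for any facet $A$ of $\del_C(v)$, the facets $A\cup\{v_1\}$ and $A\cup\{v_2\}$ of $\ops_C(v)$ are adjacent and neither contains the edge $v_1v_2$. These direct $v_1\leftrightarrow v_2$ flips are precisely the steps that collapse under the projection $\pi$ sending $F*v_i\mapsto F$ and $F*\{v_1,v_2\}\mapsto F*v$; by contrast, steps that enter or leave a $v_1v_2$-facet survive as honest steps of $\pi(\Gamma)$ in $C$. The paper's argument is therefore to bound the number of direct flips, not of crossings through $\st(v_1v_2)$: since $v_1$ and $v_2$ are neighbors of every vertex, at a direct flip the vector of distances of the source facet is $(1,\dots,1)$; monotonicity then forces the target facet to meet $S$, and as the only new vertex is $v_2$ one gets $v_2\in S$, so $v_2$ anchors the remainder of the path and no further flip occurs. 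Your concatenation conclusion is then correct, but the two pieces of $\pi(\Gamma)$ agree at the facet $A\in\del_C(v)\subset C$, not at something coming from $\lk_{\ops_C(v)}(v_1v_2)=\lk_C(v)$.

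For (i), the bookkeeping is lighter than you fear: since every vertex of one factor is adjacent to every vertex of the other, a flip in $C_1$ never alters the distances governing the $C_2$-part of the ordering, and the identity $\lk_{C_1\join C_2}(u)=\lk_{C_j}(u)\join C_{3-j}$ keeps the join structure intact through the recursion, so the derived target set $S'$ decomposes without the careful case analysis you outline.
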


\begin{proof}
(i) It is clear that every dual path $\Gamma$ in $C_1\join C_2$ restricts as a path $\Gamma_1$ in $C_1$ and a path $\Gamma_2$ in $C_2$ so that $\length(\Gamma_1)+\length(\Gamma_2)=\length(\Gamma)$.
To prove  $\maxlength(C_1\join C_2)\leq\maxlength(C_1)+\maxlength(C_2)$ we only need to check that if $\Gamma$ is monotone and conservative, then $\Gamma_1$ and $\Gamma_2$ are also monotone and conservative.

Let $\Gamma$ be a monotone and conservative path from the facet  $F=F_1 \join F_2$ to the facet $G=G_1 \join G_2$, where $F_1,G_1$ (resp. $F_2,G_2$) are facets of $C_1$ (resp. $C_2$). 
Observe that flipping a vertex in $C_1$ does not influence the vertex-distances from vertices of a facet in $C_1$ to the facet $G_2$.
Therefore the restriction of $\Gamma$ to $C_1$ always flips according to vertex-distances within $C_1$.
Further, the flip does not depend on the position of the anchor, i.e., it occurs either in $C_1$ or in $\lk_{C_1}(v)$, which respects the definition of vector of distances in $C_1$. Therefore, the restriction of $\Gamma$ to $C_1$ (and symmetrically to $C_2$) forms a monotone and conservative path. 
 
Conversely, taking monotone and conservative paths in $C_1$ and $C_2$ determines a monotone and conservative path in $C_1\join C_2$ by shuffling the consecutive subsegments depending on the minimal vertex-distances of the anchors. Therefore $\maxlength(C_1)+\maxlength(C_2)\leq\maxlength(C_1\join C_2)$, which concludes.
 
(ii) Since $C$ is isomorphic to the link of $v_1$ in $\ops_C(v)$, we have $\maxlength(\ops_C(v)) \ge \maxlength(C)$. So, we only need to check that $\maxlength(\ops_C(v)) \le \maxlength(C) +1$.
Let $\Gamma$ be a longest monotone conservative path in the one-point-suspension $\ops_C(v)$ and let $\Gamma'$ be the path restricted to $C$ in the following sense: facets of $\ops_C(v)$ of the type $F*v_1$ or $F*v_2$ are sent to $F$, facets $F*\{v_1,v_2\}$ are sent to $F*v$.
If one of $v_1$ or $v_2$ (say $v_1$) belongs to all the facets in $\Gamma$, then $\Gamma'$ is a monotone and conservative path in $\lk_{\ops_{C}(v)}(v_1)$, which is isomorphic to $C$. 
If none of $v_1$ or $v_2$ belongs to all the facets in $\Gamma$, then assume that the starting facet contains $v_1$ but not $v_2$ and the final facet contains $v_2$ but not~$v_1$.
The path $\Gamma'$ does the same flips as $\Gamma$ except for the flips that change $v_1$ to $v_2$, or vice-versa.
We need to show that there is only one of those.
Since the stars of $v_1$ and $v_2$ contain all vertices, a step $[F,G]$ flipping $v_1$ to $v_2$ (or vice-versa) occurs with a vector of distances $(1,\dots,1)$ for $F$.
Therefore, $G$ has to contain a vertex in the original target set $S$.
Since $v_2$ is the only vertex not in~$F$, we have $v_2\in S$.
Then, by the recursive definition of monotone conservative paths, the remaining part of the monotone conservative path in $\ops_C(v)$ is formed in $\lk_{\ops_C(v)}(v_2)$ which is isomorphic to $C$ and $v_2$ is contained in the rest of the monotone conservative path.
It remains to show that $\Gamma'$ is monotone and conservative.
The path $\Gamma'$ consists of two parts, the one containing~$v_1$ and the one containing $v_2$. By the argument above they are both monotone and conservative with respect to a same target set so concatenating them gives a monotone and conservative path up to the repetition of a unique facet of $C$.
\end{proof}

\section{Upper bounds for the length of monotone conservative paths}
\label{sec:upperbounds}

The motivation for the definition of combinatorial segment was the proof of the Hirsch upper bound for flag normal complexes, and it gave as a byproduct the linear bound in fixed dimension. In this section we rework these two bounds in the language of monotone conservative paths.
We also include a bound for banner complexes that interpolates between the two.

\subsection{Hirsch bound for flag normal complexes}
\label{subsection:nonrevisiting}

A path $\Gamma = [F_0,\dots, F_N]$ in a complex $C$ is said to be \defn{nonrevisiting} if 
for every vertex $v$, the facets of $\Gamma$ containing $v$ form a subpath of $\Gamma$. That is to say, if $v\in F_i\cap F_j$ then $v\in F_k$ for every $k\in[i,j]$. It is easy to show (see below) that the length of nonrevisiting paths in a pure complex cannot exceed the  Hirsch bound $n-d$. 

\begin{lemma}[{\cite[Section~3]{AdiprasitoBenedetti}, see also~\cite[Corollary~4.19]{Santos-progress}}]
\label{lem:containment_z}
Let $C$ be a flag normal complex, $x$ and $y$ be two consecutive anchors, with $x$ coming first, along a monotone conservative path~$\Gamma=[F_0,\dots,F_N]$.
If $z$ is a neighbor of $y$ that belongs to a facet $F_i$ anchored at $x$, then $z$ belongs to all facets between $F_i$ and the first one anchored at $y$.
\end{lemma}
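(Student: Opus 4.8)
The statement to prove is Lemma~\ref{lem:containment_z}: in a flag normal complex, along a monotone conservative path, if $x$ and $y$ are consecutive anchors (with $x$ first), and $z$ is a neighbor of $y$ lying in some facet $F_i$ anchored at $x$, then $z$ lies in all facets between $F_i$ and the first facet $F_k$ anchored at $y$. The plan is to argue by contradiction: suppose $z$ is dropped at some step $[F_{j-1},F_j]$ with $i \le j-1 < j \le k$, so $z \in F_{j-1}$ but $z \notin F_j$. Since all facets $F_i,\dots,F_{k-1}$ are anchored at $x$, the step $[F_{j-1},F_j]$ happens inside $\st_C(x)$ (equivalently $\lk_C(x)$, by passing to the link), so we can mostly work one dimension down, in $\lk_C(x)$, with the induced monotone conservative path and the induced target set $S'$.

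**The key mechanism.**

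Because the step $[F_{j-1}, F_j]$ is conservative, the vertex removed from $F_{j-1}$ is the \emph{last} vertex of $F_{j-1}$ in its ordering; so $z$ being removed means $z$ is the last vertex of $F_{j-1}$, hence the last vertex of $F_{j-1}\ssm\{x\}$ in $\lk_C(x)$. On the other hand, the anchor in $\lk_C(x)$ changes exactly when passing from $F_{k-1}$ to $F_k$ (that is where $y$ first becomes the closest vertex to the target), and before that the anchor of $F_{j}\ssm\{x\}$ in $\lk_C(x)$ is fixed — call it $w$ — the second vertex of each facet. The crucial point is a distance/monotonicity computation: $z$ is a neighbor of $y$, and $y$ is at strictly smaller vertex-distance to the ultimate target than $w$ is; combined with flagness, I expect to show that once $z$ appears in a facet, any admissible reordering forces $z$ to \emph{not} be the last vertex — more precisely, that $z$ must precede whatever vertex the conservative step is allowed to drop. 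The flag hypothesis enters because in a flag complex, $\{x, y, z\}$ being pairwise adjacent (x–z through $F_i$, y–z by hypothesis, x–y as consecutive anchors share a facet) forces $\{x,y,z\}$ to be a face, which is what lets us keep $z$ "available" in the link and track its distance correctly; without flagness, the triangle need not fill in and the argument collapses. So the heart of the matter is: a conservative step in the link can only drop the last vertex, and $z$'s adjacency to the next anchor $y$ keeps $z$ strictly "early" in every admissible ordering along this stretch, so it can never be the one dropped.

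**Carrying it out; the main obstacle.**

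Concretely I would: (1) reduce to $\lk_C(x)$, noting $\lk_C(x)$ is again flag and normal (flagness and normality are inherited by links), and replace $\Gamma$ restricted to the $x$-anchored part by its deletion $\Gamma'_0$, which is monotone conservative towards $S' = \{v : \vdist_C(v,x)=1,\ \vdist_C(v,S)=\vdist_C(x,S)-1\}$; (2) observe $y \in S'$ since $y$ is the next anchor, so $\vdist_{\lk_C(x)}(y, S')=0$; (3) use that $z$ is adjacent to $y$ in $C$, and $\{x,y,z\}$ is a face by flagness, so $z$ is a vertex of $\lk_C(x)$ adjacent to $y$ there, giving $\vdist_{\lk_C(x)}(z,S') \le 1$; (4) run the argument that a conservative step cannot drop a vertex at vertex-distance $\le 1$ from the target while there still is a vertex strictly farther — because admissibility would reorder to put a farthest-from-target vertex last, and conservativeness then removes \emph{that} vertex, not $z$ — iterating this down the recursion via induction on dimension. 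The main obstacle I anticipate is step (4): making precise, through the recursive/inductive structure of admissibility and the vector of distances, the claim that "$z$ is never the last vertex of any facet in this stretch." One has to be careful that $z$ might sit deep in the ordering (not necessarily second), so the induction must track $z$'s position at every level of the recursion, using at each level that $z$ is still close to the relevant derived target set $S', S'', \dots$. Handling the base case $d=2$ (where $\lk_C(x)$ is a graph and the statement is essentially immediate) and cleanly threading the double induction — on dimension and on path length — will be the bookkeeping-heavy part, but the conceptual content is entirely the interplay of conservativeness (drop the last vertex) with the flag condition (the triangle $\{x,y,z\}$ fills in, keeping $z$ available and near the target).
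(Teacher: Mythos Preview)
Your setup in steps~(1)--(3) is correct and essentially identical to the paper's: reduce to $\lk_C(x)$, where the induced path $\Gamma'$ is monotone conservative towards $S'$; observe $y\in S'$; and use flagness on the triangle $\{x,y,z\}$ to see that $z$ is a neighbor of $y$ inside $\lk_C(x)$, hence $\vdist_{\lk_C(x)}(z,S')=1$.

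The gap is in step~(4). The assertion that ``admissibility would reorder to put a farthest-from-target vertex last'' is simply false: admissibility recursively places a \emph{closest} vertex first, but says nothing direct about which vertex is last. A vertex $z$ with $\vdist(z,S')=1$ can perfectly well end up last in an admissible ordering---for instance because in the link of the current anchor, with the derived target $S''$, the vertex $z$ happens to be far from $S''$ even though it is close to $S'$. So the contradiction you aim for (``$z$ is at distance $\le 1$, some other vertex is strictly farther, hence $z$ cannot be dropped'') does not fire. Worse, it does not even cover the case where \emph{every} vertex of the current facet is at distance exactly $1$ from $S'$, which is entirely possible here. You anticipate the difficulty (``track $z$'s position at every level of the recursion, using at each level that $z$ is still close to the relevant derived target set''), but you do not supply a reason closeness persists through the derived targets $S',S'',\ldots$, and that reason is the whole content of the proof.

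The paper sidesteps this by applying the \emph{lemma itself} as the induction hypothesis inside $\lk_C(x)$. The key observation you are missing is that the first anchor $x'$ of $\Gamma'$ is at distance exactly $1$ from $S'$ (since $z\in F'_0$ witnesses distance $\le 1$, while $F'_0\cap S'=\emptyset$), and that the \emph{second} anchor of $\Gamma'$ is precisely $y$: the only new vertex entering at the first facet of $\Gamma'$ that meets $S'$ is $y$, so $F'_N\cap S'=\{y\}$. Thus $x'$ and $y$ are consecutive anchors of $\Gamma'$, and $z$ is a neighbor of $y$ in $\lk_C(x)$ by flagness, so the induction hypothesis applies verbatim one dimension down. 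Your distance-bookkeeping could be made to work only \emph{after} establishing this fact about $y$, at which point it collapses to the paper's argument.
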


\begin{proof}
The proof goes by induction on the dimension of~$C$ and is obvious in dimension~$1$. Now in higher dimension, we can assume without loss of generality that $x$ and $y$ are the only two anchors in the path, that $i=0$, that $y$ first appears in $F_N$, and that $x\ne z$.
Consider the monotone and conservative path $\Gamma'$ obtained from $\Gamma$ in $\lk_{C}(x)$.
Since $y$ is an anchor of $\Gamma$, it is in the target set of $\Gamma'$ and $\Gamma'$ finishes at a facet containing~$y$.
Since~$\{x,y\},\{x,z\}$ and~$\{y,z\}$ are edges of~$C$, which is flag, the set $\{x,y,z\}$ is a face of $C$ so that~$\{x,z\}$ and~$\{y,z\}$ are in~$\lk_{C}(x)$.
The vertex $z$ is in the first facet of $\Gamma'$ and at distance 1 of the target set. So either it is the anchor of~$\Gamma'$ and we are done, or the first anchor of~$\Gamma'$ then has to be at distance~$1$ of~$y$ to give an admissible order. Call this anchor $x'$.
Then, $\Gamma'$ satisfies all the conditions in the statement, with respect to the vertices $x'$, $y$ and $z$. 
Since flagness is preserved under taking links, we can apply the induction hypothesis in~$\lk_{C}(x)$. Thus, $z$ is in all facets of the path~$\Gamma'$, and so of the path~$\Gamma$.
\end{proof}

\begin{corollary}[{\cite[Section~3]{AdiprasitoBenedetti}, see also~\cite[Corollary~4.20]{Santos-progress}}]
\label{coro:hirsch-for-flag}
Every monotone and conservative path in a flag normal complex is a nonrevisiting path. In particular, its length is bounded by the Hirsch bound.
\end{corollary}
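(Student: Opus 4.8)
The plan is to prove the two assertions in turn; the ``in particular'' is the easy one. In any nonrevisiting dual path $[F_0,\dots,F_N]$ in a pure $(d-1)$-complex on $n$ vertices, the vertex $F_k\ssm F_{k-1}$ that enters at step $k$ cannot have occurred in any earlier facet --- otherwise it would be absent from $F_{k-1}$ yet present both before and after it --- so $F_0$ uses $d$ vertices and each of the $N$ steps contributes at least one new one, giving $N\le n-d$. It therefore remains to show that a monotone conservative path $\Gamma=[F_0,\dots,F_N]$ towards a nonempty target $S$ in a flag normal $(d-1)$-complex $C$ never revisits a vertex, and I would do this by induction on the dimension, Lemma~\ref{lem:containment_z} being the main engine.

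For the induction, the case $d=1$ is immediate (such a path has length at most one). For $d\ge2$, recall the structure of $\Gamma$: its anchors form a vertex-path $x_0,\dots,x_m$ along which $\vdist_C(\,\cdot\,,S)$ strictly decreases, so, consecutive anchors being adjacent, $\vdist_C(x_t,S)=\vdist_C(x_0,S)-t$; the facets of $\Gamma$ with a common anchor $x_t$ form a contiguous subpath $B_t$, and deleting $x_t$ from $B_t$ yields a monotone conservative path in $\lk_C(x_t)$ towards the set $S'$ of Definition~\ref{def:ordered-facet}. As a link of a flag normal complex is again flag and normal, of dimension $d-2$, the induction hypothesis applies in every $\lk_C(x_t)$, and it follows that \emph{within each block $B_t$} the facets containing a fixed vertex form an interval (all of $B_t$ if the vertex is $x_t$, and the nonrevisiting statement in $\lk_C(x_t)$ otherwise). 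Consequently any revisit of a vertex by $\Gamma$ must occur across two distinct blocks.

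Suppose, for contradiction, that $v$ is revisited, and pick a first excursion of $v$: indices $i<j$ with $v\in F_i\cap F_j$ and $v\notin F_k$ for all $i<k<j$ (so in particular $v\notin F_{i+1}$). Since $[F_i,F_{i+1}]$ deletes $v$, conservativeness makes $v$ the last vertex of $F_i$, so $v$ is not the anchor $x_a$ of $F_i$ (here $d\ge2$), yet $v$ and $x_a$ are two distinct vertices of the simplex $F_i$, hence adjacent; thus $\vdist_C(x_a,S)\le\vdist_C(v,S)\le\vdist_C(x_a,S)+1$, the lower bound because the anchor of an admissibly ordered facet realizes its distance to $S$. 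By the previous paragraph $F_i$ and $F_j$ lie in different blocks (else the facets of that block containing $v$ would not form an interval), say $F_i\in B_a$ and $F_j\in B_c$ with $c>a$. Then $\vdist_C(x_c,S)=\vdist_C(x_a,S)-(c-a)<\vdist_C(v,S)$, so $v\ne x_c$, and as $v$ and $x_c$ are distinct vertices of the simplex $F_j$ they are adjacent, giving $\vdist_C(v,S)\le\vdist_C(x_c,S)+1=\vdist_C(x_a,S)-(c-a)+1$; combined with $\vdist_C(v,S)\ge\vdist_C(x_a,S)$ this forces $c=a+1$. Now apply Lemma~\ref{lem:containment_z} to the consecutive anchors $x_a$ (first) and $x_{a+1}$ together with the vertex $v\in F_i$: were $v$ a neighbor of $x_{a+1}$, it would belong to every facet from $F_i$ up to the first facet of $B_{a+1}$, in particular to $F_{i+1}$, contradicting $v\notin F_{i+1}$. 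Hence $v$ is not a neighbor of $x_{a+1}$; but every facet of $B_{a+1}$ is a simplex containing $x_{a+1}$, so none of them can contain $v$ --- contradicting $v\in F_j\in B_{a+1}$. This proves $\Gamma$ nonrevisiting, and with the first paragraph the Hirsch bound follows.

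The only genuinely delicate points are bookkeeping ones. One should check carefully the facts about anchors quoted above --- in particular that $\vdist_C(\,\cdot\,,S)$ \emph{strictly} decreases along the anchor-path, which rests on conservativeness (the anchor of a facet changes only when no admissible reordering keeps the old one, i.e.\ only when the distance to $S$ genuinely drops) --- and the standard fact that a link of a flag normal complex is flag and normal, which is what makes the induction on dimension legitimate and lets Lemma~\ref{lem:containment_z} be invoked inside links. I expect the short distance computation forcing $c=a+1$ to be the step most easily overlooked, since without it one would have to rule out $c\ge a+2$ by a less transparent iteration of Lemma~\ref{lem:containment_z}. Note that flagness is used \emph{only} through Lemma~\ref{lem:containment_z} and through links staying flag; the final implication ``if $v$ is not adjacent to $x_{a+1}$ then $v\notin B_{a+1}$'' is valid in any pure normal complex, consistent with nonrevisiting being false in general.
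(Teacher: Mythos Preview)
Your proof is correct and follows essentially the same route as the paper: split the path by anchors, apply the induction hypothesis inside each link to get nonrevisiting within blocks, use the distance bookkeeping to force a putative revisit into consecutive blocks, and finish with Lemma~\ref{lem:containment_z}. The only cosmetic difference is that the paper organizes this as a double induction on~$d$ and on the length~$N$ (first block versus the rest), whereas you do a single induction on~$d$ and handle all blocks at once via a ``first excursion'' argument; the key lemma and the distance computation forcing $c=a+1$ are identical in both.
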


\begin{proof}
Let $\Gamma=[F_0,\dots,F_N]$ be a monotone conservative path towards a target set~$S$ in a flag normal complex $C$.
The proof is by a double induction on the dimension of $C$ and the length $N$ of the path $\Gamma$.
For dimension $d=1$ or for $N=1$, the result is clear.

Now, assume $d>1$ and $N>1$.
Let $x$ be the first anchor of $\Gamma$ and $y$ be the second anchor, if any. We call~$\Gamma'$ the path~$[F_0\ssm\{x\},\dots,F_k\ssm\{x\}]$ where~$[F_0,\dots,F_{k-1}]$ is the part of~$\Gamma$ anchored at~$x$.
By induction on the dimension, the monotone and conservative path $\Gamma'$ in $\lk_{C}(x)$ is nonrevisiting.
By induction on~$N$, the tail~$\Gamma_2\eqdef[F_k,\dots,F_N]$ of $\Gamma$ once $x$ is not an anchor is nonrevisiting.
So it only remains to show that, if there is a vertex $z$ used in $\Gamma'$ and~$\Gamma_2$, then it has to be contained in the last facet of $\Gamma'$, where a change of anchor occurs.  Since~$z$ is a vertex in~$\lk_C(x)$, it is also a neighbor of~$x$ in~$C$. Moreover~$x$ and~$y$ are consecutive anchors of~$\Gamma$ and so are neighbors in~$C$. Set~$\ell=\vdist_C(y,S)$ and suppose, for the sake of contradiction, that~$z$ is not a neighbor of~$y$. Since~$y$ is the anchor of~$\Gamma$ following~$x$, we have~$\vdist_C(x,S)=\ell+1$, and so~$\vdist_C(z,S)\ge\ell+1$ because of admissibility. Since~$z$ appears in a facet of~$\Gamma_2$, the anchor of this facet has to be at vertex-distance at most~$\ell-1$ from~$S$. Indeed it cannot be~$y$ and the sequence of anchors forms a shortest vertex path to~$S$.
Since~$z$ is a neighbor of this anchor in~$C$, we have~$\vdist_C(z,S)\le\ell$; a contradiction.
Hence~$y$ and~$z$ are neighbors in~$C$; and Lemma~\ref{lem:containment_z} applies to $C$ and $x,y$ and $z$.
Thus~$z$ belongs to the last facet of~$\Gamma'$; which concludes.
\end{proof}

\begin{remark}
Both the monotonicity and conservativeness assumptions are necessary in Corollary~\ref{coro:hirsch-for-flag}. 
On the one hand, consider the nonmonotone but conservative path obtained in Example~\ref{ex:non_ex} in the complex $C=\left\{\{1,2\},\{1,3\},\{1,4\}\right\}$.
The complex $C$ is normal and flag.
Let $F=(1,2)$, $G=(1,3)$, and $S=\{4\}$, the path $[F,G,F]$ is nonmonotone, conservative and revisiting.
On the other hand, consider the complex presented in Figure~\ref{fig:mono_revisiting} and the shown monotone but not conservative path which is revisiting.
Again, the complex is flag and normal.
\end{remark}

\begin{figure}[!hbtp]
 \centerline{\begin{tikzpicture}[vertex/.style={inner sep=1pt,circle,draw=blue!75!black,fill=black!40,thick}]

\def\rad{3}

\node[vertex] (s1) at (0,0) {$s_1$};
\node[vertex] (s2) at (\rad,0) {$s_2$} edge[very thick] (s1);
\node[vertex] (s3) at (\rad/2,-\rad/2) {$s_3$} edge[very thick] (s1) edge[very thick] (s2);

\node[vertex] (f1) at (\rad/2,\rad/2) {$f_1$} edge[very thick] (s1) edge[very thick] (s2);

\node[vertex] (d1) at (\rad,3*\rad/20) {} edge[very thick] (s2) edge[very thick] (f1);
\node[vertex] (d2) at (\rad,6*\rad/20) {} edge[very thick] (d1) edge[very thick] (f1);
\node[vertex] (d3) at (\rad,9*\rad/20) {} edge[very thick] (d2) edge[very thick] (f1);
\node[vertex] (d4) at (\rad,12*\rad/20) {} edge[very thick] (d3) edge[very thick] (f1);
\node[vertex] (d5) at (\rad,15*\rad/20) {} edge[very thick] (d4) edge[very thick] (f1);
\node[vertex] (d6) at (\rad,18*\rad/20) {} edge[very thick] (d5) edge[very thick] (f1);
\node[vertex] (d7) at (\rad,21*\rad/20) {} edge[very thick] (d6) edge[very thick] (f1);
\node[vertex] (d8) at (\rad,24*\rad/20) {} edge[very thick] (d7) edge[very thick] (f1);
\node[vertex] (d9) at (\rad,27*\rad/20) {} edge[very thick] (d8) edge[very thick] (f1);

\node[vertex] (f3) at (\rad,3*\rad/2) {$f_3$} edge[very thick] (d9) edge[very thick] (f1);

\node[vertex] (g1) at (0,25*\rad/40) {$v_3$} edge[very thick] (s1) edge[very thick] (f1);
\node[vertex] (g2) at (0,30*\rad/40) {} edge[very thick] (g1) edge[very thick] (f1);
\node[vertex] (g3) at (0,35*\rad/40) {} edge[very thick] (g2) edge[very thick] (f1);
\node[vertex] (g4) at (0,40*\rad/40) {} edge[very thick] (g3) edge[very thick] (f1);
\node[vertex] (g5) at (0,45*\rad/40) {} edge[very thick] (g4) edge[very thick] (f1);

\node[vertex] (f2) at (0,5*\rad/4) {$f_2$} edge[very thick] (f3) edge[very thick] (f1) edge[very thick] (g5);

\node[vertex] (v2) at (-\rad/4,3*\rad/4) {$v_2$} edge[very thick] (s1) 
                                                edge[very thick] (g1)
					        edge[very thick] (g2)
					        edge[very thick] (g3) 
					        edge[very thick] (g4) 
					        edge[very thick] (g5) 
					        edge[very thick] (f2);

\node[vertex] (v1) at (-\rad,3*\rad/2) {$v_1$} edge[very thick] (f3) edge[very thick] (f2) edge[very thick] (v2) edge[very thick] (s1);

\draw[thick,dashed, rounded corners,->] ($(f3)!0.5!(f2)!0.2!(f1)$) -- ($(f3)!0.5!(f2)!0.33!(v1)$) -- ($(v1)!0.5!(v2)!0.2!(f2)$) -- ($(v1)!0.5!(s1)!0.33!(v2)$) -- ($(s1)!0.5!(g1)$) -- ($(s1)!0.5!(f1)!0.33!(s2)$) -- ($(s1)!0.5!(s2)!0.2!(s3)$);

\node at ($(f1)!0.5!(f2)!0.33!(f3)$) {$F_0$};
\node at ($(s1)!0.5!(s2)!0.33!(s3)$) {$S$};

\node at (2.25*\rad,21*\rad/14) {$F_0=(f_1,f_2,f_3)\quad \Lambda_{F_0}=(1,6,1)$};
\node at (2.25*\rad,19*\rad/14) {$F_1=(v_1,f_2,f_3)\quad \Lambda_{F_1}=(1,2,1)$};
\node at (2.25*\rad,17*\rad/14) {$F_2=(v_1,v_2,f_2)\quad \Lambda_{F_2}=(1,1,1)$};
\node at (2.25*\rad,15*\rad/14) {$F_3=(s_1,v_2,v_1)\quad \Lambda_{F_3}=(0,3,1)$};
\node at (2.25*\rad,13*\rad/14) {$F_4=(s_1,v_3,v_2)\quad \Lambda_{F_4}=(0,2,1)$};
\node at (2.25*\rad,11*\rad/14) {$F_5=(s_1,f_1,v_3)\quad \Lambda_{F_5}=(0,1,1)$};
\node at (2.25*\rad,9*\rad/14) {$F_6=(s_1,s_2,f_1)\quad \Lambda_{F_5}=(0,0,1)$};

\end{tikzpicture}}
 \caption{A flag normal complex with a monotone nonconservative path which is revisiting. The step $[F_0,F_1]$ is not conservative.}
 \label{fig:mono_revisiting}
\end{figure}

Even in flag normal complexes, monotone  conservative paths do not yield, or even approximate, shortest paths:

\begin{lemma}
\label{lemma:dim2examples}
There are flag 2-balls (and flag 3-polytopes) with the following behaviors: 
 \begin{enumerate}[(a)]
  \item the difference in length in monotone conservative paths between two given facets can be arbitrarily large. 
  \item no monotone conservative path is a shortest path between two given facets.
 \end{enumerate}
\end{lemma}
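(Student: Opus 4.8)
The plan is to exhibit explicit flag simplicial $2$-balls realizing each behavior and then to upgrade them to flag $3$-polytopes. Throughout I would use two structural facts established above. First, a monotone conservative path towards a target facet $F_2$ has its sequence of anchors forced to run along a vertex-path on which the distance to $F_2$ strictly decreases (the discussion following Theorem~\ref{thm:step-existence}), hence along a vertex-geodesic, and inside the link of each anchor it is again a monotone conservative path towards the derived target $S'$. Second, when $F_1\cap F_2=\{v\}$ such a path never drops $v$, so its length equals the length of a monotone conservative path in $\lk_C(v)$ between the edges $F_1\ssm v$ and $F_2\ssm v$ (the corollary following Theorem~\ref{thm:equivalenceDefinitions}). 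The common idea behind both constructions is that one can \emph{inflate} the link of a chosen ``bottleneck'' vertex, turning it into a long path or cycle, without shortening any of the vertex-distances that govern the recursion; monotone conservative paths are then pinned to a long route, whereas the freedom in choosing anchors (for~(a)) or the dual graph of the ambient complex (for~(b)) offers a much shorter alternative.

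For~(a), I would take a flag $2$-ball containing a facet $F_1=\{p,q,r\}$ and a target facet $F_2$ such that exactly $p$ and $q$ realize $\vdist(F_1,F_2)$, the link of $p$ being ``thin'' (a path of bounded length) and the link of $q$ being a long cycle in which the edge $F_1\ssm q$ and the derived target sit far apart. Then both $p$ and $q$ can serve as the first vertex of an admissible ordering of $F_1$, so Corollary~\ref{coro:path-existence-2} yields a monotone conservative path from $F_1$ to $F_2$ anchored first at $p$ and one anchored first at $q$. The former has bounded length; the latter must, by the structural facts above, traverse (roughly half of) the inflated link of $q$, hence has length $\Omega(m)$ with $m$ the size of that link. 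Letting $m\to\infty$ gives~(a).

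For~(b), I would build a flag $2$-ball with a bottleneck vertex $v$ of ``inflated'' link, positioned on a vertex-geodesic from $F_1$ to $F_2$, so that every monotone conservative path from $F_1$ to $F_2$ is forced to cross the whole star of $v$ from where $F_1$ sits to where the derived target lies, giving length $\Omega(m)$. Into the same complex I would splice a ``thin corridor'', a triangle strip of bounded length attached along a boundary arc so as to keep the complex a disk, joining $F_1$ to the region of $F_2$ without entering the star of $v$. This corridor is short enough that the dual distance from $F_1$ to $F_2$ is $O(1)$, yet it cannot be used by any monotone conservative path — equivalently, the short dual paths it provides are revisiting, while by Corollary~\ref{coro:hirsch-for-flag} every monotone conservative path in a flag normal complex is nonrevisiting — so no monotone conservative path is a shortest dual path (and the construction can be arranged so that the resulting gap is arbitrarily large as well).

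To pass from $2$-balls to $3$-polytopes, I would complete each flag $2$-ball $B$ to a flag simplicial $2$-sphere $\Sigma$ by attaching further flag material only along boundary edges far from $F_1$, $F_2$ and thin enough to create no new short vertex-path, so that the relevant vertex-distances, the forced anchors, and all monotone conservative paths between $F_1$ and $F_2$ are unchanged, while the dual distance can only decrease. Every simplicial $2$-sphere with at least four vertices is polytopal by Steinitz' theorem, and its realization has $\Sigma$ as boundary complex, hence is flag; this yields the required flag $3$-polytopes. The main obstacle is the construction itself: one must produce bona fide flag simplicial disks in which flagness, the disk topology, normality, and the whole system of vertex-distance constraints (so that the anchors are forced where intended and the inflated links contribute their full length) all hold at once — the verification for~(b), where the thin corridor must shorten the dual distance without disturbing the distances that pin down the bottleneck, is the most delicate point.
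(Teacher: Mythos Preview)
Your plan matches the paper's approach closely: both parts hinge on ``inflating'' the link of a vertex that a monotone conservative path is forced (or allowed) to anchor at, while a short alternative exists elsewhere. The paper's executions are simpler and more concrete than what you sketch. For~(b) the paper does \emph{not} splice in a separate corridor; it just arranges that the unique admissible first anchor (your~$q$, their~$f_2$) has an inflated link, while an ordinary short dual path through the complex already exists --- so you need not worry about a corridor perturbing vertex-distances, and your appeal to Corollary~\ref{coro:hirsch-for-flag} (``the short paths are revisiting, monotone conservative paths are not'') is unnecessary and slightly misaimed: the reason the short route is unavailable is the anchor structure, not revisiting, and in any case showing that \emph{one} short path is revisiting would not rule out other short monotone conservative paths. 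For the $3$-polytope upgrade your description (``attach further flag material along boundary edges far from $F_1,F_2$'') is the right intention but too vague to guarantee flagness and preservation of the relevant vertex-distances simultaneously; the paper's trick is specific: cone a new vertex $v$ over the boundary (this particular boundary creates no empty triangle), then repeatedly edge-subdivide $\{v,f_3\}$, which keeps the sphere flag while pushing $v$ far enough in the $1$-skeleton that no new short vertex-path appears. With these simplifications in place, the verification you flag as ``the most delicate point'' becomes a direct inspection of an explicit picture.
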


\begin{proof}
In Figure~\ref{fig:large_diff}, we see two examples of flag 2-balls with the needed properties.
In the one on the left, the first anchor can be $f_1$ or $f_2$.
If we choose $f_1$, the path to the facet $S$ is going to be short.
Otherwise, choosing $f_2$ may lead to an arbitrary large path depending on the number of vertices involved in the ``comb'' region.
In the complex on the right, the first anchor has to be $f_2$ and the path can be very long for the same reason.
The shortest path from $F$ to $S$ can thus be arbitrarily shorter than any monotone conservative path.

It is not difficult to make these examples as flag 3-polytopes by adding a vertex $v$ joined to the boundary and making edge-subdivisions of the edge $\{v,f_3\}$ sufficiently many times. Adding~$v$ and the edges to the boundary gives a flag 2-sphere since there are no empty triangles. Finally, flagness and polytopality is preserved by doing edge-subdivisions.

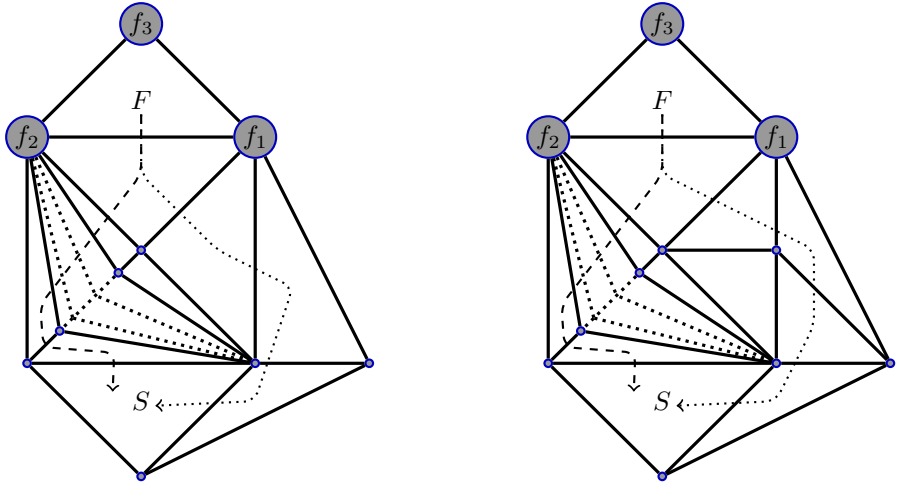
\begin{figure}[!hbtp]
 \centerline{\begin{tabular}{c@{\hspace{2cm}}c}

\begin{tikzpicture}[vertex/.style={inner sep=1pt,circle,draw=blue!75!black,fill=black!40,thick}]

\def\rad{3}

\node[vertex] (s1) at (0,0) {};
\node[vertex] (s2) at (\rad,0) {} edge[very thick] (s1) edge[very thick, dotted] (\rad/5,\rad/5) edge[very thick, dotted] (3*\rad/10,3*\rad/10);
\node[vertex] (s3) at (\rad/2,-\rad/2) {} edge[very thick] (s1) edge[very thick] (s2);

\node[vertex] (d1) at (\rad/7,\rad/7) {} edge[very thick] (s1) edge[very thick] (s2);
\node[vertex] (d2) at (2*\rad/5,2*\rad/5) {} edge[very thick,dotted] (d1) edge[very thick] (s2);

\node[vertex] (c) at (\rad/2,\rad/2) {} edge[very thick] (d2) edge[very thick] (s2);

\node[vertex] (f2) at (0,\rad) {$f_2$} edge[very thick] (s1) edge[very thick] (c) edge[very thick] (d1) edge[very thick] (d2) edge[very thick, dotted] (\rad/5,\rad/5) edge[very thick, dotted] (3*\rad/10,3*\rad/10);

\node[vertex] (f1) at (\rad,\rad) {$f_1$} edge[very thick] (f2) edge[very thick] (c) edge[very thick] (s2);

\node[vertex] (f3) at (\rad/2,3*\rad/2) {$f_3$} edge[very thick] (f1) edge[very thick] (f2);

\node[vertex] (p) at (3*\rad/2,0) {} edge[very thick] (f1) edge[very thick] (s2) edge[very thick] (s3);

\draw[thick,dotted, rounded corners,->] ($(f1)!0.5!(f2)!0.2!(f3)$) -- ($(f1)!0.5!(f2)!0.33!(c)$) -- ($(f1)!0.5!(s2)!0.33!(c)$) -- ($(f1)!0.5!(s2)!0.33!(p)$) -- ($(s3)!0.5!(s2)!0.33!(p)$) -- ($(s3)!0.5!(s2)!0.25!(s1)$);

\draw[thick,dashed, rounded corners,->] ($(f1)!0.5!(f2)!0.2!(f3)$) -- ($(f1)!0.5!(f2)!0.33!(c)$) -- ($(d1)!0.5!(s1)!0.2!(f2)$) -- ($(s1)!0.5!(d1)$) -- ($(s2)!0.5!(s1)!0.33!(d1)$) -- ($(s3)!0.5!(s2)!0.5!(s1)$);

\node at ($(f1)!0.5!(f2)!0.33!(f3)$) {$F$};
\node at ($(s1)!0.5!(s2)!0.33!(s3)$) {$S$};

\end{tikzpicture}

&

\begin{tikzpicture}[vertex/.style={inner sep=1pt,circle,draw=blue!75!black,fill=black!40,thick}]

\def\rad{3}

\node[vertex] (s1) at (0,0) {};
\node[vertex] (s2) at (\rad,0) {} edge[very thick] (s1) edge[very thick, dotted] (\rad/5,\rad/5) edge[very thick, dotted] (3*\rad/10,3*\rad/10);
\node[vertex] (s3) at (\rad/2,-\rad/2) {} edge[very thick] (s1) edge[very thick] (s2);

\node[vertex] (d1) at (\rad/7,\rad/7) {} edge[very thick] (s1) edge[very thick] (s2);
\node[vertex] (d2) at (2*\rad/5,2*\rad/5) {} edge[very thick,dotted] (d1) edge[very thick] (s2);

\node[vertex] (c) at (\rad/2,\rad/2) {} edge[very thick] (d2) edge[very thick] (s2);

\node[vertex] (e) at (\rad,\rad/2) {} edge[very thick] (s2) edge[very thick] (c);

\node[vertex] (f2) at (0,\rad) {$f_2$} edge[very thick] (s1) edge[very thick] (c) edge[very thick] (d1) edge[very thick] (d2) edge[very thick, dotted] (\rad/5,\rad/5) edge[very thick, dotted] (3*\rad/10,3*\rad/10);

\node[vertex] (f1) at (\rad,\rad) {$f_1$} edge[very thick] (f2) edge[very thick] (c) edge[very thick] (e);

\node[vertex] (f3) at (\rad/2,3*\rad/2) {$f_3$} edge[very thick] (f1) edge[very thick] (f2);

\node[vertex] (p) at (3*\rad/2,0) {} edge[very thick] (f1) edge[very thick] (s2) edge[very thick] (s3) edge[very thick] (e);

\draw[thick,dotted, rounded corners,->] ($(f1)!0.5!(f2)!0.2!(f3)$) -- ($(f1)!0.5!(f2)!0.33!(c)$) -- ($(f1)!0.5!(e)!0.33!(c)$) -- ($(f1)!0.5!(e)!0.33!(p)$) -- ($(e)!0.5!(s2)!0.33!(p)$) -- ($(s3)!0.5!(s2)!0.33!(p)$) -- ($(s3)!0.5!(s2)!0.25!(s1)$);

\draw[thick,dashed, rounded corners,->] ($(f1)!0.5!(f2)!0.2!(f3)$) -- ($(f1)!0.5!(f2)!0.33!(c)$) -- ($(d1)!0.5!(s1)!0.2!(f2)$) -- ($(s1)!0.5!(d1)$) -- ($(s2)!0.5!(s1)!0.33!(d1)$) -- ($(s3)!0.5!(s2)!0.5!(s1)$);

\node at ($(f1)!0.5!(f2)!0.33!(f3)$) {$F$};
\node at ($(s1)!0.5!(s2)!0.33!(s3)$) {$S$};

\end{tikzpicture}

\end{tabular}}
 \caption{The flag 2-ball on the left has two monotone conservative paths from $F$ to $S$ whose difference in length is large. The flag 2-ball on the right has a monotone conservative path from $F$ to $S$ much longer than the shortest path.}
 \label{fig:large_diff}
\end{figure}
\end{proof}

\subsection{A linear bound in fixed dimension}
\label{subsec:Larman}

We show here that monotone conservative paths lead to the classical bound of Larman~\cite{larman_paths_1970}, Barnette~\cite{barnette_upperbound_1974}, and Eisenbrand et al.~\cite{eisenbrand_diameter_2010}.

\begin{theorem}[\cite{larman_paths_1970,barnette_upperbound_1974,eisenbrand_diameter_2010}]
\label{thm:BarnetteLarman}
The length of a monotone conservative path in a~$(d-1)$-dimensional pure complex~$C$ on~$n$ vertices ($d\ge2$) is at most~$n2^{d-2}$. In particular if~$C$ is normal, the same inequality holds for its diameter. Moreover, if~$C$ is a pseudomanifold without boundary, the same statement holds with the bound~$n2^{d-3}$.
\end{theorem}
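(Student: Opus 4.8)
The plan is to induct on the dimension, with the decomposition of a monotone conservative path into \emph{anchor blocks} as the engine. So I would fix a monotone conservative path $\Gamma=[F_0,\dots,F_N]$ towards a nonempty target set $S$ in a pure $(d-1)$-complex $C$ on $n$ vertices, and first record the structural facts it rests on. Monotonicity makes the vectors of distances strictly lexicographically decrease, so all facets of $\Gamma$ are distinct; writing $x_1,\dots,x_m$ for the anchors of the facets of $\Gamma$ in order, these are therefore pairwise distinct, the distance to $S$ strictly decreases at each change of anchor, and consecutive anchors are adjacent in the $1$-skeleton (the vertex removed at a conservative step is the last one, so the old anchor survives into the new facet). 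Hence $x_1,\dots,x_m$ is a geodesic of the $1$-skeleton along which the distance to $S$ drops by exactly one, and $m\le n$. The base case is $d=2$: then every $\lk_C(x)$ is a $0$-complex, in which a monotone conservative path towards a nonempty set has length at most $1$, so the bound of the next step gives $\length(\Gamma)\le m\le n=n2^{0}$; if $C$ is moreover a normal $1$-pseudomanifold without boundary it is a single cycle, where each anchor block has length $0$ (a second facet with the same anchor would require a flip inside a $0$-dimensional link that is both monotone and admissible, which is impossible), so $\length(\Gamma)=m-1\le\lfloor n/2\rfloor$.

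For the inductive step ($d\ge3$) I would group the facets of $\Gamma$ by anchor: those with anchor $x_i$ form a subpath $B_i$, and since conservativeness keeps $x_i$ in the facet where the anchor changes, deleting $x_i$ from the facets of $B_i$ and from the first facet of $B_{i+1}$ yields a monotone conservative path in the pure $(d-2)$-complex $\lk_C(x_i)$ towards the derived set $S'$ of Definition~\ref{def:ordered-facet} (this is essentially the equivalence with combinatorial segments, Theorem~\ref{thm:equivalenceDefinitions}). Consequently $\length(B_i)+1\le\maxlength(\lk_C(x_i))$ for $i<m$ and $\length(B_m)\le\maxlength(\lk_C(x_m))$; summing and cancelling the $m-1$ anchor-changing steps gives
\[
\length(\Gamma)\ \le\ \sum_{i=1}^{m}\maxlength(\lk_C(x_i)).
\]
By the inductive hypothesis each term is at most $2^{d-3}$ times $|\operatorname{vertices}(\lk_C(x_i))|$; and when $C$ is a pseudomanifold without boundary so is each $\lk_C(x_i)$, so the sharper exponent $2^{d-4}$ is available there.

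It then remains to bound $\sum_i|\operatorname{vertices}(\lk_C(x_i))|$ linearly in $n$, and this is where I would use that the anchors form a geodesic: a vertex $v$ lies in $\operatorname{vertices}(\lk_C(x_i))$ precisely when $v$ is adjacent to $x_i$, so if $v$ is adjacent to both $x_i$ and $x_j$ then $\vdist_C(x_i,x_j)\le2$, forcing $|i-j|\le2$. Thus each vertex contributes to only a short window of consecutive links, so $\sum_i|\operatorname{vertices}(\lk_C(x_i))|=O(n)$; pushing this count carefully — the vertices on the anchor geodesic are counted at most twice apart from its two ends, and a vertex adjacent to three consecutive anchors is pinned to the intermediate distance level — one obtains the constant $2n$, hence $\length(\Gamma)\le2^{d-2}n$ in general and $\length(\Gamma)\le2^{d-3}n$ for pseudomanifolds without boundary. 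For normal $C$ the diameter satisfies the same bound because, by Corollary~\ref{coro:path-existence-2}, any ordered pair of facets is joined by a monotone conservative path.

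The main obstacle is exactly this last accounting. The structural decomposition and the recursion $\length(\Gamma)\le\sum_i\maxlength(\lk_C(x_i))$ are easy once one has the geodesic property of the anchors, but squeezing the constant down from the naive ``$3n$'' (each vertex in at most three consecutive links) to ``$2n$'' requires exploiting that the link paths do not visit all vertices of the links, or amortising the anchor-changing steps and the two ends of the geodesic against the overcount; this is also precisely the step at which the pseudomanifold hypothesis yields the extra factor $2$, via the fact that vertex links of pseudomanifolds without boundary are again pseudomanifolds without boundary, together with the sharper cycle base case.
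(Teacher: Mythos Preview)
Your overall architecture is exactly the paper's: decompose $\Gamma$ into anchor blocks, pass each block to the link of its anchor, and induct. The difference, and the place where your argument does not yet close, is the counting step you yourself flag.

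You bound the link contribution by $2^{d-3}\cdot|\mathrm{vertices}(\lk_C(x_i))|$ and then try to show $\sum_i|\mathrm{vertices}(\lk_C(x_i))|\le 2n$. That inequality is \emph{false} in general: a vertex adjacent to three consecutive anchors contributes three times, and nothing about $\Gamma$ prevents this (such a vertex need not even appear in $\Gamma$). So the ``push the count carefully'' route you sketch for full links cannot reach $2n$; it really stalls at $3n$. What the paper does---and what your parenthetical ``the link paths do not visit all vertices of the links'' is pointing at---is to replace $|\mathrm{vertices}(\lk_C(x_i))|$ by $n_i$, the number of vertices actually used in the $i$-th block (including the first facet of the next block), and to carry the slightly stronger inductive statement ``the length of any monotone conservative path is at most $2^{d-2}$ times the number of vertices it uses''.

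With that change the $2n$ bound is immediate. The key observation is that in the step where the anchor changes from $x_i$ to $x_{i+1}$, the unique new vertex is $x_{i+1}$ itself (since $x_{i+1}$ cannot lie in a facet anchored at $x_i$, by admissibility). Hence any vertex $v\ne x_{i+1}$ appearing in the last facet of block $i$ already appears in its penultimate facet, which is anchored at $x_i$; so $\vdist(v,S)\in\{\vdist(x_i,S),\vdist(x_i,S)+1\}$, giving at most two admissible values of $i$. An anchor $x_j$ is excluded from $n_j$ and can only be used additionally in $n_{j-1}$ and $n_{j+1}$, again two values. Thus $\sum_i n_i\le 2n$, and the induction goes through with the stated constants in both the general and the pseudomanifold case.
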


The statement is meant with respect to any target set. In particular we never make the target set explicit in the proof for brevity reasons.

As mentioned in the introduction the bound~$n2^{d-2}$ is, modulo a (small) constant, one of the best that are proved for the diameter of normal simplicial complexes, or even for the more restricted case of polytopal simplicial complexes.

\begin{proof}
The proof is by induction on $d$.
In the case $d=1$, either $n=1$, and then $N=0 \le n 2^{d-2} =1/2$, or
$n\ge 2$, and then $N\le 1= 2\cdot\frac{1}{2}\leq n 2^{d-2}$.

For $d\ge 2$ let $x_1,\dots,x_k$ be the sequence of anchors along $\Gamma$. Let $\Gamma_i=[F^i_0,\dots,F^i_{N_i}]$ be the subpath of $\Gamma$ anchored at $x_i$, including the facet at which the change of anchor takes places (that is, $F^{i-1}_{N_{i}}=F^i_0$ is the first facet anchored at $x_i$, obtained in a step that was still anchored at $x_{i-1}$). 

The path $\Gamma_i$ is monotone and conservative in the star of $x_i$. Deleting $x_i$ from it gives a monotone conservative path in $\lk_C(x_i)$ which, by inductive hypothesis, has length at most
\[
n_i 2^{d-3},
\]
where $n_i$ is the number of vertices other than $x_i$ used in~$\Gamma_i$.
Now, since the distances $\vdist(x_i,S)$ are decreasing, no vertex of $C$ can be in more than two links of the form $\lk_{C}(x_i)$. This shows that $\sum_{i=1}^kn_i\leq 2n$. Using this fact and the induction hypothesis, we have
\[
N \quad=\quad \sum_{i=1}^k N_i \quad\le\quad \sum_{i = 1}^k n_i2^{d-3} \quad\leq\quad 2^{d-2}n.
\]

For pseudomanifolds the proof follows the same ideas, but the induction starts with $d=2$. A $1$-dimensional pseudomanifold without boundary is a cycle on~$n$ vertices, and the length of monotone conservative paths in it is indeed bounded by $n/2$.

Now the normality hypothesis ensures the existence of monotone conservative paths between any two facets, which concludes for the bound on the diameter in this case.
\end{proof}

\subsection{Refined bound for banner complexes}
\label{subsec:BannerComplexes}

We now look at monotone conservative paths in banner complexes.
We begin by recalling the definition by Klee and Novik~\cite{KleeNovik}.
Then, we prove upper bounds for the length of monotone conservative paths for these complexes, and therefore on their diameter.

\begin{definition}
Let~$C$ be a pure simplicial complex of dimension~$(d-1)$. A~\defn{critical clique} is a set~$T$ of vertices of~$C$ forming a clique in the~$1$-skeleton of~$C$ and such that there exists a vertex~$v\in T$ such that~$T\ssm \{v\}$ is a face of~$C$.
For~$k\in\{2, \dots,d-1\}$, the complex~$C$ is said to be~\defn{$k$-banner} if every critical clique of size at least~$(k+1)$ is a face of~$C$.
\end{definition}

Notice that any~$(d-1)$-dimensional complex has no face with~$(d+1)$ vertices, and thus cannot contain a critical clique of size~$(d+2)$. So any~$(d-1)$-dimensional complex is~$(d+1)$-banner.
Other properties on banner complexes are to be found in the article~\cite{KleeNovik}.
Among them is the fact that any~$k$-banner complex is also~$(k+1)$-banner, and that flag~and~$2$-banner complexes coincide.
Thus, banner complexes form a nested sequence of families of complexes starting with flag complexes for $k=2$ to finish with all complexes for $k=d+1$.
Let us also mention that the original definition allows for $k=1$, but since $1$-banner and $2$-banner complexes are exactly the same, we prefer not to consider $1$-banner complexes.

\begin{remark}
Every minimal nonface of size at least~$3$ in a complex is also a critical clique. In particular, if~$C$ is~$k$-banner then every minimal nonface of it has size at most~$k$. In other words, the Stanley--Reisner ring of a $k$-banner complex is generated in degree at most~$k$.  The converse may not hold.
\end{remark}

We also recall the following result on links in banner complexes.

\begin{lemma}[{\cite[Lemma~3.4]{KleeNovik}}]
\label{lem:linkBanner}
Let $k\ge2$, $C$ be a simplicial complex and $x$ be a vertex of $C$. If~$C$ is a~$k$-banner, then the link~$\lk_C(x)$ of~$x$ in~$C$ is~$(k-1)$-banner.
\end{lemma}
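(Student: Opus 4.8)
The plan is to verify the $(k-1)$-banner property of $\lk_C(x)$ straight from the definition: take an arbitrary critical clique $T$ of $\lk_C(x)$ of size at least $k=(k-1)+1$ and show it is a face of $\lk_C(x)$. The mechanism is to \emph{lift} $T$ to the set $T\sqcup\{x\}$, prove that this lifted set is a critical clique of $C$ of size at least $k+1$, and then invoke the $k$-banner hypothesis on $C$.

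First I would record the handful of elementary link facts that are needed. Since $\lk_C(x)=\{f\in C:f\sqcup\{x\}\in C\}$, no face of $\lk_C(x)$ contains $x$; in particular $x\notin T$, so the union $T\sqcup\{x\}$ is genuinely disjoint. Every vertex $u$ of $\lk_C(x)$ satisfies $\{u,x\}\in C$, and every edge $\{u,w\}$ of $\lk_C(x)$ satisfies $\{u,w,x\}\in C$, hence also $\{u,w\}\in C$ by closure under subsets. Next, unwinding the definition of critical clique for $T$ in $\lk_C(x)$: $T$ is a clique in the $1$-skeleton of $\lk_C(x)$, and there is a vertex $v\in T$ with $T\ssm\{v\}\in\lk_C(x)$. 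Combining these observations, all pairs inside $T$ and all pairs $\{u,x\}$ with $u\in T$ are edges of $C$, so $T\sqcup\{x\}$ is a clique in the $1$-skeleton of $C$; moreover $(T\ssm\{v\})\sqcup\{x\}=(T\sqcup\{x\})\ssm\{v\}\in C$. Thus $T\sqcup\{x\}$ is a critical clique of $C$ with witness vertex $v$, and its size is $|T|+1\ge k+1$. Since $C$ is $k$-banner, $T\sqcup\{x\}$ is a face of $C$, which is precisely the statement $T\in\lk_C(x)$, as required.

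I do not anticipate a genuine obstacle: the argument is entirely a bookkeeping exercise with the two definitions. The single point that deserves care is that the vertex $v$ witnessing that $T$ is a critical clique of $\lk_C(x)$ is the \emph{same} vertex witnessing that $T\sqcup\{x\}$ is a critical clique of $C$ — one must resist removing $x$ instead. A secondary formality is the boundary value $k=2$, where ``$(k-1)$-banner'' reads as ``$1$-banner''; this case is covered because $1$-banner and $2$-banner complexes coincide (as noted in the text, size-two critical cliques are edges, hence automatically faces) and the argument above nowhere uses $k\ge3$. Finally, if one wishes the lemma for a not-necessarily-pure $C$, the same proof applies verbatim once one recalls that links of pure complexes are pure, so that the critical-clique notion is available at each stage.
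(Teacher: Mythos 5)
Your proof is correct and follows exactly the paper's argument: the paper's entire proof is the one-line observation that adding $x$ to a critical clique of $\lk_C(x)$ yields a critical clique of $C$ of size one more, which is precisely the lifting you carry out (with the same witness vertex $v$ retained). Your write-up just makes the bookkeeping explicit.
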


\begin{proof}
Any critical clique in~$\lk_C(x)$ is, after adding~$x$, a critical clique of~$C$ of size one more.
\end{proof}

This lemma allows us to provide an upper bound for the length of monotone conservative paths in banner complexes, combining the proofs of Theorem~\ref{thm:BarnetteLarman} and Corollary~\ref{coro:hirsch-for-flag}, which are special cases of the following:

\begin{theorem}
\label{thm:upperBound}
Let $d\ge2$, $k\ge2$, and $C$ be a normal $k$-banner $(d-1)$-complex on~$n$ vertices.
If $\Gamma=[F_0,\dots,F_{\ell}]$ is a monotone conservative path in $C$ towards a  set~$S$, 
then the length of~$\Gamma$ satisfies~$\ell\le n2^{k-2}$. 
\end{theorem}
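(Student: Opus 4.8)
The plan is to mimic the induction used in Theorem~\ref{thm:BarnetteLarman}, but to split the argument depending on whether $k \le d$ or not, so that the $k$-banner hypothesis can be leveraged exactly when it is informative. When $k \ge d+1$ the statement is weaker than (or equal to) Theorem~\ref{thm:BarnetteLarman}, since every $(d-1)$-complex is $(d+1)$-banner and $n2^{k-2} \ge n2^{d-1} \ge n2^{d-2}$; so we may assume $2 \le k \le d$. We induct on $k$ (equivalently, on the pair $(k,d)$ ordered so that taking links decreases both). The base case $k=2$ is precisely Corollary~\ref{coro:hirsch-for-flag}: a flag normal complex has all its monotone conservative paths nonrevisiting, hence of length at most $n-d \le n = n2^{k-2}$. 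This is where flagness must really be used, and it is already done.

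For the inductive step with $3 \le k \le d$, I would run the anchor decomposition exactly as in Theorem~\ref{thm:BarnetteLarman}. Write $x_1,\dots,x_m$ for the successive anchors along $\Gamma$, and let $\Gamma_i$ be the maximal subpath anchored at $x_i$ (including the facet where the anchor changes to $x_{i+1}$). Deleting $x_i$ turns $\Gamma_i$ into a monotone conservative path in $\lk_C(x_i)$. By Lemma~\ref{lem:linkBanner}, $\lk_C(x_i)$ is $(k-1)$-banner; it is also pure, normal, of dimension $d-2$, and has some number $n_i$ of vertices distinct from $x_i$. The induction hypothesis (applied with $k-1$ in place of $k$) gives $\length(\Gamma_i) \le n_i 2^{k-3}$. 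As in the Larman proof, the key combinatorial observation is that the anchors lie on a shortest vertex-path to $S$, so their distances to $S$ strictly decrease; hence no vertex of $C$ lies in more than two of the links $\lk_C(x_i)$, giving $\sum_i n_i \le 2n$. Summing, $\length(\Gamma) = \sum_i \length(\Gamma_i) \le 2^{k-3} \sum_i n_i \le 2^{k-3} \cdot 2n = n2^{k-2}$, as desired. One should double-check the edge case $\dim C = 0$ or a path of length at most $1$, where the bound is immediate.

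The main obstacle — and really the only subtle point — is making the induction well-founded and the base case airtight. In the recursion the dimension drops by one each time we pass to a link, while the banner parameter also drops by one; but $n2^{k-2}$ does not obviously decrease, so one must be careful that the induction is genuinely on $k$ (or on $k+d$) and that, when $k$ reaches $2$ before $d$ does, we invoke Corollary~\ref{coro:hirsch-for-flag} rather than continuing to unwind. A minor related point: if at some stage $k-1 \ge \dim(\lk_C(x_i))+2$, i.e. the link is already "$(\dim+1)$-banner for free," we should instead feed it to Theorem~\ref{thm:BarnetteLarman}; since $n_i 2^{(\dim \lk)-2} \le n_i 2^{k-3}$ in that regime, the same arithmetic goes through. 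Everything else is the bookkeeping of the Larman argument, which the paper has already carried out in detail.
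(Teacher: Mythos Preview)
Your proposal is correct and follows essentially the same approach as the paper: anchor decomposition, descent to the link via Lemma~\ref{lem:linkBanner}, and the ``each vertex lies in at most two links'' counting to get $\sum n_i \le 2n$. The only organizational difference is that the paper phrases the induction on the dimension $d$ (with base case $d=2$, where $\ell\le n$ directly) rather than on $k$, but since both parameters drop by one in the link this is cosmetic; your extra case-splitting on $k\ge d+1$ and the remark about feeding trivially-banner links to Theorem~\ref{thm:BarnetteLarman} are unnecessary but harmless.
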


\begin{proof}
We show the result by induction on the dimension~$d-1$. For~$d=2$, monotone conservative paths are the shortest paths in~$C$ so that~$\ell\le n$. The inequality thus holds in this case.
Suppose now that~$d>2$. If~$k=2$, then we know that~$C$ is flag, implying the Hirsch bound~\cite{AdiprasitoBenedetti}, and the inequality holds.

 So suppose now that~$k\ge3$ and consider the dual paths~$\Gamma_1,\dots,\Gamma_r$ obtained from splitting~$\Gamma$ according to its sequence of anchors, that is~$\Gamma_i$ is the greatest subpath of~$\Gamma$ in which the anchor of any facet is the~$i^{\mbox{\tiny th}}$ anchor~$x_i$ of the sequence of anchors of~$\Gamma$, for~$i\in[r]$.
 In particular all~$\Gamma_i$'s are monotone conservative paths from their starting facet towards the set~$S$.
We fix~$i\in[r]$ and consider the link~$\Gamma'_i$ of~$x_i$ in~$\Gamma_i$. It is a monotone conservative path towards the set~$S'$ of Definition~\ref{defi:vector-of-distances}. Moreover it is a monotone conservative path in the link~$\lk_C(x_i)$ of~$x_i$ in~$C$.

Since~$C$ is~$k$-banner, Lemma~\ref{lem:linkBanner} ensures that this link is~$(k-1)$-banner. Since~$k\ge3$, we can thus apply the induction hypothesis to~$\Gamma'_i$.
Let~$n_i$ be the number of vertices of~$C$ that appear in at least one facet of~$\Gamma'_i$, the induction hypothesis says that~$\Gamma'_i$, and thus~$\Gamma_i$, has length at most~$n_i2^{k-3}$.
So we have
\[
\ell \le \sum_{i=1}^{r} n_i2^{k-3} = 2^{k-3}\sum_{i=1}^{r} n_i.
\]
Now recall that each facet of the path~$\Gamma_i$ contains~$x_i$ and vertices that are either at distance $\vdist(x_i,S)$ or $\vdist(x_i,S)~+~1$ of the set~$S$.
Since~$\vdist(x_i,S) = \vdist(x_{i + 1}, S) + 1$, a vertex of~$C$ cannot appear in more than two consecutive~$\Gamma_i$'s.
So the sum of the~$n_i$'s is smaller than~$2n$, which concludes.
\end{proof}

Observe that Theorem~\ref{thm:upperBound} somehow interpolates between Theorem~\ref{thm:BarnetteLarman} and Corollary~\ref{coro:hirsch-for-flag}:
\begin{itemize}
\item Since every $(d-1)$-complex is $(d+1)$-banner, substituting $k$ by $d+1$, we recover (except for a factor of two) the bounds in Theorem~\ref{thm:BarnetteLarman}.
\item Since flag complexes are (the same as) $2$-banner complexes, substituting $k$ by $2$, we recover (almost) the Hirsch bound of Corollary~\ref{coro:hirsch-for-flag}.
\end{itemize}


\section{Monotone conservative paths can be exponentially long}
\label{sec:lowerbound}

In this final section, we present two constructions of normal complexes that contain exponentially long monotone conservative paths.
The first one is a vertex-decomposable ball and the second, a simplicial polytope whose boundary complex is vertex-decomposable.
Both construction rely on lemmas (Lemma~\ref{lemma:exampleBall} and~\ref{lemma:exampleSphere}) that provide the inductive procedure of the construction.

\subsection{A simplicial ball with exponential monotone conservative paths}

\begin{lemma}
\label{lemma:exampleBall}
Let $\ball$ be a $(d-1)$-dimensional simplicial ball with $n$ vertices. Suppose there are facets $F_1$ and $F_2$ and vertices $x_1$ and $x_2$ satisfying the following properties:
\begin{enumerate}
\item $F_i$ is the only facet containing $x_i$, for $i=1,2$.
\item Every monotone conservative path from $F_1$ to $F_2$ or from $F_2$ to $F_1$ has length at least $L$.
\end{enumerate}
Then, for every $k\ge 2d$ there is a $d$-dimensional simplicial ball $\ball'$ with $n+k+1$ vertices having facets $F'_1$ and $F'_2$ and vertices $x'_1$ and $x'_2$ satisfying:
\begin{enumerate}
\item $F'_i$ is the only facet containing $x'_i$, for $i=1,2$.
\item Every monotone conservative path from $F'_1$ to $F'_2$ or from $F'_2$ to $F'_1$ has length at least $2L+k$.
\end{enumerate}
\end{lemma}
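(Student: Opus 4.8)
The plan is to obtain $\ball'$ from $\ball$ in two stages: a one-point suspension to gain a dimension and create two copies of $\ball$, followed by a ``stretching'' that spends the $k$ new vertices on two long arms and on some stellar subdivisions that prevent shortcuts between the two copies. The lower bound $2L+k$ is then extracted from the recursive description of monotone conservative paths (Theorem~\ref{thm:step-existence} and the discussion of anchors after it) together with the analysis of one-point suspensions in Section~\ref{sec:combinatorialsegments}.

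Concretely, set $\widehat\ball:=\ops_\ball(x_2)$; since $F_2$ is the only facet of $\ball$ containing $x_2$, the link $\lk_\ball(x_2)$ is a single simplex, and $\widehat\ball$ is a $d$-ball on $n+1$ vertices in which $\lk_{\widehat\ball}(v_1)\cong\lk_{\widehat\ball}(v_2)\cong\ball$, the facet $\widehat F_2:=(F_2\ssm x_2)\join\{v_1,v_2\}$ is the unique one containing both suspension vertices, and in the copy $\lk_{\widehat\ball}(v_i)$ the facets $F_1\join\{v_i\}$ and $\widehat F_2$ play the roles of $F_1$ and of $F_2$. Now attach a ``simplicial arm'' $A_1$ (a stacked path of $d$-simplices, consecutive ones sharing a ridge) to a free ridge of $F_1\join\{v_1\}$ and an arm $A_2$ to a free ridge of $F_1\join\{v_2\}$, with tip simplices $F'_1, F'_2$ and tip apexes $x'_1, x'_2$; and perform a bounded number of stellar subdivisions in the ``equatorial'' region of $\widehat\ball$ arranged so that no facet $H\join\{v_1\}$ stays dual-adjacent to its twin $H\join\{v_2\}$ (for $H\ne F_2$ a facet of $\ball$). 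Choosing $\length(A_1)+\length(A_2)$ and the number of subdivisions to total $k$ uses exactly $k$ new vertices; the hypothesis $k\ge2d$ is what makes this possible. Since one-point suspensions, stellar subdivisions, and attachments of simplices along boundary ridges all produce balls (and preserve vertex-decomposability by Lemma~\ref{lem:ops-vertex-decomposable}), $\ball'$ is a $d$-ball on $n+k+1$ vertices, and property~(1) is immediate since $x'_i$ lies only in the tip facet $F'_i$.

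For property~(2), take a monotone conservative path $\Gamma$ from $F'_1$ to $F'_2$ towards $S:=F'_2$ (the direction from $F'_2$ to $F'_1$ is symmetric). As $S$ lies in the body, the vertex-distances to $S$ strictly decrease along $A_1$ from its tip to its foot, so $\Gamma$ must spend its first $\length(A_1)$ steps traversing $A_1$, and symmetrically its last $\length(A_2)$ steps traversing $A_2$; counting the steps needed to cross the subdivided equatorial region as well, $\Gamma$ uses at least $k$ steps outside the two copies of $\ball$. The remaining part of $\Gamma$ is a monotone conservative path in $\widehat\ball=\ops_\ball(x_2)$ from $F_1\join\{v_1\}$ to $F_1\join\{v_2\}$; by the analysis of one-point suspensions it passes from the star of $v_1$ to the star of $v_2$, and after deleting $v_1$, resp.\ $v_2$, its two halves become monotone conservative paths in $\lk_{\widehat\ball}(v_1)\cong\ball$ and in $\lk_{\widehat\ball}(v_2)\cong\ball$; by construction the first runs between the copies of $F_1$ and $F_2$ in the $v_1$-copy and the second between the copies of $F_2$ and $F_1$ in the $v_2$-copy, so each has length at least $L$ by the hypothesis on $\ball$ (applied in both directions). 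Adding up, $\length(\Gamma)\ge k+L+L=2L+k$.

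The main obstacle is exactly the claim that $\Gamma$ cannot shortcut between the two copies of $\ball$: a priori the twin facets $H\join\{v_1\}$ and $H\join\{v_2\}$ are dual-adjacent for every facet $H$ of $\ball$ avoiding $x_2$, so without the equatorial subdivisions the single step $F_1\join\{v_1\}\to F_1\join\{v_2\}$ would already be a legal monotone conservative path of length one. The work is in designing the subdivisions (and choosing where the arms are attached) so that every admissible conservative crossing between the suspension vertices is forced to occur only at the subdivided neighbourhood of $\widehat F_2$, hence only after a full monotone conservative traversal of a copy of $\ball$ from $F_1$ to $F_2$, while checking that $\ball'$ remains a normal ball and that no shorter path is inadvertently introduced; the inequality $k\ge2d$ provides the room these subdivisions require.
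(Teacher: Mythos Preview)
Your overall scheme---one-point suspension plus two arms---is exactly the paper's construction (there the suspension is taken at $x_1$ and the cones $C_{k_1},C_{k_2}$ are glued on the $F_2$ side, which is symmetric to your choice). The gap is precisely at the step you yourself flag as the obstacle, and your proposed fix does not work.

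Breaking every dual adjacency $H\join v_1\sim H\join v_2$ by equatorial stellar subdivisions is infeasible with only $k$ new vertices: there is one such adjacency for each facet $H\ne F_2$ of $\ball$, and their number is not bounded in terms of $k$ or $d$ (already for the $1$-dimensional base $\ball_2$ on $N{+}4$ vertices there are $N{+}2$ of them, while $k=4$). Worse, subdividing the ridge $H$ does not destroy the twin adjacency---it merely replaces it by $d$ new twin adjacencies through the inserted vertex---and any subdivision that genuinely separates the two sides alters $\lk_{\widehat\ball}(v_i)$, wrecking the isomorphism with $\ball$ on which your length-$L$ estimate relies.

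The point you are missing is that no subdivision is needed at all. The feared step $F_1\join v_1\to F_1\join v_2$ is never conservative: when the path reaches $F_1\join v_1$, the vertex $v_1$ realises the minimum vertex-distance $1$ to the target $F'_2$, so $v_1$ is the anchor and hence the \emph{first} vertex of any admissible order, whereas a conservative step must drop the \emph{last} vertex. Consequently the path stays in $\st(v_1)$ until it reaches a facet containing a vertex of $F'_2$; the only such vertex lying in $\st(v_1)$ is $v_2$, and the unique facet of $\ops_\ball(x_2)$ containing both $v_1$ and $v_2$ is $\widehat F_2$. Thus the link at $v_1$ of this first subpath is a monotone conservative path in $\lk(v_1)\cong\ball$ from $F_1$ towards $\{x_2\}$, necessarily ending at $F_2$, of length $\ge L$ by hypothesis; the subpath anchored at $v_2$ is symmetric. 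For this anchor argument each arm $A_i$ must be a \emph{cone over $v_i$} (as the paper's $C_{k_i}$ is a cone over $u$), so that $v_i\in F'_i$ is the unique vertex of $F'_i$ at distance $1$ from $F'_j$; a generic stacked path, as you describe it, does not guarantee this. The hypothesis $k\ge 2d$ is used only to make each arm long enough ($k_i\ge d$) that $F'_i$ meets the suspension in $v_i$ alone.
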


Observe that the statement does not specify the target sets for the monotone conservative paths. The expression ``for every monotone conservative path'' is then meant whatever the target set $S$ may be; as long as that target set $S$ allows for the existence of the claimed monotone conservative paths; in the case of the lemma this is equivalent to $x_2\subset S \subset F_2$. 

\begin{proof}
The condition on the vertex $x_i$ implies that the facet $F_i$ has a codimension-one face $G_i$ contained in the boundary of $\ball$.
Consider the one-point-suspension $\ops_\ball(x_1)$ of $\ball$ on the vertex~$x_1$, and call $u_1$ and $u_2$ the suspension vertices.
Observe that $F_2 \cup u_i$ is a facet containing the codimension-one face $G_2\cup u_i$, for $i=1,2$. 

Let $C_\ell$ be the following $d$-complex on $\ell+d$ vertices $\{u,v_1,v_2,\dots, v_{\ell+d-1}\}$ and $\ell$ facets:
\[
C_\ell:= \{ \{u, v_i,\dots, v_{i+d-1}\} : i=1,\dots \ell\}.
\]
Observe that for $\ell\ge d$ the codimension-one boundary face $G:=\{u,v_1,\dots, v_{d-1}\}$ and the facet $F:=\{u,v_\ell,\dots, v_{\ell+d-1}\}$ only have $u$ in common.

Let now $k_1,k_2\in \N$ be such that $k_1+k_2=k$ and $k_i\ge d$, $i=1,2$.
Our complex $\ball'$ consists of the one-point-suspension $\ops_\ball(x_1)$ of $\ball$ with a copy of $C_{k_1}$ glued by identifying $G_2\cup u_1$ with $G\cup u$
and  a copy of $C_{k_2}$ glued by identifying $G_2\cup u_2$ with $G\cup u$. The facets $F'_1$ and $F'_2$ are the $F$'s in $C_{k_1}$ and $C_{k_2}$, and the vertices $x'_i$ are the last vertices $v_{k_1+d-1}$ and $v_{k_2+d-1}$ in them. Let us see that the stated properties are satisfied. We concentrate on paths from $F'_1$ to $F'_2$ but all the assertions are valid for the reverse paths, by symmetry.

\begin{itemize}
\item The number of vertices in $\ball'$ equals the $n+1$ in $\ops_\ball(x_1)$ plus the $k+2d$ vertices in $C_{k_1}$ plus $C_{k_2}$, minus the $2d$ vertices along which we glue.

\item The facets $F'_i$ and vertices $x'_i$ clearly satisfy property 1 in the statement.

\item Since $\ball'$ is a ball, it is normal and, by Lemma~\ref{coro:path-existence-2}, there is at least one monotone conservative path from $F'_1$ to $F'_2$.

\item Every facet path (monotone and conservative or not)  from $F'_1$ to $F'_2$ starts by going through the $k_1$ facets of $C_{k_1}$ and finishes by going through the $k_2$ facets $C_{k_2}$. Apart of these $k$ steps, the path consists of a path between $F_2 \cup u_1$ and $F_2 \cup u_2$ inside  $\ops_\ball(x_1)$.

\item The unique closest vertices between $F'_1$ and $F'_2$ are $u_1$ and $u_2$, which form an edge. In particular, every monotone conservative path from $F'_1$ to $F'_2$ (or vice versa) has two anchors, $u_1$ and $u_2$.

\item The part of the path anchored at $u_1$ goes from $F_2 \cup u_1$ to $\ops_{F_1}(x_1)=F_1\ssm\{x_1\}\cup \{u_1,u_2\}$. Its link at $u_1$ is hence a monotone conservative path from $F_2$ to $F_1$ in $\ball$, so it has length at least $L$.

\item The part of the path anchored at $u_2$ goes from $\ops_{F_1}(x_1)=F_1\ssm\{x_1\}\cup \{u_1,u_2\}$ to $F_2 \cup u_2$. Its link at $u_2$ is hence a monotone conservative path from $F_1$ to $F_2$ in $\ball$, so it has length at least $L$.\qedhere
\end{itemize}
\end{proof}

\begin{theorem}
\label{thm:exampleBall}
For every $d\ge 2$ and every $N\ge 4$ there is a $(d-1)$-ball $\ball_d$ with $N+d^2$ vertices and two facets $F_1$ and $F_2$ in it such that every  monotone conservative path between them has length at least $2^{d-2} (N+3)$.
\end{theorem}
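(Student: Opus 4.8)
The plan is to prove the theorem by induction on $d$, using Lemma~\ref{lemma:exampleBall} as the engine of the inductive step. All the genuine content is already contained in that lemma, so what is left is to pick a convenient base case and to keep track of the two numerical recursions it generates: one for the number of vertices, one for the length lower bound.

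For the base case $d=2$ I would take $\ball_2$ to be a path on $N+4$ vertices --- the only kind of $1$-dimensional simplicial ball --- with $F_1,F_2$ its two extreme edges and $x_1,x_2$ its two leaves. Then $x_i$ lies only in the facet $F_i$, the complex is normal (being connected of dimension~$1$), and since a $1$-dimensional complex coincides with its own $1$-skeleton, every monotone conservative path between two facets is a dual geodesic. Hence the unique dual path between $F_1$ and $F_2$ (in either direction) has length equal to the dual diameter of the path, namely its number of vertices minus~$2$, i.e.\ $N+2$. This supplies, at dimension~$1$, the hypotheses of Lemma~\ref{lemma:exampleBall} with $n=N+4$ and $L=N+2$.

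For the inductive step, assume a $(d-2)$-ball $\ball_{d-1}$ with $n_{d-1}$ vertices and distinguished facets and private vertices realizing a length lower bound $L_{d-1}$. Applying Lemma~\ref{lemma:exampleBall} with the smallest admissible value $k=2(d-1)$ of its parameter (so $k_1=k_2=d-1$, each equal to the output dimension) produces a $(d-1)$-ball $\ball_d$ with $n_d=n_{d-1}+k+1=n_{d-1}+2d-1$ vertices, in which every monotone conservative path between the two new distinguished facets has length at least $L_d=2L_{d-1}+k=2L_{d-1}+2d-2$; moreover $\ball_d$ again carries two distinguished facets each with a private vertex, so the hypotheses propagate. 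Telescoping from the base case and using $\sum_{j=1}^{d}(2j-1)=d^2$ gives
\[
n_d \;=\; n_2+\sum_{j=3}^{d}(2j-1)\;=\;(N+4)+(d^2-4)\;=\;N+d^2,
\]
while the length recursion $L_j=2L_{j-1}+2j-2$ with $L_2=N+2$ solves to $L_d=2^{d-2}(L_2+6)-2d-2=2^{d-2}(N+8)-2d-2$, which is at least $2^{d-2}(N+3)$ since $5\cdot 2^{d-2}\ge 2d+2$ for $d\ge 3$ (for $d=2$ the base path itself is the example, the harmless discrepancy between $N+2$ and $N+3$ being absorbed by letting $\ball_2$ carry one spare vertex there).

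I expect no deep obstacle: granting Lemma~\ref{lemma:exampleBall}, the theorem is pure bookkeeping. The only points needing attention are (i) choosing $k$ minimal at every step so that the telescoped vertex count lands exactly on $N+d^2$ --- a larger $k$ still doubles the length but inflates the quadratic term --- and (ii) checking that the base case is fat enough that the linear tail $-2d-2$ never overtakes the exponential main term $2^{d-2}N$. The conceptual heavy lifting, namely that a one-point suspension followed by two ``tube'' gluings roughly doubles the length of every monotone conservative path while adding only $O(d)$ vertices, has already been carried out in the lemma.
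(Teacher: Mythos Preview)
Your proposal is correct and follows exactly the paper's approach: the paper's entire proof is the single sentence ``Let $\ball_2$ be a path with $N+4$ vertices and iterate the process of Lemma~\ref{lemma:exampleBall} with $k=2d$,'' which is precisely your base case and your choice of minimal $k$ at each step. Your explicit solution of the two recursions goes beyond what the paper writes out; the off-by-one you flag at $d=2$ (the base path only gives length $N+2$, not $N+3$) is genuinely present in the stated constant, though your proposed fix of a spare vertex would spoil the exact count $N+d^2$ and is best left as a remark on the constant rather than a patch.
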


\begin{proof}
Let $\ball_2$ be a path with $N+4$ vertices and iterate the process of Lemma~\ref{lemma:exampleBall} with~$k=2d$.
\end{proof}

\begin{remark}
It is easy to check that the construction of Lemma~\ref{lemma:exampleBall} preserves shellability and vertex-decomposability. Moreover, if every monotone conservative path in $\ball$ is a Hamiltonian path in the dual graph, then the same happens in $\ball'$. In particular, in the statement of Theorem~\ref{thm:exampleBall} we can add that $\ball_d$ is vertex-decomposable and that every monotone conservative path from $F_1$ to $F_2$ is Hamiltonian.
\end{remark}

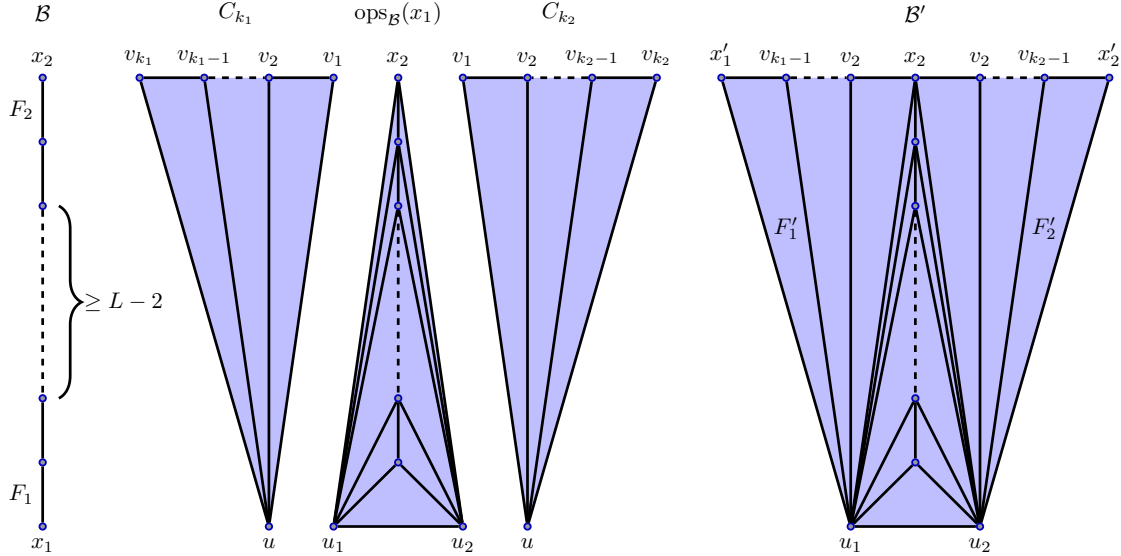
\begin{figure}[!htbp]
\begin{center}
 \resizebox{\hsize}{!}{\begin{tikzpicture}[vertex/.style={inner sep=1pt,circle,draw=blue!75!black,fill=black!40,thick,rotate=90}]

\def\stepone{5.5}
\def\steptwo{0}
\def\stepthree{-8}

\begin{scope}[rotate=90]
\node at (+8,\stepone) {$\mathcal{B}$};

\node[vertex,label=left:{$x_1$}] (x1_1) at (0,\stepone) {};
\node[vertex] (a1_1) at (1,\stepone) {} edge[very thick] node[left, midway] {$F_1$} (x1_1);
\node[vertex] (a2_1) at (2,\stepone) {} edge[very thick] (a1_1);
\node[vertex] (al2_1) at (5,\stepone) {} edge[very thick,dashed] (a2_1);
\node[vertex] (al1_1) at (6,\stepone) {} edge[very thick] (al2_1);
\node[vertex,label=right:{$x_2$}] (x2_1) at (7,\stepone) {} edge[very thick] node[left, midway] {$F_2$} (al1_1);

\draw[very thick,decorate,decoration={brace,amplitude=10pt},xshift=0cm,yshift=-0.25cm] (5,\stepone) -- (2,\stepone) node [black,midway,xshift=1cm] {$\geq L-2$};

\node at (+8,\steptwo) {$\ops_{\mathcal{B}}(x_1)$};
\node at (+8,\steptwo+2.5) {$C_{k_{1}}$};
\node at (+8,\steptwo-2.5) {$C_{k_{2}}$};

\coordinate (cu1_2) at (0,\steptwo+1);
\coordinate (cu2_2) at (0,\steptwo-1);
\coordinate (ca1_2) at (1,\steptwo);
\coordinate (ca2_2) at (2,\steptwo);
\coordinate (cal2_2) at (5,\steptwo);
\coordinate (cal1_2) at (6,\steptwo);
\coordinate (cx2_2) at (7,\steptwo);
\coordinate (cv1_2) at (7,\steptwo+1);
\coordinate (cv2_2) at (7,\steptwo+2);
\coordinate (cvk11_2) at (7,\steptwo+3);
\coordinate (cvk1_2) at (7,\steptwo+4);
\coordinate (cvv1_2) at (7,\steptwo-1);
\coordinate (cvv2_2) at (7,\steptwo-2);
\coordinate (cvvk11_2) at (7,\steptwo-3);
\coordinate (cvvk1_2) at (7,\steptwo-4);

\coordinate (cgu_2) at (0,\steptwo+2);
\coordinate (chu_2) at (0,\steptwo-2);

\fill[blue!25!white] (cu1_2) -- (cu2_2) -- (cx2_2) -- cycle;
\fill[blue!25!white] (cv1_2) -- (cvk1_2) -- (cgu_2) -- cycle;
\fill[blue!25!white] (cvv1_2) -- (cvvk1_2) -- (chu_2) -- cycle;

\node[vertex,label=left:{$u_1$}] (u1_2) at (cu1_2) {};
\node[vertex,label=left:{$u_2$}] (u2_2) at (cu2_2) {} edge[very thick] (u1_2);
\node[vertex] (a1_2) at (ca1_2) {} edge[very thick] (u1_2) edge[very thick] (u2_2);
\node[vertex] (a2_2) at (ca2_2) {} edge[very thick] (a1_2) edge[very thick] (u1_2) edge[very thick] (u2_2);
\node[vertex] (al2_2) at (cal2_2) {} edge[very thick,dashed] (a2_2) edge[very thick] (u1_2) edge[very thick] (u2_2);
\node[vertex] (al1_2) at (cal1_2) {} edge[very thick] (al2_2) edge[very thick] (u1_2) edge[very thick] (u2_2);
\node[vertex,label=right:{$x_2$}] (x2_2) at (cx2_2) {} edge[very thick] (al1_2) edge[very thick] (u1_2) edge[very thick] (u2_2);

\node[vertex,label=right:{$v_1$}] (v1_2) at (cv1_2) {};
\node[vertex,label=left:{$u$}] (gu_2) at (cgu_2) {} edge[very thick] (v1_2);
\node[vertex,label=right:{$v_2$}] (v2_2) at (cv2_2) {} edge[very thick] (v1_2) edge[very thick] (gu_2);
\node[vertex,label=right:{$v_{k_{1}-1}$}] (vk11_2) at (cvk11_2) {} edge[very thick,dashed] (v2_2) edge[very thick] (gu_2);
\node[vertex,label=right:{$v_{k_{1}}$}] (vk1_2) at (cvk1_2) {} edge[very thick] (vk11_2) edge[very thick] (gu_2);

\node[vertex,label=right:{$v_1$}] (vv1_2) at (cvv1_2) {};
\node[vertex,label=left:{$u$}] (hu_2) at (chu_2) {} edge[very thick] (vv1_2);
\node[vertex,label=right:{$v_2$}] (vv2_2) at (cvv2_2) {} edge[very thick] (vv1_2) edge[very thick] (hu_2);
\node[vertex,label=right:{$v_{k_{2}-1}$}] (vvk11_2) at (cvvk11_2) {} edge[very thick,dashed] (vv2_2) edge[very thick] (hu_2);
\node[vertex,label=right:{$v_{k_{2}}$}] (vvk1_2) at (cvvk1_2) {} edge[very thick] (vvk11_2) edge[very thick] (hu_2);

\node at (+8,\stepthree) {$\mathcal{B}'$};

\coordinate (cu1_3) at (0,\stepthree+1);
\coordinate (cu2_3) at (0,\stepthree-1);
\coordinate (ca1_3) at (1,\stepthree);
\coordinate (ca2_3) at (2,\stepthree);
\coordinate (cal2_3) at (5,\stepthree);
\coordinate (cal1_3) at (6,\stepthree);
\coordinate (cx2_3) at (7,\stepthree);
\coordinate (cv2_3) at (7,\stepthree+1);
\coordinate (cvk11_3) at (7,\stepthree+2);
\coordinate (cvk1_3) at (7,\stepthree+3);
\coordinate (cvv2_3) at (7,\stepthree-1);
\coordinate (cvvk11_3) at (7,\stepthree-2);
\coordinate (cvvk1_3) at (7,\stepthree-3);

\fill[blue!25!white] (cu2_3) -- (cu1_3) -- (cvk1_3) -- (cvvk1_3) -- cycle;

\node[vertex,label=left:{$u_1$}] (u1_3) at (cu1_3) {};
\node[vertex,label=left:{$u_2$}] (u2_3) at (cu2_3) {} edge[very thick] (u1_3);
\node[vertex] (a1_3) at (ca1_3) {} edge[very thick] (u1_3) edge[very thick] (u2_3);
\node[vertex] (a2_3) at (ca2_3) {} edge[very thick] (a1_3) edge[very thick] (u1_3) edge[very thick] (u2_3);
\node[vertex] (al2_3) at (cal2_3) {} edge[very thick,dashed] (a2_3) edge[very thick] (u1_3) edge[very thick] (u2_3);
\node[vertex] (al1_3) at (cal1_3) {} edge[very thick] (al2_3) edge[very thick] (u1_3) edge[very thick] (u2_3);
\node[vertex,label=right:{$x_2$}] (x2_3) at (cx2_3) {} edge[very thick] (al1_3) edge[very thick] (u1_3) edge[very thick] (u2_3);

\node[vertex,label=right:{$v_2$}] (v2_3) at (cv2_3) {} edge[very thick] (x2_3) edge[very thick] (u1_3);
\node[vertex,label=right:{$v_{k_{1}-1}$}] (vk11_3) at (cvk11_3) {} edge[very thick,dashed] (v2_3) edge[very thick] (u1_3);
\node[vertex,label=right:{$x'_1$}] (vk1_3) at (cvk1_3) {} edge[very thick] (vk11_3) edge[very thick] (u1_3);

\node[vertex,label=right:{$v_2$}] (vv2_3) at (cvv2_3) {} edge[very thick] (x2_3) edge[very thick] (u2_3);
\node[vertex,label=right:{$v_{k_{2}-1}$}] (vvk11_3) at (cvvk11_3) {} edge[very thick,dashed] (vv2_3) edge[very thick] (u2_3);
\node[vertex,label=right:{$x'_2$}] (vvk1_3) at (cvvk1_3) {} edge[very thick] (vvk11_3) edge[very thick] (u2_3);

\node (F1p) at ($(cu1_3)!0.5!(cvk11_3)!0.33!(cvk1_3)$) {$F'_1$};
\node (F2p) at ($(cu2_3)!0.5!(cvvk11_3)!0.33!(cvvk1_3)$) {$F'_2$};

\end{scope}

\end{tikzpicture}}
\end{center}
 \caption{A~$2$-dimensional illustration of the example constructed in Theorem~\ref{thm:exampleBall}}
 \label{fig:exampleBall}
\end{figure}

Figure~\ref{fig:exampleBall} illustrates the kind of objects that are produced by the previous proof.

\subsection{A Hirsch polytope with exponential monotone conservative paths}

We now show a second construction which achieves essentially the same long monotone and conservative paths but in a simplicial complex that is a \emph{polytopal sphere}. The construction is similar in spirit to the previous one, based on the use of one-point-suspensions, but a bit more complicated since in a sphere we cannot have the vertices contained in a unique facet that were instrumental in the previous proof. 

In the inductive step, instead of gluing \emph{stacks} to the one-point-suspension of the previously constructed sphere, we do stellar subdivisions at edges. Let us first highlight some properties of these stellar subdivisions that will be used in the construction:

Let $ab$ be an edge in a simplicial complex $C$, and let $C'$ be the complex obtained by a stellar subdivision of the edge $ab$. Let $v$ be the new vertex.
\begin{itemize}
\item Let $c$ be a vertex of $C$ other than $a$ and $b$ and assume that $abc$ is not a triangle. Then, $\lk_{C'}(v)=\lk_{C}(v)$.
\item The same is true for the links at $a$ (resp.~$b$) except the role of $b$ (resp.~$a$) in it is now played by $c$.
\item The degree of $v$ in the 1-skeleton of $C'$ is $2$ plus the number of triangles containing $ab$ in~$C$, and is bounded above by $\min\{\deg_C(a), \deg_C(b)\}$.
\end{itemize}

In particular, the proof makes use of the following idea. Let $a$ be a vertex in a simplicial complex~$C$ and let $C'$ be obtained from $C$ by stellar subdivision of all the edges $ab$ containing $a$ (in any order). Then, for every vertex $v\ne a$ of $C$ we have that
\[
\vdist_{C'}(a,v) = \vdist_{C}(a,v) + 1.
\]

We are now ready to state and prove the lemma that gives the induction step:

\begin{lemma}
\label{lemma:exampleSphere}
Let $C$ be a $(d-1)$-complex on $N$ vertices. If $C$ has two facets $F_1$ and $F_2$ and vertices $x_i\in F_i$ ($i=1,2$) such that:
\begin{enumerate}
\item Every monotone conservative path from $F_i$ towards $\{x_j\}$ (and ending in a facet containing $\{x_j\}$) ends precisely in $F_j$ and has length at least $L$, for $(i,j)\in\{(1,2),(2,1)\}$.
\item Every monotone conservative path from $F_i$ towards $F_j$ (and ending in $F_j$) has length at least $L$, for $(i,j)\in\{(1,2),(2,1)\}$.
\item $\vdist_C(x_1,x_2)\ge 3$.
\item The degrees of $x_1$ and $x_2$ in (the 1-skeleton of) $C$ are $2d-2$.
\end{enumerate}

Then, there exists a $d$-complex $C'$ on $N':= N+ 4d+1$ vertices having the same properties (1) to (4) with $L':=2L$ and $d'=d+1$, and with respect to facets $F'_i$ and vertices $x'_i\in F'_i$ ($i=1,2$).
\end{lemma}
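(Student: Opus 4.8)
\emph{Overview and construction.} The plan is to obtain $C'$ from $C$ by a one-point suspension followed by a sequence of $4d$ stellar subdivisions of edges, and to transport properties (1)--(4) through these two operations using the \emph{local} description of monotone conservative paths (anchors and vectors of distances) from Section~\ref{sec:combinatorialsegments}. First I would set $\hat C\eqdef\ops_C(x_1)$, with suspension vertices $u_1,u_2$. This is a $d$-complex on $N+1$ vertices which is a polytopal sphere (resp.\ vertex-decomposable) whenever $C$ is; it comes with the identifications $\lk_{\hat C}(u_r)\cong C$ for $r\in\{1,2\}$ (with $x_1$ played by $u_{3-r}$) and $\lk_{\hat C}(u_1u_2)=\lk_C(x_1)$. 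Since $\vdist_C(x_1,x_2)\ge 3$ we have $x_1\notin F_2$, so $F_2*u_1$, $F_2*u_2$ and the ``bridge'' facet $\ops_{F_1}(x_1)=(F_1\ssm x_1)*\{u_1,u_2\}$ are facets of $\hat C$, and $\deg_{\hat C}(x_2)=\deg_C(x_2)+2=2d$. Then $C'$ is obtained from $\hat C$ by a suitably chosen sequence of $2d$ stellar subdivisions of edges of the form $\{u_1,\text{current tip}\}$ that grows a chain inside $\st(u_1)$ ending in a new vertex $x_1'$, together with a facet $F_1'\ni x_1'$, and symmetrically $2d$ subdivisions inside $\st(u_2)$ producing $x_2'$ and $F_2'$, where $F_i'$ corresponds, inside $\lk_{C'}(u_i)\cong C$, to one of the two distinguished facets $F_1,F_2$ of $C$. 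The point of ``suitably chosen'' is that each such subdivision only \emph{relabels}, and never genuinely refines, the copy of $C$ seen inside $\st(u_r)$ (subdividing $\{u_r,y\}$ relabels the vertex $y$ of that copy), so the links $\lk_{C'}(u_r)\cong C$ persist; that the last subdivided edge on each side lies in exactly the facets coming from facets of $C$ through $x_2$, of which there are $\deg_C(x_2)=2d-2$ triangles, so the new tip has degree $2+(2d-2)=2d$; and that the chains are grown off images of \emph{different} vertices of $C$ so that the two tips end up with disjoint neighbourhoods.

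\emph{Easy properties.} By Lemma~\ref{lem:ops-vertex-decomposable} a one-point suspension and a stellar subdivision at a face both preserve vertex-decomposability (and, by the same reasoning, shellability and, with a pulling argument, polytopality), so $C'$ inherits these from $C$; this gives the extra clauses used in Theorem~\ref{thm:exampleSphere} and its remark. Property (4) is item (ii) above. For property (3): by construction $x_1'$ is surrounded by a relabelled copy of $\lk_C(x_2)$ and $x_2'$ by a relabelled copy of $\lk_C(x_1)$, together with one chain neighbour each; since $\vdist_C(x_1,x_2)\ge 3$ forces $V(\lk_C(x_1))\cap V(\lk_C(x_2))=\emptyset$, and the chain neighbours and the two suspension vertices are all distinct, $x_1'$ and $x_2'$ have no common neighbour in $C'$, whence $\vdist_{C'}(x_1',x_2')\ge 3$.

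\emph{Properties (1) and (2): the crux.} Let $\Gamma$ be a monotone conservative path in $C'$ from $F_1'$ towards a target $S$ with $\{x_2'\}\subseteq S\subseteq F_2'$, ending in a facet meeting $S$ (the reverse direction and the case $S=F_2'$ are symmetric). I would study the anchor sequence of $\Gamma$, recalling that it is a shortest vertex-path towards $S$ and that, by the discussion after Theorem~\ref{thm:step-existence}, each maximal subpath of $\Gamma$ with a fixed anchor $x$, once $x$ is deleted, becomes a monotone conservative path in $\lk_{C'}(x)$. The heart of the argument is to show that $\Gamma$ is forced to use both $u_1$ and $u_2$ as anchors and, in between, to pass through the bridge facet $\ops_{F_1}(x_1)$ where the anchor switches from $u_1$ to $u_2$ --- using that the only links among the vertices of $C'$ that can carry a long monotone conservative path are $\lk_{C'}(u_1)\cong C$ and $\lk_{C'}(u_2)\cong C$, while the separation behind property (3) rules out shortcuts. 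Deleting $u_1$ from the $u_1$-anchored subpath then yields a monotone conservative path inside $\lk_{C'}(u_1)\cong C$ running between the copies of $F_1$ and $F_2$ with induced target set essentially the copy of $\{x_1\}$ (or $\{x_2\}$) --- precisely the situation controlled by property (1) for $C$ --- hence of length $\ge L$ and ending at the prescribed copy, i.e.\ at $\ops_{F_1}(x_1)$; symmetrically the $u_2$-anchored subpath has length $\ge L$ and ends at $F_2'$. Since facets with a common anchor form a subpath, these two subpaths are disjoint, so $\operatorname{length}(\Gamma)\ge L+L=2L=L'$ and $\Gamma$ ends in $F_2'$, which proves both clauses of (1) (and (2)).

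\emph{Main obstacle.} The two delicate points are, first, the anchor analysis of the previous paragraph: one must prove that the one-point-suspension structure survives the $4d$ subdivisions as a genuine bottleneck that \emph{forces} $\Gamma$ through $\st(u_1)$ and then $\st(u_2)$ (in particular ruling out shortcuts through the many neighbours of the high-degree vertices $u_1,u_2$), and that the reduced paths inside $\lk_{C'}(u_1)$ and $\lk_{C'}(u_2)$ are honestly of the form to which hypotheses (1)/(2) for $C$ apply verbatim --- matching the induced target sets to $\{x_1\}$, resp.\ $\{x_2\}$, is the subtle part here. Second, one must pin down the exact sequence of edge subdivisions (which edges, in which order) so that the relabelling property, the degree count, and the disjointness of the tip neighbourhoods all hold simultaneously; the distance bound $\vdist_{C'}(x_1',x_2')\ge 3$ is the most sensitive point, and it is exactly where the hypothesis $\vdist_C(x_1,x_2)\ge 3$ is used.
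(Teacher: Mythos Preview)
Your high-level strategy---one-point suspension followed by $4d$ stellar subdivisions of edges incident to the suspension vertices, then analysing the anchor sequence via the persisting isomorphisms $\lk_{C'}(u_r)\cong C$---is exactly what the paper does. The anchor argument you sketch (the path is forced through $u_1$, then $u_2$, the piece anchored at each $u_r$ projects to a monotone conservative path in $\lk_{C'}(u_r)\cong C$ from a copy of $F_2$ towards the image of $x_1$, and hypothesis~(1) for $C$ gives length $\ge L$ and the correct endpoint) is also the paper's argument.

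The gap is in the construction itself, precisely at the point you flag as ``most sensitive''. You want $x'_1$ to sit in a relabelled copy of $\lk_C(x_2)$ and $x'_2$ in a relabelled copy of $\lk_C(x_1)$, so that property~(3) follows from $V(\lk_C(x_1))\cap V(\lk_C(x_2))=\emptyset$. But in $\hat C=\ops_C(x_1)$ the role of $x_1$ inside $\lk_{\hat C}(u_2)\cong C$ is played by $u_1$; hence ``growing the $u_2$-chain off the image of $x_1$'' means your very first subdivision on that side is of the edge $u_1u_2$. Once $u_1u_2$ is subdivided, the bridge facet $\ops_{F_1}(x_1)=(F_1\ssm x_1)\cup\{u_1,u_2\}$ that you explicitly invoke ceases to exist, $u_1$ and $u_2$ are no longer adjacent, and your anchor sequence can no longer be $(u_1,u_2,\dots)$---so the clean two-block decomposition of $\Gamma$ breaks down. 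In short, your proposed mechanism for property~(3) is incompatible with your own anchor analysis.

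The paper resolves this differently: \emph{both} $x'_1$ and $x'_2$ play the role of $x_2$ (in $\lk_{C'}(u)$ and $\lk_{C'}(v)$ respectively), and the edge $uv$ is never touched. Disjointness of the neighbourhoods of $x'_1$ and $x'_2$ is achieved not by using different base vertices but by \emph{first} subdividing all $2(2d-2)$ edges $uy_i$ and $vy_i$ for $y_i$ a neighbour of $x_2$; this relabels those neighbours to distinct new vertices $u_i$ and $v_i$ on each side. Only then are $ux_2$, $vx_2$ and finally $uu'$, $vv'$ subdivided to create $x'_1$, $x'_2$. With this construction the paper can show that $(x'_1,u,v,x'_2)$ is the \emph{unique} shortest vertex-path between $x'_1$ and $x'_2$ (because no vertex of $C$ is now a neighbour of $x'_1$ or $x'_2$), which is what pins down the anchor sequence and makes the two applications of hypothesis~(1) go through exactly as you describe.
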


\begin{proof}
Consider first the complex $\ops_C(x_1)$ and let $u$ and $v$ be the suspension vertices. In it do the following stellar subdivisions:
\begin{itemize}
\item Let $\{y_1,\dots,y_{2d-2}\}$ be the neighbor vertices of $x_2$ in $C$. For each of them, subdivide the edges $uy_i$ and $vy_i$. Call $u_i$ and $v_i$ the new vertices introduced.
\item Then subdivide the edges $ux_2$ and $vx_2$. Call $u'$ and $v'$ the new vertices.
\item Finally subdivide the edges $uu'$ and  $vv'$ and call the new vertices $x'_1$ and $x'_2$. 
\end{itemize}
Let $C'$ be the final complex so obtained. Observe that the number of vertices has increased by one via the one-point-suspension, then by $4d-4$ (vertices $u_i$ and $v_i$, $i=1,\dots, 2d-2$), then, finally by another four ($u'$, $v'$, $x'_1$ and $x'_2$). In particular, the new complex indeed has $N':=N+4d+1$ vertices.

Observe that the links of $u$ and $v$ do not change by the subdivisions, except some of the new vertices introduced play the role of vertices of $C$. 
For example, in $\lk_{C'}(u)$ the role of $x_1$ and $x_2$ is played by $x'_1$ and $v$, respectively, and the role of each neighbor $y$ of $x_1$ is played by the new vertex produced by subdividing $uy$. 
Here we are using the assumption $\vdist(x_1,x_2)\ge 3$. If this didn't hold, some of the $y_i$'s would form a triangle with $u$ and $v$, in which case the subdivision of $uy_i$ would change the link at $v$, and vice-versa.

We let $F'_1$ consist of $u$ together with the vertices of $\lk_{C'}(u)$ corresponding to $F_2$ and let $F'_2$ consist of $v$ together with the vertices of $\lk_{C'}(v)$ corresponding to $F_2$. Put differently:
\[
F'_1:=\{u,x'_1\}\cup \{y'_i: y_i \in F_2\},
\qquad
F'_2:=\{v,x'_2\}\cup \{y'_i: y_i \in F_2\}.
\]
Let us show some properties of this construction, among which are the claimed properties (1) to (4).

\begin{itemize}
\item Let us look at the degree of $x'_1$ (and~$x_2$ symmetrically). Observe that the number $2d-2$ of neighbors of $x_2$ in $C$ equals the number of triangles containing $ux_2$ in $\ops_C(x_1)$. This number does not change by the subdivision of the edges $uy_i$, and it becomes the number of triangles containing $uu'$ when we subdivide $ux_2$. So, when we create the vertex $x'_1$ by subdividing $uu'$, the degree of the new vertex $x'_1$ equals 2 plus that number, which shows property (4).

\item The path $(x'_1,u,v,x'_2)$ is the unique shortest path from $x'_1$ to $x'_2$. Indeed, observe that any other path from $x'_1$ to $x'_2$ (or from a vertex on the ``$u$ side'' to a vertex on the ``$v$ side'' of the construction, for that matter) needs to pass through a vertex of $C$. The claim then follows by noticing that no vertex of $C$ is a neighbor of neither $x'_1$ nor $x'_2$, so all other paths between them have length at least four. This implies part (3), but it also shows that:
\begin{itemize}
\item $(u,v)$ is the unique shortest vertex-path between $F'_1$ and $F'_2$.
\item $(u,v,x'_2)$ is the unique shortest vertex-path between $F'_1$ and $x'_2$.
\item $(x'_1,u,v)$ is the unique shortest vertex-path between $x'_1$ and $F'_2$.
\end{itemize}

\item Then every monotone conservative path from $F'_1$ to $x'_2$ has $(u,v,x'_2)$ as its sequence of anchors. While we are in the first anchor $u$ the path is a monotone conservative path along $\lk_{C'}(u)$ towards the vertex set $S':=\{v\}$, which is the same as a monotone conservative path from $F_2$ to $x_1$ in $C$. By hypothesis (1), this path has length at least $L$ and finishes in the facet $\ops_C(F_1)$. The part anchored at $v$ is, by the same arguments, a monotone conservative path from $\ops_C(F_1)$ to $x'_2$, which finishes in $F'_2$ and has length also at least~$L$ by the same hypothesis. This proves (1) for $C'$.

\item Similarly, every monotone conservative path from $F'_1$ towards $F'_2$ has $(u,v)$ as its first two anchors. While we are anchored at~$u$ the path is a monotone conservative path along $\lk_{C'}(u)$ towards the vertex set $S':=\{v\}$, which is the same as a monotone conservative path from $F_2$ to $x_1$ in $C$. By hypothesis (1), this path has length at least $L$ and finishes in the facet $\ops_C(F_1)$. The rest of the path (the part anchored at $v$, and what comes later, since $v$ belongs to our target set $S$)  is a monotone conservative path from $\ops_C(F_1)$ to $F'_2$ and has length also at least $L$ by hypothesis (2). This proves (2) for $C'$.\qedhere
\end{itemize}
\end{proof}

\begin{theorem}
\label{thm:exampleSphere}
For every $d\ge 2$ and every $N\ge 4$, there is a vertex-decomposable polytopal $(d-1)$-sphere $\sphere_d$ with $N+\Theta(d^2)$ vertices and two facets $F_1$ and $F_2$ in it such that every  monotone conservative path between them has length at least $2^{d-3} N$.
\end{theorem}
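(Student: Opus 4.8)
The plan is to follow the pattern of the proof of Theorem~\ref{thm:exampleBall}: fix a suitable one-dimensional base complex and apply Lemma~\ref{lemma:exampleSphere} exactly $d-2$ times, while carrying vertex-decomposability and polytopality along inductively (neither is part of the statement of Lemma~\ref{lemma:exampleSphere}, so they must be verified separately). For the base case I would take $\sphere_2$ to be the boundary of a convex $m'$-gon, with $m'=N+4$ if $N$ is even and $m'=N+5$ if $N$ is odd; this is a vertex-decomposable polytopal $1$-sphere. Writing its vertices cyclically as $v_0,v_1,\dots,v_{m'-1}$, I would set $F_1=\{v_1,v_2\}$, $x_1=v_1$, and, with $m=\lfloor m'/2\rfloor$, $F_2=\{v_{m-1},v_m\}$, $x_2=v_m$, so that $L_2:=m-2\ge\tfrac{N}{2}$ and $\vdist_{\sphere_2}(x_1,x_2)=m-1\ge 3$ for every $N\ge 4$.

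The first substantive step is to check that $\sphere_2$ with these data satisfies hypotheses (1)--(4) of Lemma~\ref{lemma:exampleSphere} for ``$d=2$''. Hypothesis (4) is automatic, since every vertex of a cycle has degree $2=2\cdot 2-2$, and hypothesis (3) holds by construction. For (1) and (2), recall (from the proof of Theorem~\ref{thm:upperBound}) that in dimension $1$ a monotone conservative path is exactly a shortest path in the dual graph, and that its anchors form a shortest vertex-path towards the target. With the choice $m=\lfloor m'/2\rfloor$ the forward arc from $v_2$ to $v_m$ is the unique shortest one, so the anchor of $F_1$ towards $\{x_2\}$ must be $v_2$ and the monotone conservative path from $F_1$ towards $\{x_2\}$ is forced to run $\{v_1,v_2\},\{v_2,v_3\},\dots,\{v_{m-1},v_m\}$; in particular it ends \emph{precisely} at $F_2$ and has length $m-2=L_2$. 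By the reflection of the cycle exchanging $v_i$ with $v_{m+1-i}$ (which swaps $(F_1,x_1)$ with $(F_2,x_2)$), the analogous path from $F_2$ towards $\{x_1\}$ is forced, ends at $F_1$, and has length $L_2$; this gives (1). Hypothesis (2) uses the same arcs, with one extra step appended once the anchor reaches $x_j\in F_j$ (by Corollary~\ref{coro:path-existence-2}), so those paths also have length $L_2\ge\tfrac{N}{2}$. The one delicate point here is that obtaining ``ends precisely in $F_j$'' simultaneously on both sides is what forces $F_1$ to be $\{v_1,v_2\}$ rather than the more symmetric-looking $\{v_0,v_1\}$.

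Granting the base case, I would apply Lemma~\ref{lemma:exampleSphere} repeatedly to produce $\sphere_2,\sphere_3,\dots,\sphere_d$, where $\sphere_{j+1}$ is a $j$-complex satisfying (1)--(4) with length parameter $L_{j+1}=2L_j$ and with exactly $4j+1$ more vertices than $\sphere_j$. Hence $L_d=2^{d-2}L_2\ge 2^{d-2}\cdot\tfrac{N}{2}=2^{d-3}N$, which is hypothesis (2) for $\sphere_d$ and therefore bounds from below the length of every monotone conservative path between $F_1$ and $F_2$ in $\sphere_d$ (in either direction, since the hypotheses are symmetric). The vertex count is $|\sphere_2|+\sum_{j=2}^{d-1}(4j+1)=N+\Theta(1)+\Theta(d^2)=N+\Theta(d^2)$, as required.

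It remains to verify that each $\sphere_j$ is a vertex-decomposable polytopal sphere. In Lemma~\ref{lemma:exampleSphere}, $\sphere_{j+1}$ is obtained from $\sphere_j$ by one one-point-suspension followed by finitely many stellar subdivisions of edges; by Lemma~\ref{lem:ops-vertex-decomposable}, one-point-suspensions and stellar subdivisions of faces both preserve vertex-decomposability, so vertex-decomposability propagates from the polygon $\sphere_2$ to $\sphere_d$. For polytopality I would invoke the classical facts that the one-point-suspension of the boundary of a simplicial polytope is again such a boundary (it is a wedge), and that the stellar subdivision of a face of the boundary of a simplicial polytope is again such a boundary (realized by adjoining a vertex placed just beyond that face, in the relative interior of its outer normal cone); starting from the boundary of a polygon, this yields that $\sphere_d$ is polytopal. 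I expect the main obstacles to be exactly the two bookkeeping points flagged above: making the forced monotone conservative paths in the base cycle land on the designated facets while keeping $|\sphere_2|=N+O(1)$ and $L_2\ge\tfrac{N}{2}$, and either spelling out or cleanly citing the polytopality of the iterated edge stellar subdivisions.
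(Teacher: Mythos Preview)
Your proposal is correct and follows essentially the same route as the paper: start from a cycle, iterate Lemma~\ref{lemma:exampleSphere} $d-2$ times, and carry vertex-decomposability and polytopality along via Lemma~\ref{lem:ops-vertex-decomposable} and the standard geometric realization of a stellar subdivision. The paper's own proof is terser---it simply takes $\sphere_2$ to be a cycle on $N$ vertices without spelling out the choice of $F_i$, $x_i$, or the verification of hypotheses (1)--(4)---whereas you make all of this explicit and inflate the base cycle by a constant to make the arithmetic land exactly on $L_2\ge N/2$; that constant is harmlessly absorbed into the $\Theta(d^2)$ term.

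One minor inaccuracy worth cleaning up: in your verification of hypothesis~(2) you describe the monotone conservative path from $F_1$ towards $F_2$ as the path from (1) ``with one extra step appended once the anchor reaches $x_j$''. In fact, with your choices the two paths coincide facet-for-facet (both are $\{v_1,v_2\},\{v_2,v_3\},\dots,\{v_{m-1},v_m\}$, of length $m-2$); the path towards the set $F_2$ first meets $F_2$ at $\{v_{m-2},v_{m-1}\}$ and then takes one more step to $F_2$, but this does not make it longer than the path towards $\{x_2\}$. This does not affect your bound $L_2\ge N/2$, so the argument stands. Your worry that $F_1=\{v_1,v_2\}$ rather than $\{v_0,v_1\}$ is ``forced'' is also unnecessary---either choice works once $m'$ is even---but your choice is certainly valid.
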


See Figure~\ref{fig:vertexDecomposablePolytope} for a~$2$-sphere example.

\begin{proof}
Let $\sphere_2$ be a cycle with $N$ vertices and iterate on it the process of Lemma~\ref{lemma:exampleSphere}.
Observe that the construction in the lemma preserves polytopality and vertex-decomposability, since both one-point-suspension and stellar subdivision preserve them.
For vertex-decomposability this follows from Lemma~\ref{lem:ops-vertex-decomposable}.
For polytopality, it suffices to add a vertex outside the polytope close enough to the barycenter of the face which is stellar-subdivided to realize the stellar subdivision.
\end{proof}

\begin{figure}
 \centerline{\input{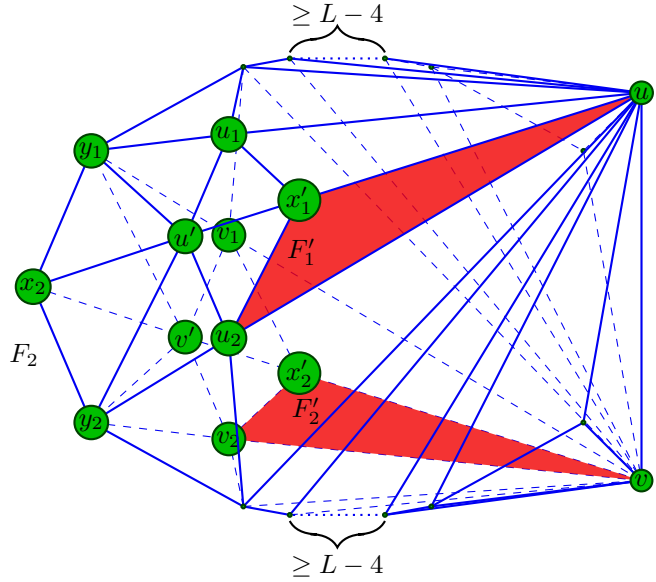}}
 \caption{A~$2$-sphere illustration of the example constructed in Theorem~\ref{thm:exampleSphere}}
 \label{fig:vertexDecomposablePolytope}
\end{figure}

\begin{remark}
 \begin{itemize}
  \item Observe that, for fixed $d$, the bounds of Theorems~\ref{thm:exampleBall} and~\ref{thm:exampleSphere} are both within a $1+\varepsilon$ ratio of the upper bounds of Theorem~\ref{thm:BarnetteLarman}, with $\varepsilon$ going to zero as $N$ goes to infinity.  
  \item The fact that the polytopes constructed in Theorem~\ref{thm:exampleSphere} are vertex-decomposable implies that they satisfy the Hirsch bound~\cite{ProvanBillera}, and it also dramatically illustrates the need of flagness for the proof of Adiprasito and Benedetti~\cite{AdiprasitoBenedetti} to work.
 \end{itemize}
\end{remark}

\bibliographystyle{alpha}
\bibliography{bibliography}

\end{document}